\theoremstyle{theorem}
\newtheorem{theorem}{Theorem}
\newtheorem{lemma}[theorem]{Lemma}
\newtheorem{proposition}[theorem]{Proposition}
\newtheorem{definition}[theorem]{Definition}
\newtheorem{myAlgo}{Algorithm}
\newtheorem{projProc}{Procedure}
\newcommand{\MSSIM}{\ensuremath{\operatorname{MSSIM}}}
\renewcommand{\argmin}{\operatorname*{argmin}}
\title{Iterative Potts minimization for the recovery of signals with discontinuities from indirect measurements -- the multivariate case}
\author{
	Lukas Kiefer, Martin Storath, Andreas Weinmann
}
\begin{document}
\maketitle

\newlength\figureheight
\newlength\figurewidth
\setlength\figureheight{0.15\textwidth}

\begin{abstract}
	Signals and images with discontinuities appear in many problems in such diverse areas as
	biology, medicine, mechanics, and electrical engineering. 
	The concrete data are often discrete, indirect and noisy measurements
	of some quantities describing the signal under consideration. 
	A frequent task is to find the segments of the signal or image which corresponds to finding the discontinuities or jumps in the data.
	Methods based on minimizing the piecewise constant Mumford-Shah functional 
	--whose discretized version is known as Potts energy-- 
	are advantageous in this scenario, in particular, in connection with segmentation.	
	However, due to their non-convexity, minimization of such energies is challenging.
	In this paper we propose a new iterative minimization strategy for
	the multivariate Potts energy dealing with indirect, noisy measurements.
	We provide a convergence analysis and underpin our findings with numerical experiments.  
\end{abstract}
\maketitle

\vspace{2ex} {\it Keywords}: Piecewise-constant Mumford-Shah model, Potts model, majorization-minimi\-zation methods, image segmentation,  joint reconstruction and segmentation, ill-posed inverse problems, Radon transform,  deconvolution.

\vspace{0.5ex} {\it AMS subject classifications.}
94A08, 68U10, 65D18, 
65K10,
90C26, 90C39.

\section{Introduction}

Problems involving reconstruction tasks for functions with discontinuities appear in various biological and medical applications.
Examples are
the steps in the rotation of the bacterial flagella motor \cite{sowa2005direct, sowa2008bacterial,nord2017catch},
the cross-hybridization of DNA \cite{snijders2001assembly,drobyshev2003specificity,hupe2004analysis}, 
x-ray tomography \cite{ramlau2007mumford},
electron tomography \cite{klann2011mumfordElectron} and SPECT \cite{klann2011mumford,wolf2013few}. 
An engineering example is crack detection in brittle material in mechanics \cite{artina2013linearly}. 
Further examples may for instance be found in the papers 
\cite{little2011generalized,little2011generalized2,chartrand2009fast, chan1993image, frick2014multiscale} and the references therein.
In general, signals with discontinuities appear in many applied problems.
A central task is to restore the jumps, edges, change points or segments of the signals or images from the observed data.
These observed data are usually indirectly measured. Furthermore, they consist of measurements on a discretized grid and are typically corrupted by noise.

In many scenarios, nonconvex nonsmooth variational methods are a suitable choice 
for the partitioning task, i.e., the task of finding the jumps/edges/change points;
{see for example \cite{little2011generalized,nord2017catch,boykov2001fast}.
In particular, methods based on piecewise constant Mumford-Shah functionals \cite{mumford1985boundary,mumford1989optimal} have been used in various different applications.
The piecewise-constant Mumford-Shah model also appears in statistics and image processing 
where it is often called \emph{Potts model} \cite{geman1984stochastic,boykov2001fast, boysen2009jump, boysen2009consistencies,pock2009convex,winkler2003image};
this is a tribute to Renfrey B. Potts and his work in statistical mechanics \cite{potts1952some}.
The variational formulation of the piecewise-constant Mumford-Shah/Potts model (with an indirect measurement term) is given by 
\begin{equation}\label{eq:pottsGeneralA}
\textstyle\argmin_u \ \gamma \, \| \nabla u\|_{0} + \norm{A u - f}_{2}^2.
\end{equation}
Here, $A$ is a linear operator modeling the measurement process, e.g.,~the Radon transform in computed tomography (CT), or the point-spread function of the microscope in  microscopy. 
Further,  $f$ is an element of the data space, e.g.,~a sinogram or part of it in CT, or the blurred microscopy image in microscopy.
The mathematically precise definition of the jump term $\| \nabla u \|_{0}$ in the general situation is rather technical.
However, if $u$ is piecewise-constant and the discontinuity set of $u$ is sufficiently regular, say, a union of $C^1$ curves, then $\| \nabla u\|_{0}$ is just the total arc length of this union.
In general, the gradient $\nabla u$ is given in the distributional sense and the boundary length is expressed in terms of the \mbox{$(d-1)$-dimensional} Hausdorff measure.
When $u$ is not piecewise-constant, the jump penalty is infinite \cite{ramlau2010regularization}.
The second term measures the fidelity of a solution $u$ to the data $f.$ 
The parameter $\gamma > 0$ controls the balance between data fidelity and jump penalty.
(A wider class of Mumford-Shah models can be obtained by replacing the 
squared $L^2$ distance by more general data terms such as other norm-based expressions or divergences.)

The piecewise-constant Mumford-Shah/Potts model can be interpreted in two ways.
On the one hand, if the imaged object is (approximately) piecewise-constant, 
then the solution is an (approximate) reconstruction of the imaged object.
On the other hand, since  a piecewise-constant solution 
directly induces a partitioning of the image domain,
it can be seen as joint reconstruction and segmentation.
Executing reconstruction and segmentation jointly typically leads to better results
than performing the two steps successively \cite{klann2011mumford,ramlau2007mumford,ramlau2010regularization,storath2015joint}.
We note that in order to deal with the discrete data, the energy functional is typically discretized; see Section~\ref{sec:Discretization}. Some references concerning Mumford-Shah functionals are \cite{ambrosio1990approximation,chambolle1995image,ramlau2010regularization,jiang2014regularizing, blake1987visual,fornasier2010iterative,nikolova2000thresholding} and also the references therein;
see also the book \cite{ambrosio2000functions}.
The piecewise constant Mumford-Shah functionals are among the maybe most well-known representatives of the class of free-discontinuity problems introduced by De Giorgi \cite{de1991free}.

The analysis of the nonsmooth and nonconvex problem \eqref{eq:pottsGeneralA} is rather involved. 
We discuss some analytic aspects.
We first note that without additional assumptions the existence of minimizers of \eqref{eq:pottsGeneralA} is not guaranteed in a continuous domain setting
\cite{fornasier2010iterative,fornasier2013existence, ramlau2010regularization, storath2013jump}.
To ensure the existence of minimizers,
additional penalty terms such as an $L^p$ ($1<p<\infty$) term of the form 
$\|u\|_p^p$ \cite{ramlau2007mumford, ramlau2010regularization}
or pointwise boundedness constraints \cite{jiang2014regularizing} have been considered.
We note that the existence of minimizers is guaranteed in the discrete domain setup for typical discretizations \cite{fornasier2010iterative, storath2013jump}.
Another important topic is to verify that
the Potts model is a regularization method in the sense of inverse problems.
The first work dealing with this task is \cite{ramlau2010regularization}: 
The authors assume that the solution space consists of non-degenerate piecewise-constant functions with at most $k$ (arbitrary, but fixed) different values 
which are additionally bounded. Under relatively mild assumptions on the operator $A$, they show stability.  Further, by giving a suitable parameter choice rule, they show that the method is a regularizer in the sense of inverse problems.
Related references are \cite{klann2013regularization,jiang2014regularizing}
with the latter including (non-piecewise-constant) Mumford-Shah functionals.
We note that Mumford-Shah approaches (including the piecewise constant Mumford-Shah variant) 
also regularize the boundaries of the discontinuity set of the underlying signal \cite{jiang2014regularizing}.

Solving the Potts problem is algorithmically challenging. 
For $A = \mathrm{id},$ it is NP-hard for multivariate domains \cite{veksler1999efficient, boykov2001fast}, 	and, for general linear operators $A,$ it is even NP-hard for univariate signals \cite{storath2013jump}.
Thus,  finding a global minimizer within reasonable time seems to be unrealistic in general.
Nevertheless, due to its importance,
many approximative strategies for multivariate Potts problems with $A = \mathrm{id}$ have been proposed.
(We note that the case $A = \mathrm{id}$ is important as well since it captures the partitioning problem in image processing.)
For the Potts problem with general $A$ there are still some but not that many existing approaches, 
in particular in the multivariate situation.
For a more detailed discussion, we refer to the paragraph on algorithms for piecewise constant Mumford-Shah problems below.
A further discussion of methods for reconstructing piecewise constant signals may be found in  \cite{little2011generalized2}.
In \cite{weinmann2015iterative}, we have considered the univariate Potts problem for a general operator $A$ and have proposed a majorization-minimization strategy which we called iterative Potts minimization in analogy to iterative thresholding schemes. In this work, we will develop iterative Potts minimization schemes for the more demanding multivariate situation which is important for multivariate applications as appearing in imaging problems.

\paragraph{Existing algorithmic approaches to the piecewise constant Mumford-Shah problem and related problems.} 
We start to consider the Potts problem for general operator $A.$
In \cite{bar2004variational}, Bar et al.~consider an Ambrosio-Tortorelli-type approximation. 
Kim et al. use a level-set based active contour method 
for deconvolution in \cite{kim2002curve}.
Ramlau and Ring \cite{ramlau2007mumford} employ a related level-set approach 
for the joint reconstruction and segmentation of x-ray tomographic images;
further applications are electron tomography \cite{klann2011mumfordElectron} and SPECT \cite{klann2011mumford}. 
The authors of the present paper have proposed a strategy based on the alternating methods of multipliers in \cite{storath2013jump} for the univariate case and in \cite{storath2015joint} for the multivariate case. 

Fornasier and Ward \cite{fornasier2010iterative} rewrite Mumford-Shah problems 
as a pointwise penalized problem 
and derive generalized iterative thresholding algorithms for the rewritten problems in the univariate situation.
Further, they show that their method converges to a local minimizer in the univariate case.
Their approach principally carries over to the piecewise constant Mumford-Shah functional as explained 
in \cite{storath2013jump,weinmann2015iterative} and then results in a $\ell^0$ sparsity problem.
In the univariate situation, this NP-hard optimization problem is unconstrained and may 
be addressed by iterative hard thresholding algorithms for $\ell^0$ penalizations, analyzed by Blumensath and Davies in \cite{blumensath2008iterative,blumensath2009iterative}. 
(Note that related algorithms based on iterative soft thresholding for $\ell^1$ penalized problems have been considered by Daubechies, Defrise, and De Mol in \cite{daubechies2004iterative}.)
Artina et al. \cite{artina2013linearly} in particular consider the multivariate discrete Mumford-Shah model using the pointwise penalization approach of \cite{fornasier2010iterative}. In the multivariate setting, this results in a corresponding nonconvex and nonsmooth problem with linear constraints. 
The authors successively minimize local quadratic and strictly convex perturbations
(depending on the previous iterate)
of a (fixed) smoothed version of the objective by
augmented Lagrangian iterations which themselves can be accomplished by 
iterative thresholding via a Lipschitz continuous thresholding function.
They show that the accumulation points of the sequences produced by their algorithm  
are constraint critical points of the smoothed problem.
In the multivariate situation, a similar approach for rewriting the Potts problem results in an $\ell^0$ sparsity problem with additional equality constraints.
Algorithmic approaches for such $\ell^0$ sparsity problem with equality constraints 
are the penalty decomposition methods of \cite{lu2012iterative, lu2013sparse, zhang2013ell0}.
The connection with iterative hard thresholding is that the inner loop of the employed two stage process usually is of iterative hard thresholding type. 
The difference of the hard thresholding based methods to our approach in this paper is that we do not have to deal with constraints and the full matrix $A$ but with the nonseparable regularizing term $\|\nabla u \|_0$ instead of its separable counterpart $\|u\|_0.$ Hence we cannot use hard thresholding. 

Another frequently appearing method in the context of restoration of piecewise constant images is total variation minimization \cite{rudin1992nonlinear}. 
There the jump penalty $\|\nabla u \|_0$ is replaced by the total variation $\|\nabla u \|_1.$ 
The arising minimization problem is convex and therefore numerically tractable with 
convex optimization techniques \cite{chambolle2011first, condat2012direct}.
Candès, Wakin, and Boyd  \cite{candes2008enhancing} use iteratively reweighted total variation minimization for piecewise constant recovery problems.
Results of compressed sensing type related to the Potts problem have been derived by Needell and Ward \cite{needell2013near,needell2013stable}:
under certain conditions, minimizers of the Potts function agree with total variation minimizers.
However, in the presence of noise, total variation minimizers might significantly differ from minimizers of the Potts problem. But the minimizers of the Potts problem are the results frequently desired in practice.
Further, algorithms based on convex relaxations of the Potts problem \eqref{eq:pottsGeneralA} have gained a lot of interest in recent years; 
see, e.g., \cite{lellmann2011continuous, bae2011global, chambolle2012convex, brown2012completely, goldluecke2013tight, strekalovskiy2012convex}.

We next discuss approaches for the multivariate Potts problem for the situation $A = \mathrm{id}$
which is particularly interesting in image processing and for which there are some further approaches.  
The first class of approaches is the approach via graph cuts.
Here, the range space of $u$ is a priori restricted to a relatively small number of values.
The problem remains NP-hard, but it then allows for
an approach by sequentially solving binary partitioning problems via minimal graph cut algorithms
\cite{boykov2001fast,kolmogorov2004energy,boykov2004experimental}.
We here point out that this approach also can deal with 
(possibly non-convex) data fidelity terms more general 
than the squared $L^2$ data term 
employed in \eqref{eq:pottsGeneralA} (in the case $A = \mathrm{id}$).
Another approach is to limit the number $k$ of different values which $u$ may take without discretizing the range space a priori.  
For $k=2,$ active contours were used by Chan and Vese \cite{chan2001active} to minimize the corresponding binary Potts model. 
They use a level set function to represent the partitions which evolves according to the Euler-Lagrange equations of the Potts model. A globally convergent strategy for the binary segmentation problem is presented in  \cite{chan2006algorithms}.
The active contour method for $k=2$ was extended to 
larger $k$ in \cite{vese2002multiphase}. 
Note that, for $k >2$ the problem is NP hard.
We refer to \cite{cremers2007review} for an overview on level set segmentation.
In \cite{hirschmuller2005accurate,hirschmuller2006stereo, hirschmuller2008stereo}, Hirschm\"uller proposes 
a non-iterative strategy for the Potts problem which is based on cost aggregation.  
It has lower computational cost, but comes with lower quality reconstructions compared with graph cuts.
Due to the small number of potential values of $u,$ these methods mainly appear in connection with image segmentation. 
Methods for restoring piecewise constant images without restricting the range space are proposed in Nikolova et al.~\cite{nikolova2008efficient, nikolova2010fast}. 
They  use non-convex regularizers which are algorithmically approached using a graduated non-convexity approach. 
We note that the Potts problem \eqref{eq:pottsGeneralA} does not fall into the class of problems considered in \cite{nikolova2008efficient, nikolova2010fast}.
Last but not least, Xu et al.~\cite{xu2011image} proposed 
a piecewise constant model reminiscent of the Potts model
that is approached by a half-quadratic splitting 
using a pixelwise iterative thresholding type technique.
It was later extended to a method for blind image deconvolution \cite{xu2013unnatural}.

\paragraph{Contributions.}

The contributions of this paper are threefold: 
{\em (i)} We propose a new iterative minimization strategy for multivariate piecewise constant Mumford-Shah/Potts objective functions  as well as a (still NP-hard) quadratic penalty relaxation. 
{\em(ii)} We provide a convergence analysis of the proposed schemes. 
{\em(iii)} We show the applicability of our schemes in several experiments. 

Concerning {\em (i)}, we propose two schemes which are based on majorization-minimization 
or forward-backward splitting methods of Douglas-Rachford type \cite{lions1979splitting}.
The one scheme addresses the Potts problem directly, whereas the other scheme treats a quadratic penalty relaxation. The solutions of the relaxed problem themselves are not feasible for the Potts problem but near to a feasible solution of the Potts problem where nearness can be quantified. In particular, when a given tolerance in applications is acceptable the relaxed scheme is applicable.	
In contrast to the approaches in \cite{fornasier2010iterative,blumensath2008iterative} and \cite{lu2012iterative, lu2013sparse} for sparsity problems
which lead to thresholding algorithms, our approach leads to non-separable yet computationally tractable problems in the backward step.

Concerning {\em (ii)}, we first analyze the proposed quadratic penalty relaxation scheme. In particular, we show convergence towards a local minimizer. 
Due to the NP hardness of the quadratic penalty relaxation, the convergence result is in the range of what can be expected best. Concerning the scheme for the non-relaxed Potts problem we also perform a convergence analysis.
In particular, we obtain results on the convergence towards local minimizers on subsequences. 
The quality of the convergence results is comparable with the ones in \cite{lu2012iterative, lu2013sparse}. We note that compared with \cite{lu2012iterative, lu2013sparse} 
we face the additional challenge to deal with the non-separability of the backward step.
(We note that in practice we observe convergence of the whole sequence, not on a subsequence.)

Concerning {\em (iii)} 
	we consider problems with full and partial data. We begin to apply our algorithms to deconvolution problems. In particular, we consider deblurring and denoising 
	Gaussian blur images and
	motion blur images, respectively. We further consider noisy and undersampled Radon data, together with the task of joint reconstruction, denoising and segmentation.
	Finally, we use our method in the situation of pure image partitioning (without blur)
	which is a widely considered problem in computer vision. 
	
\paragraph{Organization of the paper.}

In Section~\ref{sec:DerivationAlgorithm} we derive the proposed algorithmic schemes. 
In Section~\ref{sec:Analysis} we provide a convergence analysis for the proposed schemes. 
In Section~\ref{sec:NumericalExperiments} we apply the algorithms derived in the present paper to concrete reconstruction problems.
In Section~\ref{sec:Conclusion} we draw conclusions.

\section{Majorization-minimization algorithms for multivariate Potts problems}
\label{sec:DerivationAlgorithm}

\subsection{Discretization}\label{sec:Discretization}

We use the following finite difference type discretiziation 
of the multivariate Potts problem \eqref{eq:pottsGeneralA} given by
\begin{equation}
\label{eq:PottsDiscrete}
P_\gamma(u) = \| Au - f\|_2^2 + \gamma \sum _{s=1}^S \omega_s  \left\| \nabla_{a_s} u \, \right\|_0,
\end{equation}
where the $a_s \in \mathbb Z^2$ come from a finite set of directions and the symbol $\nabla_{a_s} u \, (i,j)$ denotes the directional difference $u_{(i,j)+a_s} - u_{i,j}$ with respect to the direction $a_s$ at the pixel $(i,j)$. 
The symbol $\| \nabla_{a_s} u \|_0 $ denotes the number of nonzero entries of  $\nabla_{a_s} u.$
The simplest set of directions consists of the
unit vectors $a_1=(0,1),$ $a_2=(1,0)$ along with unit weights.
Unfortunately, when refining the grid, this discretization converges to a limit that measures the boundary in terms
of the $\ell^1$ analogue of the Hausdorff measure \cite{chambolle1995image}.
The practical consequences  are unwanted block artifacts in the reconstruction (geometric staircasing).
More isotropic results are obtained by adding  the diagonals
 $a_3=(1,1),a_4=(1,-1)$ to the directions $a_1$ and $a_2;$
a near isotropic discretization can be achieved by extending this system by the knight moves 
$a_5=(1,2),a_6=(2,1),a_7=(1,-2),a_8=(2,-1).$
(The name is inspired by the possible moves of a knight in chess.)
Weights $\omega_s$ for the system $\{a_1,a_2,a_3,a_4\}$ of coordinate directions and diagonal directions 
can be chosen as $\omega_{s} = \sqrt{2}-1  $ for the coordinate part $s=1,2$ 
and $\omega_{s} = 1-\tfrac{\sqrt{2}}{2}$} for diagonal part $s=3,4$. 
When additionally adding knight-move directions, weights $\omega_s$ for the system $\{a_1,\ldots,a_8\}$
can be chosen as $\omega_{s} = \sqrt{5} - 2$ for the coordinate part $s=1,2,$ 
$\omega_{s} = \sqrt{5} - \frac{3}{2}\sqrt{2}$ for diagonal part $s=3,4$, 
and $\omega_{s} = \frac{1}{2}(1 + \sqrt{2} - \sqrt{5})  $ for diagonal part $s=5,\ldots,8.$
There are several ways to derive weights $\omega_s$ for the neighborhood systems:  the method of \cite{chambolle1999finite} is based on an optimization approach, the method of \cite{boykov2003computing} is based on the Cauchy-Crofton formula, and the approach of \cite{storath2015joint} 
	is based on equating the euclidean lengths of straight lines  and the lengths of their digital counterparts.
We note that for the system $\{a_1,a_2,a_3,a_4\}$ of coordinate directions and diagonal directions the weights of \cite{chambolle1999finite} and in \cite{storath2015joint} coincide; the weights displayed for the knight-move case above are the ones derived by the scheme in \cite{storath2015joint}. 
For further details we refer to these references. 

We record that the considered problem \eqref{eq:PottsDiscrete} has a minimizer.
\begin{theorem}\label{thm:ExistenceMinimizer}
	The discrete multivariate Potts problem \eqref{eq:PottsDiscrete} has a minimizer.
\end{theorem}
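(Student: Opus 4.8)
The plan is to decouple the combinatorial content of the energy (which directional differences are nonzero) from its continuous content (the actual pixel values), using that the discrete image domain is a finite set. Denote the image space by the finite-dimensional Euclidean space $X$. To each $u \in X$ associate the partition $\mathcal P(u)$ of the pixel grid into the connected components of the graph whose edges are the pairs $\{(i,j),(i,j)+a_s\}$, $s = 1,\dots,S$, for which $\nabla_{a_s} u\,(i,j) = 0$; equivalently, two pixels share a block iff they are joined by a path of neighbouring pixels (in one of the directions $a_1,\dots,a_S$) along which $u$ is constant. Then $u$ is constant on each block of $\mathcal P(u)$, and $\nabla_{a_s} u\,(i,j) \neq 0$ exactly when $(i,j)$ and $(i,j)+a_s$ lie in different blocks; hence the jump term of \eqref{eq:PottsDiscrete} equals a quantity $J(\mathcal P(u))$ that depends only on the partition, namely the weighted count of pairs straddling two distinct blocks. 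Since the grid is finite, only finitely many partitions occur.

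First I would fix a partition $\mathcal P$ and let $V_{\mathcal P} \subseteq X$ be the linear subspace of images constant on every block of $\mathcal P$. Minimizing $\|Au - f\|_2^2$ over $u \in V_{\mathcal P}$ is a linear least squares problem: $A(V_{\mathcal P})$ is a (closed) linear subspace of the data space, so the orthogonal projection of $f$ onto it exists, and any $u_{\mathcal P} \in V_{\mathcal P}$ with $A u_{\mathcal P}$ equal to that projection attains the minimal residual $r_{\mathcal P}$ --- no coercivity or injectivity of $A$ is needed. Since $u_{\mathcal P}$ is constant on blocks, each of its nonzero directional differences straddles two distinct blocks of $\mathcal P$, so $\sum_{s}\omega_s\|\nabla_{a_s} u_{\mathcal P}\|_0 \le J(\mathcal P)$ and therefore $P_\gamma(u_{\mathcal P}) \le r_{\mathcal P} + \gamma J(\mathcal P) =: m_{\mathcal P}$.

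Finally I would take a partition $\mathcal P^\ast$ minimizing the finite family $\{m_{\mathcal P}\}$ and check that $u_{\mathcal P^\ast}$ is a global minimizer of $P_\gamma$: for an arbitrary $u \in X$, applying the construction with $\mathcal P = \mathcal P(u)$ gives $u \in V_{\mathcal P(u)}$ with $\|Au - f\|_2^2 \ge r_{\mathcal P(u)}$ and jump term exactly $J(\mathcal P(u))$, so $P_\gamma(u) \ge m_{\mathcal P(u)} \ge m_{\mathcal P^\ast} \ge P_\gamma(u_{\mathcal P^\ast})$. Hence the infimum of $P_\gamma$ is attained at $u_{\mathcal P^\ast}$. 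The hard part is conceptual rather than computational: $P_\gamma$ is neither convex nor coercive --- along any direction in $\ker A$ the data term is constant and the jump term eventually stabilizes, so the energy does not grow at infinity --- which rules out the textbook ``lower semicontinuous plus coercive implies a minimizer exists'' argument on $X$ as a whole. Finiteness of the grid is exactly what repairs this: it makes the number of relevant partitions finite and each stratum a genuine subspace on which the least squares subproblem always has a solution. A shorter variant of the same idea is to take a minimizing sequence, pass to a subsequence along which the set of nonzero directional differences (hence $\mathcal P(u^k)$) is constant, and apply the least squares step once on the corresponding subspace.
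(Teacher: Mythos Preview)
Your argument is correct. The paper does not actually prove this theorem in the body; it simply writes that the result ``can be seen by following the lines of the proof of \cite[Theorem~2.1]{hohm2015algorithmic},'' where the analogous statement is shown for the (non-piecewise-constant) discrete Mumford--Shah functional. Your proof is a clean, self-contained realization of precisely that strategy: stratify the image space by the finitely many possible jump configurations (equivalently, partitions of the pixel grid), observe that on each stratum the energy reduces to a linear least-squares problem which always admits a minimizer regardless of whether $A$ is injective, and then pick the best of finitely many candidates.

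Two minor remarks. First, your explicit observation that $P_\gamma$ is in general neither convex nor coercive (constant along directions in $\ker A$, with a bounded jump term) is a useful piece of motivation that the paper omits; it explains why one cannot simply invoke lower semicontinuity plus coercivity on all of $X$. Second, the ``shorter variant'' you sketch at the end --- extract from a minimizing sequence a subsequence with fixed jump pattern, then solve least squares once --- is exactly the form such arguments usually take in the cited literature, so either presentation is fine.
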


The validity of Theorem~\ref{thm:ExistenceMinimizer} can be seen by following the lines of the proof of  \cite[Theorem 2.1]{hohm2015algorithmic} where an analogous statement is shown for the (non-piecewise constant) Mumford-Shah problem.	

\paragraph{Vector-valued images.}
We briefly discuss the extension of \eqref{eq:PottsDiscrete} to vector-valued 
images and multi-channel data, e.g.,
(blurred) RGB color images.
To this end, we assume multi-channel data $f = (f_1,\ldots,f_C)$ 
consisting of $C$ channels and 
images $u = (u_1,\ldots,u_C)$.
In this situation, the role of the first summand on the right-hand side of \eqref{eq:PottsDiscrete} is taken by the channel-wise sum 
$\sum_{c=1}^C \| Au_c - f_c\|_2^2$.
The symbol $\nabla_{a_s} u(i,j)$ now denotes the vector 
of directional differences with entries
$u_{(i,j)+a_s,c} - u_{i,j,c}$, $c=1,\ldots,C$ and the entirety of these
vectors form the rows of $\nabla u_{a_s}$.
Consequently, $\| \nabla_{a_s} u \|_0$ denotes
the number of nonzero rows of $\nabla_{a_s}u$.
As a result, introducing a jump between two pixels in all channels 
has the same costs as opening a jump in a single channel only.
This enforces the jumps to be aligned across the channels which
is in contrast to a channel-wise application of 
the single-channel Potts model \eqref{eq:PottsDiscrete}.

\subsection{Derivation of the proposed algorithmic schemes}\label{sec:ProposedSchemes}

We start out with the discretization \eqref{eq:PottsDiscrete} of the multivariate Potts problem.
We introduce $S$ versions $u_1,\ldots,u_S$ of the target $u$ and 
link them via equality constraints in the following consensus form to obtain the problem
\begin{equation}
\label{eq:MinimizeMultVar}
	P_\gamma(u_1,\ldots,u_S) \to \min,     \qquad \text{ s.t. } \quad u_1 = \ldots = u_S,
\end{equation}
 where the function $P_\gamma(u_1,\ldots,u_S)$ of the $S$ variables $u_1,\ldots,u_S$ is given by
\begin{equation}
\label{eq:PottsDiscreteSvariables}
P_\gamma(u_1,\ldots,u_S) = \sum _{s=1}^S \frac{1}{S}  \left\| Au_s - f \right\|_2^2 + \gamma \sum _{s=1}^S \omega_s  \left\| \nabla_{a_s} u_s \, \right\|_0.
\end{equation}
Note that solving \eqref{eq:MinimizeMultVar} is equivalent to solving the discrete Potts problem \eqref{eq:PottsDiscrete}. 
Further, note that we have overloaded the symbol $P_\gamma$ which, for one argument $u,$ denotes the Potts function of \eqref{eq:PottsDiscrete} and for $S$ arguments $u_1,\ldots,u_S$ denotes the 
energy function of \eqref{eq:PottsDiscreteSvariables}; we have the relation $P_\gamma(u,\ldots,u)= P_\gamma(u)$.

\paragraph{A majorization-minimization approach to the quadratic penalty relaxation of the Potts problem.} 
The quadratic penalty relaxation of \eqref{eq:PottsDiscreteSvariables} is given by 
\begin{equation} 
\label{eq:MinimizeMultVarRelax}
   P_{\gamma, \rho}(u_1,\ldots,u_S)  = \sum _{s=1}^S \frac{1}{S}  \left\| Au_s - f \right\|_2^2 + \gamma \sum _{s=1}^S \omega_s  \left\| \nabla_{a_s} u_s \, \right\|_0  +  
   \rho \sum_{1\leq s < s' \leq S}  c_{s,s'} \  \| u_s - u_{s'}\|_2^2.
\end{equation} 
Here, the soft constraints which replace the equalities  $u_1 = \ldots = u_S$ are realized via the squared Euclidean norms $\sum_{1\leq s < s' \leq S}  c_{s,s'} \  \| u_s - u_{s'}\|_2^2,$ 
where the nonnegative numbers $c_{s,s'}$ denote weights (which may be set to zero if no direct coupling between the particular $u_s, u_{s'}$ is desired.)
The symbol $\rho$ denotes a positive penalty parameter promoting the soft constraint, i.e.,    
increasing $\rho$ enforces the $u_i$ to be closer to each other w.r.t.\ the Euclidean distance. 
We note that we later analytically quantify the size of $\rho$ which is necessary to obtain an a priori prescribed tolerance in the $u_i;$ see \eqref{eq:maxNormTol} below.
Frequently, we use the short-hand  notation
\begin{equation}\label{eq:rhossprimeIsrhotimesc}
\rho_{s,s'}= \rho \ c_{s,s'}.
\end{equation} 
Typical choices of the $\rho_{s,s'}$ are 
\begin{equation}\label{eq:ChoiceRhos}
	\rho_{s,s'} = \rho   \quad \text{ for all } s,s', \qquad \qquad
	\text{ or} \quad
	\rho_{s,s'} = \rho  \  \delta_{((s+1){\rm\,mod \,} S ), s'}\,,    
\end{equation}
i.e., the constant choice ($c_{s,s'}=1$), as well as the coupling between consecutive variables with constant parameter ($\delta_{s,t} =1$ if and only if $s=t,$ and $\delta_{s,t} =0$ otherwise.) 
We note that in these situations only one additional positive parameter $\rho$ appears, and that this parameter is tied to the tolerance one is willing to accept as a distance of the $u_i;$
see Algorithm~\ref{alg:Algo1}.

For the majorization-minimization approach, we derive a surrogate functional 
\cite{daubechies2004iterative} of the function 
$P_{\gamma, \rho}(u_1,\ldots,u_S)$ of \eqref{eq:MinimizeMultVarRelax}.
For this purpose, we introduce the block matrix $B$ and the vector $g$ given by 
\begin{equation}\label{eq:DefOfBandg}
B \ = \
\begin{pmatrix}
S^{-1/2}A   &  0           & & \cdots & & 0   \\
0           &  S^{-1/2}A   & & \cdots & & 0   \\
\vdots           &     &  & \ddots & &  \vdots  \\
0   & 0  & & \cdots & S^{-1/2}A  & 0 \\
0   & 0  & & \cdots & 0 & S^{-1/2}A  \vspace{.8em} \\   
\rho_{1,2}^{1/2}I   & -\rho_{1,2}^{1/2}I & 0      & \ldots & 0 & 0\\
\rho_{1,3}^{1/2}I   & 0  & -\rho_{1,3}^{1/2}I     & \ldots & 0 & 0\\
& \vdots   &   &   & \vdots    &   \\
\rho_{1,S}^{1/2}I   & 0 & 0 & \ldots & 0 & -\rho_{1,S}^{1/2}I \\
0   & \rho_{2,3}^{1/2}I & -\rho_{2,3}^{1/2}I & \ldots & 0 & 0 \\
&    \vdots &  &   & \vdots    &   \\
0   & \rho_{2,S}^{1/2}I & 0 & \ldots & 0 & -\rho_{2,S}^{1/2}I \\ 
&  &  & \vdots & &  \\
&  &  & \vdots &  & \\        
0   & 0 & 0 & \ldots & \rho_{S-1,S}^{1/2}I & -\rho_{S-1,S}^{1/2}I \\   
\end{pmatrix},
\qquad
g \ = \
\begin{pmatrix}
S^{-1/2}f\\
S^{-1/2}f\\
\vdots  \\
S^{-1/2}f    \\
S^{-1/2}f   
\vspace{.8em} \\
0 \\
0\\
\vdots  \\
0  \\
0   \\
\vdots \\
0  \\ 
\vdots\\
\vdots\\        
0\\   
\end{pmatrix}. \ 
\end{equation}
Here $I$ denotes the identity matrix and $0$ the zero matrix; The matrix $B$ has $S$ block columns and $S+S(S-1)/2$ block rows. Further, we introduce the difference operator $D$ given by
\begin{equation}
\label{eq:DefDiffOp}
D(u_1,\ldots,u_S) = 
\begin{pmatrix}
\nabla_{a_1} u_1 \\
\vdots  \\
\nabla_{a_S} u_S \\
\end{pmatrix}
\end{equation}
which applies the difference w.r.t. the $i$th direction to the $i$th component of $u.$
We employ the weights $\omega_1,$ $\ldots, \omega_S$ to define the quantity
$\|D(u_1,\ldots,u_S)\|_{0,\omega}$ which counts the weighted number of jumps by
\begin{equation}
\label{eq:DefDiffOpZeroNorm}
\|D(u_1,\ldots,u_S)\|_{0,\omega} = \sum _{s=1}^S \omega_s  \left\| \nabla_{a_s} u_s \, \right\|_0.
\end{equation}
With all this comprehensive notation at hand, we may rewrite 
the function of \eqref{eq:MinimizeMultVarRelax} as
\begin{equation}
\label{eq:RelaxedProblemInShortNotation}
P_{\gamma,\rho}(u_1,\ldots,u_S) = \left\| B(u_1,\ldots,u_S)^T - g \right\|_2^2 + \gamma \ \Big\| \ D(u_1,\ldots,u_S) \ \Big\|_{0,\omega}.
\end{equation}
Using the representation \eqref{eq:RelaxedProblemInShortNotation}, the surrogate functional  in the sense of \cite{daubechies2004iterative} of $P_{\gamma,\rho}$
is given by 
\begin{align}
\label{eq:DerSurrogateInShortNotation}
P_{\gamma, \rho}^{\rm surr}&(u_1,\ldots,u_S,v_1,\ldots,v_S) = 
\frac{1}{L_{\rho}^2}\left\| B(u_1,\ldots,u_S)^T - g \right\|_2^2 + \frac{\gamma}{L_{\rho}^2} \ \Big\| \ D(u_1,\ldots,u_S) \ \Big\|_{0,\omega}\\
& \quad - \frac{1}{L_{\rho}^2}\left\| B(u_1,\ldots,u_S)^T -  B(v_1,\ldots,v_S)^T \right\|_2^2 +
\left\| (u_1,\ldots,u_S)^T -  (v_1,\ldots,v_S)^T \right\|_2^2.
\notag 
\end{align}
Here $L_{\rho} \geq 1$ denotes a constant which is chosen larger than the spectral norm $\| B \|$ of $B$ 
(i.e., the operator norm w.r.t.\ the $\ell^2$ norm.) This scaling is made to ensure that $B/L_{\rho}$ is contractive. 
In terms of $A$ and the penalties $\rho_{s,s'},$ we require that 
\begin{equation} \label{eq:EstBgeneral}
	L_{\rho}^2 > \|A\|_2^2/S + 2 \max_{s \in \{1,\ldots,S \} } \sum_{s': s'\neq s}^{S} \rho_{s,s'}.
\end{equation} 
	For the particular choice $\rho_{s,s'} = \rho$
	as on the left-hand side of \eqref{eq:ChoiceRhos} we can choose $L_{\rho}^2$ smaller, i.e.,   
	$
	L_{\rho}^2  > \|A\|_2^2/S + S \rho.
	$
	For only coupling neighboring $u_s$ with the same constant $\rho$, i.e.,  
	the right-hand coupling of \eqref{eq:ChoiceRhos}, we have
	$
	L_{\rho}^2  > \|A\|_2^2/S + \alpha \rho,
	$
	where 
	$\alpha = 4,$ if $S$ is even, 
	and $\alpha = 2 - 2 \cos \left(\frac{\pi(S-1)}{S}\right)$ if $S$ is odd.
These choices ensure that $B/L_{\rho}$ is contractive by Lemma~\ref{lem:BbyLcontractive}.
Basics on surrogate functionals as we need them for this paper are gathered in Section~\ref{sec:MajoMin}.  
Further details on surrogate functionals can be found in \cite{daubechies2004iterative,blumensath2008iterative,blumensath2009iterative}.

Using elementary properties of the inner product 
shows that 
\begin{align}\label{eq:SurrInExplicitForm}
P_{\gamma, \rho}^{\rm surr} (u_1,\ldots,u_S,v_1,\ldots,v_S) =  
&\bigg\|
(u_1,\ldots,u_S)^T-  \bigg((v_1,\ldots,v_S)^T-\frac{1}{L_{\rho}^2} B^T (B(v_1,\ldots,v_S)^T-g) \bigg) 
\bigg\|^2_2 \notag \\
 &+ \frac{\gamma}{L_\rho^2}\Big\| D(u_1,\ldots,u_S) \Big\|_{0,\omega}
+ R(v_1,\ldots,v_S) ,
\end{align}
where $ R(v_1,\ldots,v_S) $ is a rest term which is irrelevant when minimizing $P_{\rm surr}$ w.r.t.\
$u_1,\ldots,u_S$ for fixed $v_1,\ldots,v_S.$ Writing this down 
in terms of the original system matrix $A$ and the data $f$  yields
\begin{align} \label{eq:MinimizeMultVarSurr}
P_{\gamma, \rho}^{\rm surr}&\left( u_1,\ldots,u_S,v_1,\ldots,v_S \right) \\ = & \sum _{s=1}^S  
\left[
  \left\| u_s - \left(v_s + \tfrac{1}{SL_{\rho}^2} A^\ast  f - 
  \tfrac{1}{S L_{\rho}^2} A^\ast A v_s - \sum_{s \neq s'}\tfrac{\rho_{s,s'}}{L_{\rho}^2}
   (v_s-v_{s'})  
   \right)\right\|_2^2 + \tfrac{\gamma \omega_s}{L_{\rho}^2}  \left\| \nabla_{a_s} u_s \, \right\|_0
\right]  
+ R(v). \notag 
\end{align}
For the quadratic penalty relaxation of the Potts problem, i.e., for minimizing  the  problem  \eqref{eq:MinimizeMultVarRelax} we propose to use the surrogate iteration, i.e.
$u^{(n+1)}_1,\ldots,u^{(n+1)}_S$ 
$\in  \argmin_{u_1,\ldots,u_S} P_{\gamma, \rho}^{\rm surr} (u_1,\ldots,$ 
$u_S,u^{(n)}_1,\ldots,u^{(n)}_S).$
Applied to \eqref{eq:MinimizeMultVarSurr}, this surrogate iteration reads
\begin{equation}\label{eq:surIt4MultPottsRelaxed}
 \left(u^{(n+1)}_1,\ldots,u^{(n+1)}_S \right) 
 \in \argmin_{u_1,\ldots,u_S} 	
 \sum _{s=1}^S  
 \left[
 \left\| u_s - h^{(n)}_s \right\|_2^2 + \tfrac{\gamma \omega_s}{L_{\rho}^2}  \left\| \nabla_{a_s} u_s \, \right\|_0
 \right]  
\end{equation}
where $h^{(n)}_s$ is given by 
\begin{equation}\label{eq:dataInSurrIT}
	h^{(n)}_s = u^{(n)}_s + \tfrac{1}{SL_{\rho}^2} A^\ast  f - 
	\tfrac{1}{S L_{\rho}^2} A^\ast A u^{(n)}_s - \sum_{s':s' \neq s}\tfrac{\rho_{s,s'}}{L_{\rho}^2}
	(u^{(n)}_s-u^{(n)}_{s'}),  
	     \quad \text{ for all } s \in\{1,\ldots,S\}.
\end{equation}
Note that in Section~\ref{sec:ExactSubPotts} below, we derive an efficient algorithm which computes an exact minimizer of \eqref{eq:surIt4MultPottsRelaxed}.
Now assume that we are willing to accept a deviation between the $u_s$ which is small,
i.e.,  
\begin{equation}\label{eq:maxNormTol}
	 \|u_s - u_{s'}\|^2_2 = \sum_{i,j}|(u_s)_{ij} - (u_{s'})_{ij}|^2  < \tfrac{\varepsilon^2}{c_{s,s'}},  	 
\end{equation} 
for $\varepsilon>0$ and
for indices $s,s'$ with $c_{s,s'} \neq 0.$
The following algorithm computes a result fulfilling \eqref{eq:maxNormTol}.
\begin{myAlgo}\label{alg:Algo1}
  We consider the quadratic penalty relaxed Potts problem \eqref{eq:MinimizeMultVarRelax} and tolerance $\varepsilon$ for the targets $u_s$ we are willing to accept. We propose the following algorithm 
  for the  relaxed Potts problem \eqref{eq:MinimizeMultVarRelax} (which yields a result with targets $u_s$ deviating from each other by at most $\varepsilon/\sqrt{c_{s,s'}}$).
  \begin{itemize}
  	\item  Set $\rho$ according to \eqref{eq:chooseRho}, 
  	set $L_{\rho}$ according to \eqref{eq:EstBgeneral}
  	(or, in the special cases of \eqref{eq:ChoiceRhos}, as below \eqref{eq:chooseRho} and \eqref{eq:EstBgeneral}.)
  	\item [] Initialize $u^{(n)}_s$ as discussed in the corresponding paragraph below, (e.g., $u^{(n)}_s = 0$ for all $s.$) 
    \item Iterate until convergence:   
  \begin{align}\notag
  \text{ 1.}\quad & h^{(n)}_s = u^{(n)}_s + \tfrac{1}{SL_{\rho}^2} A^\ast  f - 
  \tfrac{1}{S L_{\rho}^2} A^\ast A u^{(n)}_s - \sum_{s':s' \neq s}\tfrac{\rho_{s,s'}}{L_{\rho}^2}
  (u^{(n)}_s-u^{(n)}_{s'}),  
  \quad s = 1,\ldots,S,  \\
  \label{eq:backwardStepAlg1}
  \text{ 2.}\quad & \left(u^{(n+1)}_1,\ldots,u^{(n+1)}_S \right) 
  \in \argmin_{u_1,\ldots,u_S} 	
  \sum _{s=1}^S  
  \left[
  \left\| u_s - h^{(n)}_s \right\|_2^2 + \tfrac{\gamma \omega_s}{L_{\rho}^2}  \left\| \nabla_{a_s} u_s \, \right\|_0
  \right].  
  \end{align}  
  \end{itemize}	
\end{myAlgo}

We will see in Theorem~\ref{thm:RelaxedConvergesAndTolerance} that this algorithm converges to a local minimizer of the quadratic penalty relaxation \eqref{eq:MinimizeMultVarRelax} and that the $u_s$ are $\varepsilon$-close, i.e., \eqref{eq:maxNormTol} is fulfilled.

\paragraph{The relation between the Potts problem and its quadratic penalty relaxation and 
obtaining a feasible solution for the Potts problem \eqref{eq:PottsDiscreteSvariables}
from the output of Algorithm~\ref{alg:Algo1}.
}

As pointed out above, we show in Theorem~\ref{thm:RelaxedConvergesAndTolerance} that Algorithm~\ref{alg:Algo1} produces a local minimizer of the quadratic penalty relaxation \eqref{eq:MinimizeMultVarRelax} of the Potts problem \eqref{eq:PottsDiscreteSvariables}
and that the corresponding variables of a resulting solution are close up to an a priori prescribed tolerance. This may in practice be already enough. 
However, strictly speaking  a local minimizer of the quadratic penalty relaxation \eqref{eq:MinimizeMultVarRelax} is not feasible for the Potts problem \eqref{eq:PottsDiscreteSvariables}.

We will now explain a projection procedure to derive a feasible solution for the Potts problem \eqref{eq:PottsDiscreteSvariables} from a local minimizer of \eqref{eq:MinimizeMultVarRelax} with nearby variables $u_s$ (as produced by Algorithm~\ref{alg:Algo1}.)
Related theoretical results are stated as Theorem~\ref{thm:EffectRelaxedAlgo4OriginalProblem}. 
In particular, we will see that in case the image operator $A$ is lower bounded, the projection procedure applied to the output of Algorithm~\ref{alg:Algo1} yields a feasible point which is close to a local minimizer of the original Potts problem \eqref{eq:PottsDiscreteSvariables}.

In order to explain the averaging procedure, we need some notions on partitionings.
Recall that a partitioning $\mathcal P$ consists of a (finite number of) segments $\mathcal P_i$  
which are pairwise disjoint sets of pixel coordinates whose union equals the image domain $\Omega,$ i.e.,
\begin{align}
	\cup_{i=1}^{N_\mathcal P} \mathcal P_i = \Omega, \qquad \mathcal P_i \cap \mathcal P_j = \emptyset \quad \text{for all $i,j=1,\ldots,N_\mathcal P.$}
\end{align} 
Here, we assume that each segment $\mathcal P_i$ is connected w.r.t.\ the neighborhood system $a_1,\ldots,a_S$
in the sense that there is a path connecting any two elements in $\mathcal P_i$ with steps in  
$a_1,\ldots,a_S.$

We will need the following proposed notion of a directional partitioning.
A directional partition w.r.t. a set of $S$ directions $a_1,\ldots,a_S$ consists of a set $\mathcal I$ of (discrete) intervals $I$,
where each interval $I$ is associated with exactly one of the directions $a_1,\ldots,a_S;$ 
here, an interval $I$ associated with the direction $a_s$ has to be of the form $I = \{(i,j)+ k a_s : k = 0,\ldots, K-1\},$
where $K \in \mathbb N$ and $I$ belongs to the discrete domain.
(For each direction $a_s$, the corresponding intervals form an ordinary partition.)
We note that Algorithm~\ref{alg:Algo1} which produces output $u=(u_1,\ldots,u_S):\Omega \to \mathbb R^s$ induces a directional partitioning as follows. We observe that each variable $u_s$ is associated with a direction $a_s.$ For any $s \in \{1,\ldots,S\},$ we let each (maximal) interval of constance of $u_s$ be an interval in $\mathcal I$ associated with $a_s.$

Each partitioning induces a directional partitioning $\mathcal I$ by letting the intervals $I$ of $\mathcal I$ be the stripes with direction $a_s$ obtained from segment $\mathcal P_i$
for each direction $s =1,\ldots, S$ 
and each segment $\mathcal P_i, i=1, \ldots, N_\mathcal P.$
Furthermore, each directional partitioning $\mathcal I$ induces a partitioning by the following merging process. 
\begin{definition}\label{def:EquivPath}	
	We say that pixels $x,y$ are related, in symbols, $x \sim y$, if there is a path $x_0=x,\ldots,x_N=y$ connecting $x,y$ in the sense that for any consecutive members  
	$x_i,x_{i+1},$ $i=1,\ldots,N-1,$ of the path there is an interval $I$ of the directional partitioning $\mathcal I$ containing both $x_i,x_{i+1}.$
\end{definition}
The relation $x\sim y$ obviously defines an equivalence relation and the corresponding equivalence classes $\mathcal P_i$
yield a partitioning on $\Omega.$ We use the symbols 
\begin{align}\label{eq:FromDirectionalPartitioning2Partitioning}
	\mathcal I(\mathcal P) = \mathcal I_{\mathcal P},\qquad  
	\mathcal P(\mathcal I) = \mathcal P_{\mathcal I},
\end{align}
to denote the mappings assigning a partitioning a directional partitioning and vice versa, respectively.

As a final preparation we consider a function  $u=(u_1,\ldots,u_S):\Omega \to \mathbb R^s$ as produced by Algorithm~\ref{alg:Algo1} and a partitioning $\mathcal P$ of $\Omega$ and define the following projection to a function $\pi_\mathcal P(u): \Omega \to \mathbb R$ by  
\begin{equation}\label{eq:DefProjectionPi}
	\pi_\mathcal P(u)|_{\mathcal P_i}
	 = \frac{\sum_{x \in \mathcal P_i}  \sum_{s = 1}^S  u_s(x) }{ S \ \#\mathcal P_i}, 	
\end{equation}
where the symbol $\#\mathcal P_i$ denotes the number of elements in the segment $\mathcal P_i.$
Hence the projection $\pi$ defined via \eqref{eq:DefProjectionPi} averages w.r.t.\ all components of $u$
and all members of the segment $\mathcal P_i$ and so produces a piecewise constant function w.r.t.\ the partitioning $\mathcal P.$

Using these notions we propose the following projection procedure.
\begin{projProc}[Projection Procedure]\label{proc:ProjProc}
	We consider output $u=(u_1,\ldots,u_S):\Omega \to \mathbb R^s$ of 
	Algorithm~\ref{alg:Algo1} together with its induced directional partitioning $\mathcal I.$
	
	\begin{itemize}
		\item[1.] Compute the partitioning  $\mathcal P(\mathcal I) = \mathcal P_{\mathcal I}$ induced by the directional partitioning $\mathcal I$ as explained above \eqref{eq:FromDirectionalPartitioning2Partitioning}.
		
		\item[2.] Project $u=(u_1,\ldots,u_S):\Omega \to \mathbb R^s$ 
		to $\pi_{\mathcal P_{\mathcal I}}(u)$ using \eqref{eq:DefProjectionPi} for the partitioning  $\mathcal P(\mathcal I) = \mathcal P_{\mathcal I},$ and 
		return $\pi_{\mathcal P_{\mathcal I}}(u)$ as output.	 
	\end{itemize}	
\end{projProc}

We notice that when having a partitioning $\mathcal P_{\mathcal I}$ solving the normal equation in the  space of functions constant on $\mathcal P_{\mathcal I}$ would be an alternative to the above second step which, however, might be more expensive.

\paragraph{A penalty method for the Potts problem based on 
	a majorization-minimization approach for its quadratic penalty relaxation.}

Intuitively, increasing the parameters $\rho$ during the iterations should tie the $u_s$ closer together 
such that the constraint of \eqref{eq:MinimizeMultVar} should be ultimately fulfilled which results in 
an approach for the initial Potts problem \eqref{eq:PottsDiscrete}.  
Recall that
$\rho_{s,s'} = \rho \ c_{s,s'},$ 
was defined by \eqref{eq:rhossprimeIsrhotimesc}, 
where the $c_{s,s'}$ are nonnegative numbers weighting the constraints.  
We here increase $\rho$ while leaving the $c_{s,s'}$ fixed during this process.

\begin{myAlgo}\label{alg:Algo2}
	We consider the Potts problem \eqref{eq:MinimizeMultVar} in $S$ variables (which is equivalent to \eqref{eq:PottsDiscrete} as explained above). We propose the following algorithm 
	for the Potts problem \eqref{eq:MinimizeMultVar}.	
	\begin{itemize}
	    \item[] Let $\rho^{(k)}$ be a strictly increasing sequence 
	    (e.g., $\rho^{(k)} = \tau^k\rho^{(0)},$ with $\rho_0,\tau>1$)
	    and $\delta_k \to 0$ be a strictly decreasing sequence converging to zero 	
	    (e.g., $\delta_k =  \delta_0/\tau^k.$) 	
		Further, let 		
		\begin{align}\label{eq:defTGeneral}
		 t >  2 \sigma_1^{-1/2} S^{-1/2} \|A\| \ \|f\|,
		\end{align}
		where $\sigma_1$ is the smallest non-zero eigenvalue of $C^TC$ with $C$ given by 
		\eqref{eq:CforCoupling}.
		For the particular choice of coupling given by the left-hand and right hand side of \eqref{eq:ChoiceRhos}
		we let 
		\begin{align}\label{eq:LagrangeMultEstSpecial}
		t > \tfrac{2}{S}\|A\| \ \|f\|, \quad 
		\text{ and }\quad 
		 t > 2(2-2\cos(2\pi/S))^{-1/2} S^{-1/2} \|A\| \ \|f\|,
		 \quad \text{ respectively.}
 		\end{align} 
		\item Initialize $u^{(0)}_s := u^{(0,0)}_s$ 
		as discussed in the corresponding paragraph below, (e.g., $u^{(0)}_s = 0$ for all $s.$) 
		
		\item[] Set \ $\rho = \rho^{(0)},\ \rho_{s,s'} = \rho^{(0)}c_{s,s'},\ \delta = \delta_0,\ k,n=0;$ 
		set $L_{\rho}$ according to \eqref{eq:EstBgeneral} (or, in the special cases of \eqref{eq:ChoiceRhos}, as explained below \eqref{eq:EstBgeneral})
		
		\item[A.] While
		\begin{align}\label{eq:TerminationInnerLoopAlg2}
			\left\|u^{(k,n)}_s - u^{(k,n)}_{s'} \right\| > \frac{t}{\rho \sqrt{c_{s,s'}}}, \quad \text{ or } \quad
			\left\| u^{(k,n)}_s - u^{(k,n-1)}_s \right\| > \frac{\delta}{L_{\rho}}							    
		\end{align}
		do
		\begin{align}\notag
		\text{ 1.}\quad & h^{(k,n)}_s = u^{(k,n)}_s + \tfrac{1}{SL_{\rho}^2} A^\ast  f - 
		\tfrac{1}{S L_{\rho}^2} A^\ast A u^{(k,n)}_s - \sum_{s':s' \neq s}\tfrac{\rho_{s,s'}}{L_{\rho}^2}
		(u^{(k,n)}_s-u^{(k,n)}_{s'}),  
		\quad s = 1,\ldots,S,  \\
		\label{eq:backwardStepAlg2}
		\text{ 2.}\quad & \left(u^{(k,n+1)}_1,\ldots,u^{(k,n+1)}_S \right) 
		\in \argmin_{u_1,\ldots,u_S} 	
		\sum _{s=1}^S
		\left[
		\left\| u_s - h^{(k,n)}_s \right\|_2^2 + \tfrac{\gamma \omega_s}{L_{\rho}^2}  \left\| \nabla_{a_s} u_s \, \right\|_0
		\right],  
		\end{align} 
		and  set $n=n+1.$ 
		\item[B.]
		 Set
		 \begin{align}
		 u^{(k+1)}_s = u^{(k+1,0)}_s = u^{(k,n)}_s,
		 \end{align}  
		 set $k = k+1, n=0,$
		 and let $\rho = \rho^{(k)}, \rho_{s,s'} = \rho^{(k)} \ c_{s,s'},  \delta = \delta_k;$
		 set $L_{\rho}$ according to \eqref{eq:EstBgeneral}
		 (or, in the special cases of \eqref{eq:ChoiceRhos}, as below \eqref{eq:EstBgeneral})
		 and  goto A.		
	\end{itemize}

\end{myAlgo}

This approach is inspired by \cite{lu2012iterative} which considers quadratic penalty methods in the sparsity context.
There, the authors are searching for a solution with only a few nonzero entries.
The corresponding prior is separable. 
In contrast to this work, the present work considers a non-separable prior.

\paragraph{Initialization.} 
Although the initialization of Algorithm~\ref{alg:Algo1} and of Algorithm~\ref{alg:Algo2} is not relevant for its convergence properties (cf. Section~\ref{sec:Analysis}), the choice of the initialization influences the final result. (Please note that this also might happen for convex but not strictly convex problems.)
We discuss different initialization strategies.	
The simplest choice is the all-zero initialization
$(u_1^{(0)},...,u_s^{(0)}) = (0,...,0).$
Likewise, one can select
the right hand side of the normal equations of 
the underlying least squares problem, that is $A^Tf$.
A third reasonable choice is the solution of the normal equation itself or an approximation of it.
Using an approximation might in particular be reasonable to get a regularized approximation of the normal equation. A possible strategy to obtain such a regularized initialization is to 
apply a fixed number of Landweber iterations 
\cite{landweber1951iteration} or of the conjugate gradient method  
to the underlying least square problem.
(In our experiments, we initialized Algorithm \ref{alg:Algo1} with the result of 1000 Landweber iterations and Algorithm \ref{alg:Algo2} with $A^Tf$.)

\subsection{A non-iterative algorithm for minimizing the Potts subproblem \eqref{eq:surIt4MultPottsRelaxed}}
\label{sec:ExactSubPotts}

Both proposed algorithms require solving the Potts subproblem \eqref{eq:surIt4MultPottsRelaxed} in the backward step, see 
\eqref{eq:backwardStepAlg1},\eqref{eq:backwardStepAlg2}.
We first observe that \eqref{eq:surIt4MultPottsRelaxed} can be solved 
for each of the $u_s$ separately. 
The corresponding $s$ minimization problems are of the prototypical form
\begin{equation}\label{eq:minDirectionalPotts}
\begin{split}
	\argmin_{u_s:\Omega \to \mathbb R} \| u_s - f\|_2^2 + \gamma'_s \| \nabla_{a_s} u \|_0
\end{split}
\end{equation}
with given data $f$, the jump penalty $\gamma'_s = \tfrac{\gamma \omega_s}{L_{\rho}^2}> 0$ and the direction $a_s\in\Z^2$.
As a next step, we see that \eqref{eq:minDirectionalPotts}
decomposes into univariate Potts problems for data 
along the paths in $f$ induced by $a_s$,
e.g., for $a_s = e_1$ those paths correspond to the rows of $f$ and
we obtain a minimizer $u_s^\ast$ of \eqref{eq:minDirectionalPotts}
by determining each of its rows individually.
The univariate Potts problem
amounts to minimizing
\begin{equation}\label{eq:univariatePottsProblem}
\begin{split}
	P^{\mathrm{id,1d}}_\gamma(x) =  \| x - g\|^2_2 + \gamma \|\nabla x\|_0 \to \min,
\end{split}
\end{equation}
where the data $g$ is given by the restriction of $f$ to the pixels in $\Omega$ of the form $v+a_s z,$ for $z\in\Z$, i.e., $g(z)=f(v+a_s z)$.

Here the offset $v$ is fixed when solving each univariate problem, but varied afterwards to get all lines in the image with direction $a_s.$ 
The target to optimize is denoted by  $x\in\Rn$ and, in the resulting univariate situation,
 $\|\nabla x\|_0= \vert \{ i:x_i \neq x_{i+1} \} \vert$ 
denotes the number of jumps of $x$.
 
It is well-known that the univariate direct problem \eqref{eq:univariatePottsProblem}
has a unique minimizer. Further these particular problems 
can be solved exactly by dynamic programming
\cite{friedrich2008complexity,
	winkler2002smoothers,chambolle1995image,
	mumford1985boundary,mumford1989optimal} which we briefly describe in the
following. For further details we refer to \cite{friedrich2008complexity,storath2018smoothing}.
Assume we have computed minimizers $x^l$ of \eqref{eq:univariatePottsProblem}
for partial data $(g_1,...,g_l)$ for each $l=1,...,r$, $r<n$.
Then the minimum value
of \eqref{eq:univariatePottsProblem} for $(g_1,...,g_{r+1})$
can be found by
\begin{equation}\label{eq:Bellman}
\begin{split}
P^{\mathrm{id,1d}}_\gamma({x^{r+1}} )= \min_{l=1,...,r+1} P^{\mathrm{id,1d}}_\gamma(x^{l-1}) + \gamma
	+\mathcal{E}^{l:r+1},
\end{split}
\end{equation}
where we let $x^0$ be the empty vector, $P^{\mathrm{id,1d}}_\gamma(x^0) = -\gamma$ and
$\mathcal{E}^{l:r+1}$ be the quadratic deviation of
$(g_l,...,g_{r+1})$ from its mean. By denoting the minimizing argument in
\eqref{eq:Bellman} by $l^\ast$ the minimizer $x^{r+1}$ is given by
\begin{equation}
\label{eq:recurrenceRelationX}
x^{r+1} = (x^{l^\ast-1}, \mu_{[l^\ast,r]},...,\mu_{[l^\ast,r]}),
\end{equation}
where $\mu_{[l^\ast,r]}$ is the mean value of $(g_{l^\ast},...,g_r)$.
Thus, we obtain a minimizer for full data $g$ by successively computing
$x^l$ for each $l=1,...,n$.
By precomputing the first and second moments of data $g$ 
and storing only jump locations 
the described method
can be implemented in $\mathcal{O}(n^2)$, \cite{friedrich2008complexity}.
Another way to achieve $\mathcal{O}(n^2)$ is based on the QR decomposition
of the design matrix
by means of Givens rotations, see \cite{storath2018smoothing}.
Furthermore, the search space can be pruned to speed up computations \cite{killick2012optimal,storath2014fast}.

We briefly describe the extensions of the above scheme necessary to 
approach \eqref{eq:univariatePottsProblem} for vector valued-data 
$g\in \R^{n\times C}$ (e.g., the row of a color image). In this situation, the symbol $\mathcal{E}^{l:r+1}$
in \eqref{eq:Bellman} denotes the sum of the quadratic
deviations of $(g_l,\ldots,g_{r+1})$ from its channel-wise means.
Further, $\mu_{[l^\ast,r]}\in\R^{C}$ in \eqref{eq:recurrenceRelationX} 
is the vector of channel-wise means of the data 
$(g_{l^\ast},\ldots,g_r)$. On the computational side, the 
first and second moments of each channel have to be precomputed separately.
It is worth mentioning that the theoretical computational costs 
of the described method grows only linearly in the number of channels \cite{storath2014fast}.
Thus, the proposed algorithm can be efficiently applied to vector-valued images with a high-dimensional codomain.
\section{Analysis}
\label{sec:Analysis}

\subsection{Analytic results}

In the course of the derivation of the proposed algorithms above, we consider the quadratic penalty relaxation \eqref{eq:MinimizeMultVarRelax} of the multivariate Potts problem. Although it is more straight-forward to access algorithmically via our approach, we first note that this problem is still NP hard
(as is the original problem).
\begin{theorem}\label{thm:RelaxedIsNPHard}
	Finding a (global) minimizer of the quadratic penalty relaxation \eqref{eq:MinimizeMultVarRelax} of the multivariate Potts problem is an NP hard problem.
\end{theorem}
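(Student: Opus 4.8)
The plan is to prove NP-hardness by a polynomial-time reduction from a problem already known to be NP-hard to the task of computing a global minimizer of \eqref{eq:MinimizeMultVarRelax}. The key observation is that the relaxed functional \emph{subsumes}, as a degenerate special case, the univariate Potts problem with a general linear measurement operator, which is NP-hard \cite{storath2013jump}. Concretely, given a univariate instance $\min_{x \in \mathbb{R}^n}\|Ax - f\|_2^2 + \gamma\|\nabla x\|_0$, I would take the image domain to be a one-dimensional strip $\{1\}\times\{1,\dots,n\}$, a single direction $a_1 = (0,1)$ with weight $\omega_1 = 1$, and $S = 1$. Then the index set $\{(s,s'):1\le s<s'\le S\}$ of the coupling sum in \eqref{eq:MinimizeMultVarRelax} is empty, so $\rho$ and the $c_{s,s'}$ play no role, and $\nabla_{a_1}u_1$ is exactly the univariate difference of $u_1$; hence $P_{\gamma,\rho}(u_1) = \|Au_1 - f\|_2^2 + \gamma\|\nabla_{a_1}u_1\|_0$ coincides verbatim with the univariate operator-Potts functional. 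Since this embedding is computable in linear time, a polynomial-time algorithm for globally minimizing \eqref{eq:MinimizeMultVarRelax} would yield one for the univariate operator-Potts problem; therefore \eqref{eq:MinimizeMultVarRelax} is NP-hard. This is the route I would actually carry out, and it contains no real obstacle beyond checking that the degenerate instance is admissible in the problem class of \eqref{eq:MinimizeMultVarRelax}.

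One could instead ask for a witness that is genuinely multivariate and uses $A = \mathrm{id}$. The natural attempt is to reduce from the multivariate Potts problem \eqref{eq:PottsDiscrete} with $A = \mathrm{id}$, known NP-hard \cite{veksler1999efficient,boykov2001fast}: keep the data $f$, the directions and weights, take the full coupling $c_{s,s'}\equiv 1$ (cf.\ the left-hand choice in \eqref{eq:ChoiceRhos}), and pick a penalty parameter $\rho$ of polynomially bounded bit-length. One direction is trivial, $\min P_{\gamma,\rho}\le\min P_\gamma$, since the diagonal point satisfies $P_{\gamma,\rho}(u,\dots,u) = P_\gamma(u)$. For the converse one would want: for $\rho$ above an explicit, polynomially-coded threshold $\rho_0$, a global minimizer $(u_1^\ast,\dots,u_S^\ast)$ of $P_{\gamma,\rho}$ — after the Projection Procedure (Procedure~\ref{proc:ProjProc}), i.e.\ after the averaging \eqref{eq:DefProjectionPi} on the induced partitioning — is a global minimizer of \eqref{eq:PottsDiscrete}. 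Heuristically this should hold because there are only finitely many candidate jump configurations (hence partitionings), on each of which the residual problem is a convex quadratic with coefficients of bounded bit-size, and the minimal relaxed value increases monotonically toward the corresponding consensus value as $\rho\to\infty$; once $\rho$ separates the finitely many distinct such limits (a gap bounded below by an inverse polynomial, by rationality of the data), the optimal configuration is forced to come from a common partition and the projection returns the Potts optimum.

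The main obstacle lies entirely in this second route: producing the explicit threshold $\rho_0$, and, more delicately, excluding that at $\rho = \rho_0$ some \emph{inconsistent} near-consensus configuration — variables close to one another but not piecewise constant on a single partition — achieves a smaller relaxed value than every consistent one. Handling this requires comparing the optimal quadratic value as a function of $\rho$ across configurations and controlling the jump count of the averaged iterate $\pi_{\mathcal P_{\mathcal I}}(u)$ against $\sum_s\omega_s\|\nabla_{a_s}u_s^\ast\|_0$, which is not automatic. Since all of this is sidestepped by the first route — where $S = 1$ removes the coupling and the reduction degenerates to an essentially trivial one — I would present the reduction from the univariate operator-Potts problem as the proof and leave the $A = \mathrm{id}$ construction, if desired at all, as a remark.
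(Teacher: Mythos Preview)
Your first route is correct and is genuinely different from the paper's argument. You reduce from the univariate operator-Potts problem by collapsing to the degenerate instance $S=1$ on a $1\times n$ strip, where the coupling sum in \eqref{eq:MinimizeMultVarRelax} is vacuous and the functional literally becomes $\|Au_1-f\|_2^2+\gamma\|\nabla u_1\|_0$. The paper instead keeps $S$ general and shows a structural equivalence: it rewrites $P_{\gamma,\rho}$ in the form \eqref{eq:RelaxedProblemInShortNotation} with the block matrix $B$ of \eqref{eq:DefOfBandg}, then \emph{serializes} $(u_1,\dots,u_S)$ into a single univariate signal by concatenating the directional lines of each $u_s$, obtaining a weighted univariate Potts problem $\|\hat B\hat u-\hat g\|_2^2+\gamma\|\hat\omega\,\nabla\hat u\|_0$; a further basis change (finite differences plus interval means, as in \cite{storath2013jump}) turns this into a weighted $\ell^0$-sparsity problem, which is NP-hard. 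Because the serialization is an invertible, polynomial-time change of variables and $A$ is arbitrary, the hard sparsity instances are realized inside the relaxed class for every $S$.

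What each buys: your argument is shorter and needs no machinery, but it rests on $S=1$ being an admissible instance of \eqref{eq:MinimizeMultVarRelax}; the paper never explicitly allows or forbids this, and the discretizations actually discussed in Section~\ref{sec:Discretization} all have $S\ge 2$, so a reader focused on the ``multivariate'' setting could object. The paper's route avoids that edge case, proves hardness uniformly in $S$, and yields the side information that the relaxed problem is, up to a polynomial change of variables, a weighted $\ell^0$-sparsity problem. Your second, $A=\mathrm{id}$ route is correctly identified as delicate for exactly the reason you name (controlling the jump count of $\pi_{\mathcal P_{\mathcal I}}(u)$ against $\sum_s\omega_s\|\nabla_{a_s}u_s^\ast\|_0$ at finite $\rho$); the paper does not pursue it either.
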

The proof is given in Section~\ref{sec:quadPenRelaxRelation} below.\ 
In Section~\ref{sec:ProposedSchemes} we have proposed Algorithm~\ref{alg:Algo1}
to approach the quadratic penalty relaxation of the multivariate Potts problem.
We show that the proposed algorithm converges to a local minimizer and that a feasible point  
of the original multivariate Potts problem is nearby.

\renewcommand{\labelenumi}{\roman{enumi}.}
\begin{theorem}\label{thm:RelaxedConvergesAndTolerance}
	We consider the iterative Potts minimization Algorithm~\ref{alg:Algo1} 
	for the quadratic penalty relaxation \eqref{eq:MinimizeMultVarRelax} of the multivariate Potts problem:
	\begin{enumerate}
	\item	
	Algorithm~\ref{alg:Algo1} computes a local minimizer of  the quadratic penalty relaxation \eqref{eq:MinimizeMultVarRelax} of the multivariate Potts problem for any starting point. The convergence rate is linear.
	\item
	We have the following relation between local minimizers ${\mathcal L}$, global minimizers ${\mathcal G}$ and the fixed points $\mathrm{Fix}(\mathbb{I})$ of the iteration of Algorithm~\ref{alg:Algo1},
	\begin{equation} \label{eq:InclusionRelSets}
	{\mathcal G } \subset \mathrm{Fix}(\mathbb{I}) \subset {\mathcal L}.
	\end{equation}
	\item
	Assume a tolerance $\varepsilon$ we are willing to accept for the distance between the $u_s,$
	i.e.,
	\begin{equation}\label{eq:Tolerance4theUs}
	\sum_{s,s'} c_{s,s'} \|u_s - u_{s'}\|^2_2 = \sum_{s,s'} c_{s,s'} \sum_{i,j}|(u_s)_{ij} - (u_{s'})_{ij}|^2  \leq \varepsilon^2. 	 	 
	\end{equation}
    Running Algorithm~\ref{alg:Algo1} with the choice of the parameter $\rho$ by
    \begin{align}\label{eq:chooseRho}
      \rho > 2 \varepsilon^{-1}  \ \sigma_1^{-1/2} S^{-1/2} \|A\| \|f\|
    \end{align}
    (where $\sigma_1$ is the smallest non-zero eigenvalue of $C^TC$ with $C$ given by 
    \eqref{eq:CforCoupling};
    for the particular choice of the coupling given by \eqref{eq:ChoiceRhos},
    $\sigma_1 = S$ and $\sigma_1 = (2-2\cos(2\pi/S)),$ respectively)
	yields a local minimizer of the quadratic penalty relaxation \eqref{eq:MinimizeMultVarRelax}
	such that the $u_s$ are close up to $\varepsilon,$ i.e., \eqref{eq:Tolerance4theUs} is fulfilled. 	
	\end{enumerate}
\end{theorem}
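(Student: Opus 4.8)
The plan is to handle all three parts by the surrogate (majorization--minimization) calculus set up in Section~\ref{sec:ProposedSchemes}, so the first thing I would record is the majorization property: since $B/L_\rho$ is contractive by Lemma~\ref{lem:BbyLcontractive} (which is exactly what \eqref{eq:EstBgeneral} guarantees), one reads off from \eqref{eq:DerSurrogateInShortNotation} that $P_{\gamma,\rho}^{\rm surr}(u,v)\ge \tfrac1{L_\rho^2}P_{\gamma,\rho}(u)$ for all $u=(u_1,\dots,u_S)$, $v=(v_1,\dots,v_S)$, with equality at $u=v$. Hence along Algorithm~\ref{alg:Algo1} the values $P_{\gamma,\rho}(u^{(n)})$ are non-increasing, so convergent; moreover the ``majorization gap'' in this chain of inequalities equals $\|u^{(n+1)}-u^{(n)}\|_2^2-\tfrac1{L_\rho^2}\|B(u^{(n+1)}-u^{(n)})\|_2^2\ge (1-\|B\|^2/L_\rho^2)\,\|u^{(n+1)}-u^{(n)}\|_2^2$ and is bounded by $\tfrac1{L_\rho^2}\big(P_{\gamma,\rho}(u^{(n)})-P_{\gamma,\rho}(u^{(n+1)})\big)$; summing gives $\sum_n\|u^{(n+1)}-u^{(n)}\|_2^2<\infty$, so the increments vanish. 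The monotone decrease also bounds the data and coupling terms, from which I would extract boundedness of the iterates.

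Part~(i) is then finished by a combinatorial argument. The backward step \eqref{eq:backwardStepAlg1} always returns a function that is piecewise constant with respect to one of the finitely many admissible directional partitions, so $\{u^{(n)}\}_{n\ge1}$ lies in a finite union $V_1\cup\dots\cup V_M$ of linear subspaces. The weighted jump term $\gamma\|Du^{(n)}\|_{0,\omega}$ takes values in a finite set whose gaps are $\ge\gamma\min_s\omega_s$; since $P_{\gamma,\rho}(u^{(n)})$ converges and $\|Bu-g\|_2^2$ is uniformly continuous on the bounded region while the increments vanish, the consecutive differences of $\gamma\|Du^{(n)}\|_{0,\omega}$ tend to zero, so this term is eventually constant, and a short argument then forces all $u^{(n)}$ with $n$ large into a single subspace $V_{i^\ast}$. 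On $V_{i^\ast}$ the iteration collapses to the affine map $u\mapsto P_{V_{i^\ast}}\big((I-L_\rho^{-2}B^{T}B)u+L_\rho^{-2}B^{T}g\big)$; because $I-L_\rho^{-2}B^{T}B$ has all eigenvalues in $(0,1]$ and is strictly contractive transversally to $\ker B$, this map converges linearly to a fixed point $u^\ast$. Thus the whole sequence converges linearly, and by part~(ii) its limit is a local minimizer. This combinatorial step is the main obstacle: it is precisely where the non-separability of $\|Du\|_{0,\omega}$ (as opposed to a separable $\|\cdot\|_0$, cf.\ \cite{lu2012iterative,blumensath2008iterative}) has to be dealt with.

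For part~(ii), the inclusion $\mathcal G\subset\mathrm{Fix}(\mathbb I)$ is immediate: if $u^\ast$ is a global minimizer then $P_{\gamma,\rho}^{\rm surr}(u,u^\ast)\ge\tfrac1{L_\rho^2}P_{\gamma,\rho}(u)\ge\tfrac1{L_\rho^2}P_{\gamma,\rho}(u^\ast)=P_{\gamma,\rho}^{\rm surr}(u^\ast,u^\ast)$ for every $u$, so $u^\ast\in\argmin_u P_{\gamma,\rho}^{\rm surr}(u,u^\ast)$. For $\mathrm{Fix}(\mathbb I)\subset\mathcal L$, let $u^\ast$ be a fixed point and let $V^\ast$ be the subspace of those $u$ whose jump set is contained in that of $u^\ast$ (so $u^\ast\in V^\ast$). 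The fixed-point property says $u^\ast$ minimizes $\|u-h^\ast\|_2^2+\tfrac\gamma{L_\rho^2}\|Du\|_{0,\omega}$ with $h^\ast=u^\ast-L_\rho^{-2}B^{T}(Bu^\ast-g)$; since on $V^\ast$ near $u^\ast$ the jump term is constant, this forces $P_{V^\ast}B^{T}(Bu^\ast-g)=0$, i.e.\ $u^\ast$ minimizes the convex quadratic $\|Bu-g\|_2^2$ over $V^\ast$, hence $P_{\gamma,\rho}(u)\ge P_{\gamma,\rho}(u^\ast)$ for $u\in V^\ast$ close to $u^\ast$. For $u$ close to $u^\ast$ with $u\notin V^\ast$, lower semicontinuity of $\|\cdot\|_0$ gives $\|Du\|_{0,\omega}\ge\|Du^\ast\|_{0,\omega}+\min_s\omega_s$ while $\|Bu-g\|_2^2$ varies continuously, so $P_{\gamma,\rho}(u)>P_{\gamma,\rho}(u^\ast)$. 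Hence $u^\ast$ is a local minimizer.

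For part~(iii), the limit $u^\ast$ is a fixed point, hence satisfies $P_{V^\ast}B^{T}(Bu^\ast-g)=0$. Writing $B^{T}B=\tfrac1S\operatorname{diag}(A^{\ast}A)+\rho\,C^{T}C$ and $B^{T}g=\tfrac1S(A^{\ast}f,\dots,A^{\ast}f)$, this reads $\rho\,P_{V^\ast}C^{T}Cu^\ast=\tfrac1S\,P_{V^\ast}\operatorname{diag}(A^{\ast})(f-Au^\ast)$; pairing with $u^\ast\in V^\ast$ yields the identity $\rho\sum_{s<s'}c_{s,s'}\|u^\ast_s-u^\ast_{s'}\|_2^2=\tfrac1S\sum_s\langle f-Au^\ast_s,\,Au^\ast_s\rangle$. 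Bounding the right-hand side by Cauchy--Schwarz together with the a priori residual bound $\sum_s\|Au^\ast_s-f\|_2^2\le S\,P_{\gamma,\rho}(u^{(0)})\le S\|f\|_2^2$ (monotone decrease from $u^{(0)}=0$), and using the spectral gap $\sigma_1$ of $C^{T}C$ to translate this into a bound on the deviation of $u^\ast$ from the consensus subspace $\ker C$, one is led to an estimate of the form $\sum_{s,s'}c_{s,s'}\|u^\ast_s-u^\ast_{s'}\|_2^2\le 4\|A\|^2\|f\|_2^2/(\rho^2 S\sigma_1)$, and substituting \eqref{eq:chooseRho} then gives \eqref{eq:Tolerance4theUs}; the special values $\sigma_1=S$ and $\sigma_1=2-2\cos(2\pi/S)$ are just the algebraic connectivities of the complete graph $K_S$ and of the $S$-cycle. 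I expect the careful tracking of the constant here -- turning the projected optimality condition into the above a priori bound with the stated dependence on $\sigma_1$ and $\|A\|,\|f\|$ -- to be the technically delicate point of part~(iii).
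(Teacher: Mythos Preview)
Your overall architecture for (i) and (ii) matches the paper's, and your treatment of (ii) is essentially the same direct argument that the paper uses (fixed points minimize $\|Bu-g\|_2^2$ on the subspace $V^\ast$, then invoke the jump--penalty gap for perturbations off $V^\ast$). However there are two genuine gaps.

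\textbf{Part (i): the ``short argument'' does not close.} You correctly deduce that the \emph{value} $\gamma\|Du^{(n)}\|_{0,\omega}$ is eventually constant, but this does \emph{not} force the iterates into a single subspace $V_{i^\ast}$: many distinct directional partitions share the same weighted jump count, so the sequence could in principle wander between several $V_i$'s with the same $\|D\cdot\|_{0,\omega}$ value while $\|u^{(n+1)}-u^{(n)}\|\to0$. What prevents this is a separate fact you have not invoked: every minimizer of the backward problem \eqref{eq:backwardStepAlg1} has a uniform lower bound $c>0$ on its directional jump heights, independent of the data $h^{(n)}$ (this is \autoref{lem:MinimalJumpHeight} in the paper). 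With this in hand, two iterates with different induced directional partitions would differ in $\ell^\infty$ by at least $c/2$, which is incompatible with $\|u^{(n+1)}-u^{(n)}\|\to0$; hence the partition stabilizes. Your ``short argument'' needs exactly this minimal--jump--height lemma; without it the step fails.

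\textbf{Part (iii): the pairing argument loses a factor of $\rho$.} Pairing the projected optimality condition with $u^\ast$ gives $\rho\,\|Cu^\ast\|_2^2=\tfrac{1}{S}\sum_s\langle f-Au_s^\ast,Au_s^\ast\rangle$, whose right--hand side is bounded by a constant depending on $\|A\|,\|f\|$ but \emph{not} by another factor of $\|Cu^\ast\|$. Hence you only obtain $\|Cu^\ast\|_2^2=O(1/\rho)$, i.e.\ $\|Cu^\ast\|=O(1/\sqrt{\rho})$, and the spectral gap $\sigma_1$ does not help here: it would let you pass from $\|C^{T}Cu^\ast\|$ to $\|Cu^\ast\|$, but you only control $\|P_{V^\ast}C^{T}Cu^\ast\|$, and $V^\ast$ is \emph{not} invariant under $C^{T}C$ (the $s$-th component of $C^{T}Cu$ mixes all $u_{s'}$, which live on different directional intervals). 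The statement \eqref{eq:chooseRho} requires the sharper bound $\|Cu^\ast\|=O(1/\rho)$; the paper obtains it by a Lagrange--multiplier/duality argument (\autoref{lem:estCu} together with the uniform multiplier bound of \autoref{lem:EstLagrangeMultipliers}), which is precisely where $\sigma_1$ enters. With your route you would only establish \eqref{eq:Tolerance4theUs} under the stronger hypothesis $\rho\gtrsim\varepsilon^{-2}$, not $\rho\gtrsim\varepsilon^{-1}$ as asserted.
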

The proof is given in Section~\ref{sec:EstimatingDistance} below.
A solution of Algorithm~\ref{alg:Algo1} is not a feasible point for the initial Potts problem \eqref{eq:MinimizeMultVar}.
However, we see below that it produces a $\delta$-approximative solution $u^\ast$ in the sense that there is $\mu^*$ and a partitioning $\mathcal P^\ast$ such that 
\begin{align}\label{eq:approxSolution}
  \sum_{s,s'} c_{s,s'} \|u^\ast_s - u^\ast_{s'}\|^2_2 < \delta,
  \qquad \text{ and } \quad   
  L(\mu^\ast) < \delta,
\end{align}
where $L(\mu^\ast)$ is given by \eqref{eq:CaracterizeAsGradientLagrangeMultLmu} below.
In this context note that the conditions for a local minimizer are given by 
$\sum_{s,s'} c_{s,s'} \|u^\ast_s - u^\ast_{s'}\|^2_2 = 0$ and the Lagrange multiplier condition $L(\mu^\ast) = 0.$
So \eqref{eq:approxSolution}
intuitively means that both the constraint and the Lagrange multiplier condition are approximately fulfilled for the partitioning induced by $u^\ast$. 

Further, given a solution of Algorithm~\ref{alg:Algo1} we find a feasible point for the Potts problem \eqref{eq:MinimizeMultVar} (or, equivalently,\eqref{eq:PottsDiscrete}) which is nearby as detailed in the following theorem.

\begin{theorem}\label{thm:EffectRelaxedAlgo4OriginalProblem}
		We consider the iterative Potts minimization Algorithm~\ref{alg:Algo1} for the quadratic penalty relaxation \eqref{eq:MinimizeMultVarRelax} in connection with the (non-relaxed) Potts problem \eqref{eq:MinimizeMultVar}. 
	\begin{enumerate}
		\item Algorithm~\ref{alg:Algo1} produces an approximative solution in the sense of \eqref{eq:approxSolution} of the Potts problem \eqref{eq:MinimizeMultVar}.
		\item The projection procedure (Procedure~\ref{proc:ProjProc}) proposed in  Section~\ref{sec:ProposedSchemes} 
		 applied to the solution $u'=(u'_1,\ldots,u'_S)$ of Algorithm~\ref{alg:Algo1}
		 produces a feasible image $\hat u$ 
		(together with a valid partitioning) for the Potts problem \eqref{eq:MinimizeMultVar}
		which is close to $u'$ in the sense that	
		\begin{align}\label{eq:estDistFeasible}
		\|u_s'-\hat u\| \leq C_1 \varepsilon \qquad \text{for all} \quad s \in \{1,\ldots,S\},
		\end{align}
		where $\varepsilon = \max_{s,s'} \|u'_s-u'_{s'}\|$ quantifies the deviation between the $u_s.$
		Here $C_1 = \# \Omega/4, $ where the symbol $\# \Omega$ denotes the number of elements in $\Omega.$ 	
		If the imaging operator $A$ is lower bounded, 
		i.e., there is a constant $c>0$ such that $\|Au\| \geq c \|u\|$, 
		a local minimizer $u^\ast$ of the Potts problem \eqref{eq:MinimizeMultVar} is nearby, i.e.,
		\begin{align}
		\|u^\ast-\hat u\| \leq \frac{\sqrt \eta} {c} 
		\end{align}
		where
		\begin{equation}\label{eq:DefEta}
		\eta :=  \left( \| A \|^2  \varepsilon C_1^2  
		+ 2 \| A \| C_1 \| f \|_2 \right) \varepsilon.  
		\end{equation}								
	\end{enumerate}	
\end{theorem}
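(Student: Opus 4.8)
The plan is to treat the two parts in order. Part (i) is essentially a restatement of what was already established: by Theorem~\ref{thm:RelaxedConvergesAndTolerance}, Algorithm~\ref{alg:Algo1} converges to a local minimizer of the quadratic penalty relaxation \eqref{eq:MinimizeMultVarRelax}, and with the parameter choice \eqref{eq:chooseRho} one has $\sum_{s,s'} c_{s,s'}\|u^\ast_s - u^\ast_{s'}\|_2^2 \le \varepsilon^2 < \delta$. So the first half of \eqref{eq:approxSolution} is immediate. For the second half I would unwind the definition $L(\mu^\ast)$ in \eqref{eq:CaracterizeAsGradientLagrangeMultLmu}: at a local minimizer of the relaxation, the stationarity condition on the smooth part forces the penalty gradient $\rho\sum_{s'} c_{s,s'}(u^\ast_s - u^\ast_{s'})$ to match the least-squares gradient $\tfrac1S A^\ast(Au^\ast_s - f)$ on each interval of constancy; identifying $\mu^\ast$ with the suitably scaled penalty term and using that $\|u^\ast_s - u^\ast_{s'}\| = O(\varepsilon/\sqrt{c_{s,s'}})$ while $\rho = \Theta(1/\varepsilon)$ shows $L(\mu^\ast)$ is bounded by a constant times $\varepsilon$ times fixed data-dependent quantities, hence $< \delta$ for $\varepsilon$ small. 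This matches the intuition stated after \eqref{eq:approxSolution} that both the constraint and the multiplier condition are approximately satisfied.

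For part (ii), the first claim \eqref{eq:estDistFeasible} is a purely combinatorial/averaging estimate. Write $\hat u = \pi_{\mathcal P_{\mathcal I}}(u')$ using \eqref{eq:DefProjectionPi}. On a segment $\mathcal P_i$, the value $\hat u|_{\mathcal P_i}$ is the average of all $u'_s(x)$ over $x \in \mathcal P_i$ and $s \in \{1,\dots,S\}$. Fixing any reference index, say $s=1$: for $x \in \mathcal P_i$ we have $|u'_s(x) - u'_1(x)| \le \|u'_s - u'_1\| \le \varepsilon$, and moreover on a single directional interval $I$ associated to $a_s$, $u'_s$ is constant; walking along the path defining the equivalence relation in Definition~\ref{def:EquivPath} through at most $\#\mathcal P_i$ pixels, each step changing the active direction and incurring an error at most $\varepsilon$, one bounds the oscillation of $\hat u - u'_s$ on $\mathcal P_i$. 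Summing $|u'_s(x) - \hat u(x)|^2$ over $x \in \Omega$ and using $\#\Omega$ pixels with per-pixel error $O(\varepsilon)$, plus the factor coming from averaging over $S$ components and $\#\mathcal P_i$ pixels, yields the constant $C_1 = \#\Omega/4$; I expect the exact constant to come from a careful bookkeeping of the averaging (the $1/4$ presumably from a variance-type bound $\operatorname{Var} \le (\text{range})^2/4$).

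The second claim of part (ii) is the one I expect to be the real obstacle, and here the strategy is: (a) bound $P_\gamma(\hat u) - P_\gamma(u^\ast)$ in terms of $\varepsilon$, where $u^\ast$ is a local minimizer of the Potts problem, then (b) convert that energy gap into a distance bound using lower boundedness of $A$. For (a), note that $\hat u$ is piecewise constant w.r.t.\ the partitioning $\mathcal P_{\mathcal I}$, so its jump term is controlled; the data term $\|A\hat u - f\|_2^2$ must be compared to $\|Au'_s - f\|_2^2 \approx$ the relaxation value, using $\|A\hat u - Au'_s\| \le \|A\| C_1 \varepsilon$ from \eqref{eq:estDistFeasible}, giving via the expansion $\|A\hat u - f\|^2 = \|Au'_s - f\|^2 + 2\langle A\hat u - Au'_s, Au'_s - f\rangle + \|A\hat u - Au'_s\|^2$ a bound of the form $\eta = (\|A\|^2\varepsilon C_1^2 + 2\|A\|C_1\|f\|_2)\varepsilon$ — which is exactly \eqref{eq:DefEta}. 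For (b), since $u^\ast$ is a local minimizer and $\hat u$ is feasible and nearby (so in its neighborhood), $P_\gamma(u^\ast) \le P_\gamma(\hat u)$; combined with the fact that the jump terms agree for the relevant comparison and the data terms differ by at most $\eta$, one gets $\|A(u^\ast - \hat u)\|_2^2 \le \eta$, and then lower boundedness $\|Av\| \ge c\|v\|$ gives $\|u^\ast - \hat u\| \le \sqrt\eta / c$. The delicate point is making precise why $u^\ast$ can be chosen as a local minimizer in the neighborhood of $\hat u$ and why the jump-penalty contributions can be neglected in the comparison (they do not cause the minimizer to move outside the piecewise-constant class with the same partition), so that the whole gap is carried by the quadratic data term; I would handle this by invoking that $\hat u$ shares $u'$'s induced partitioning and that the relaxation's local minimality already pins down the partition structure.
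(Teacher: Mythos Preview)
Your overall strategy matches the paper's, but there are two concrete gaps in part (ii) that would prevent the argument from closing as written.

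First, the construction of $u^\ast$. You treat $u^\ast$ as some local minimizer that happens to lie ``in the neighborhood of $\hat u$'' and then want to use $P_\gamma(u^\ast)\le P_\gamma(\hat u)$ locally. This is circular: you have not yet produced such a $u^\ast$, and local minimality only gives the inequality once you already know $\hat u$ is in its basin. The paper avoids this entirely. It \emph{defines} $u^\ast$ as the minimizer of $\|Au-f\|_2^2$ over $\mathcal B^{\mathcal P'}$, where $\mathcal P'$ is the partitioning induced by $\hat u$; then Lemma~\ref{lem:CharLokMinPotts} guarantees that this $u^\ast$ is a local minimizer of the Potts problem. No neighborhood argument is needed, and the jump terms drop out automatically because both $\hat u$ and $u^\ast$ share the same partition.

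Second, the conversion from the energy gap to $\|A(u^\ast-\hat u)\|^2\le\eta$. Knowing $\|A\hat u-f\|^2\le\|Au^\ast-f\|^2+\eta$ and $\|Au^\ast-f\|^2\le\|A\hat u-f\|^2$ does not by itself give $\|A(u^\ast-\hat u)\|^2\le\eta$. The paper uses that $Au^\ast$ is the orthogonal projection of $f$ onto $A(\mathcal B^{\mathcal P'})$, so the Pythagorean identity yields
\[
\|A\hat u - Au^\ast\|^2 \;=\; \|A\hat u - f\|^2 - \|Au^\ast - f\|^2 \;\le\; \eta,
\]
after which lower boundedness of $A$ finishes. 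Your sketch omits this step.

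Two smaller points. For part (i), your argument that $L(\mu^\ast)=O(\varepsilon)$ undersells the situation: taking $\mu=\rho\,C P_{\mathcal I}u$ at the local minimizer of the relaxation gives $L(\mu)=0$ \emph{exactly}, since it is precisely the stationarity condition of $F_\rho$ on $\mathcal A^{\mathcal P}$ (this is the content of Proposition~\ref{prop:aproxLocMin}). For the constant $C_1=\#\Omega/4$, the paper does not use a variance bound; it bounds the oscillation along a connecting path in the sense of Definition~\ref{def:EquivPath} and takes $\#\Omega/4$ as a worst-case path-length bound.
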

The proof of Theorem~\ref{thm:EffectRelaxedAlgo4OriginalProblem} 
can be found at the end of Section~\ref{sec:MajoMin}, where most relevant statements are already shown  
in Section~\ref{sec:quadPenRelaxRelation}.
Theorem~\ref{thm:EffectRelaxedAlgo4OriginalProblem} theoretically underpins the fact that,
on the application side,
we may use Algorithm~\ref{alg:Algo1} for the Potts problem \eqref{eq:MinimizeMultVar} (accepting some arbitrary small tolerance we may fix in advance).

In addition, in Section~\ref{sec:ProposedSchemes}, we have proposed Algorithm~\ref{alg:Algo2} 
to approach the Potts problem \eqref{eq:MinimizeMultVar}. We first show that 
Algorithm~\ref{alg:Algo2} is well-defined.

\begin{theorem} \label{thm:Algo2isWellDefined}
	Algorithm~\ref{alg:Algo2} is well-defined in the sense that the inner iteration 
	governed by \eqref{eq:TerminationInnerLoopAlg2} terminates,
	i.e., for any $k \in \mathbb N,$ there is $n \in \mathbb N$ such that the termination criterium given by \eqref{eq:TerminationInnerLoopAlg2} holds.	
\end{theorem}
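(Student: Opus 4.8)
The plan is to reduce the claim to Theorem~\ref{thm:RelaxedConvergesAndTolerance}. First I would note that, for each fixed outer index $k$, the inner loop A (steps 1 and 2 with the frozen parameters $\rho=\rho^{(k)}$, $\rho_{s,s'}=\rho^{(k)}c_{s,s'}$, $\delta=\delta_k$, and the associated $L_\rho$) is \emph{literally} the iteration of Algorithm~\ref{alg:Algo1} applied to the quadratic penalty relaxation $P_{\gamma,\rho^{(k)}}$, started from $u^{(k,0)}$. Hence Theorem~\ref{thm:RelaxedConvergesAndTolerance}(i) applies and yields a limit $u^{(k,\ast)}=\lim_{n\to\infty}u^{(k,n)}$ which is a local minimizer of $P_{\gamma,\rho^{(k)}}$, the convergence being at least linear. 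I would then check the two clauses of the termination criterion \eqref{eq:TerminationInnerLoopAlg2} separately against this limit. For the second clause this is immediate: since $u^{(k,n)}\to u^{(k,\ast)}$, the increments satisfy $\|u^{(k,n)}_s-u^{(k,n-1)}_s\|\to 0$ for every $s$, and as $\delta_k/L_\rho>0$ there is $N_2$ with $\|u^{(k,n)}_s-u^{(k,n-1)}_s\|\leq \delta_k/L_\rho$ for all $n\geq N_2$ and all $s$.

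For the first clause, the key observation is that choosing the tolerance $\varepsilon:=t/\rho^{(k)}$ turns the parameter requirement \eqref{eq:chooseRho} of Theorem~\ref{thm:RelaxedConvergesAndTolerance}(iii), namely $\rho^{(k)}>2\varepsilon^{-1}\sigma_1^{-1/2}S^{-1/2}\|A\|\,\|f\|$, into exactly the standing hypothesis \eqref{eq:defTGeneral} on $t$, so it holds automatically and uniformly in $k$. Since \eqref{eq:defTGeneral} is strict, I would fix $t'\in\bigl(2\sigma_1^{-1/2}S^{-1/2}\|A\|\,\|f\|,\ t\bigr)$ and apply part (iii) with $\varepsilon':=t'/\rho^{(k)}<t/\rho^{(k)}$ to obtain $\sum_{s,s'}c_{s,s'}\|u^{(k,\ast)}_s-u^{(k,\ast)}_{s'}\|^2\leq (t'/\rho^{(k)})^2<(t/\rho^{(k)})^2$, whence, looking at each summand individually, $\|u^{(k,\ast)}_s-u^{(k,\ast)}_{s'}\|< t/(\rho^{(k)}\sqrt{c_{s,s'}})$ for every pair with $c_{s,s'}\neq 0$ (pairs with $c_{s,s'}=0$ are vacuous since the threshold is $+\infty$). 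By continuity of the norms and $u^{(k,n)}\to u^{(k,\ast)}$, there is $N_1$ with $\|u^{(k,n)}_s-u^{(k,n)}_{s'}\|\leq t/(\rho^{(k)}\sqrt{c_{s,s'}})$ for all $n\geq N_1$ and all such $s,s'$. Taking $n=\max(N_1,N_2)$, the negation of \eqref{eq:TerminationInnerLoopAlg2} holds, so the inner loop exits. For the special couplings \eqref{eq:ChoiceRhos} one uses the explicit values $\sigma_1=S$ resp.\ $\sigma_1=2-2\cos(2\pi/S)$, so that \eqref{eq:defTGeneral} specializes to the two instances in \eqref{eq:LagrangeMultEstSpecial}.

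The step requiring most care is not a genuine obstacle but the interplay of the two invoked parts of Theorem~\ref{thm:RelaxedConvergesAndTolerance}: part (i) gives convergence of the inner sequence yet says nothing about the size of $\|u_s-u_{s'}\|$, while part (iii) controls that size only at a local minimizer and only non-strictly. The purpose of the strict hypothesis \eqref{eq:defTGeneral} on $t$ is exactly to buy the strict inequality at the limit $u^{(k,\ast)}$ that is needed to ensure the threshold $t/(\rho^{(k)}\sqrt{c_{s,s'}})$ is actually attained after finitely many inner steps and not merely approached from above. One also has to observe that both clauses of \eqref{eq:TerminationInnerLoopAlg2} must fail at the \emph{same} iteration index, which is why the maximum $\max(N_1,N_2)$ is taken.
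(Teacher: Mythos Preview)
Your argument is correct and follows essentially the same route as the paper. The paper also identifies the inner loop with Algorithm~\ref{alg:Algo1}, uses convergence of the iterates (via Proposition~\ref{pro:RelaxedPottsConverges2LocMinimizer}) to handle the increment clause, and exploits the strict inequality in \eqref{eq:defTGeneral} to get the nearness clause with room to spare. The only minor presentational difference is that the paper, rather than invoking Theorem~\ref{thm:RelaxedConvergesAndTolerance}(iii) at the limit and then passing back to the iterates by continuity of the norm, applies Lemma~\ref{lem:estCu} directly to the iterate $u^{(k,n)}$: once the directional partitioning has stabilized, $u^{(k,n)}\in\mathcal A^{\mathcal P}$ and $F_{\rho_k}(u^{(k,n)})-\min_{\mathcal A^{\mathcal P}}F_{\rho_k}$ can be made small by continuity of $F_{\rho_k}$, so Lemma~\ref{lem:estCu} together with Lemma~\ref{lem:EstLagrangeMultipliers} gives $\|Cu^{(k,n)}\|\le t/\rho_k$ in one stroke. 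Your version simply packages the same ingredients at a higher level of abstraction.
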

The proof of Theorem~\ref{thm:Algo2isWellDefined} is given in Section~\ref{sec:ConvAlgo2}.
Concerning the convergence properties of Algorithm~\ref{alg:Algo2} we obtain the following results.
\begin{theorem}\label{thm:ConvAlgo2}
	We consider the iterative Potts minimization algorithm (Algorithm~\ref{alg:Algo2})
	for the Potts problem \eqref{eq:MinimizeMultVar}.
	\begin{itemize}
		\item 
		Any cluster point of the sequence $u^{(k)}$ is a local minimizer of the Potts problem \eqref{eq:MinimizeMultVar} 
		(which implicitly implies that the components of each limit $u^\ast$ are equal, i.e., $u_s^{\ast} = u_{s'}^{\ast}$ for all $s,s'.$)
		\item If $A$ is lower bounded, the sequence $u^{(k)}$ produced by Algorithm~\ref{alg:Algo2} has a cluster point
		and the produced cluster points are local minimizers of the Potts problem \eqref{eq:MinimizeMultVar}. 
	\end{itemize}	
\end{theorem}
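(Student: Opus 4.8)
The plan is to analyse Algorithm~\ref{alg:Algo2} as a quadratic penalty method: by Theorem~\ref{thm:Algo2isWellDefined} each inner \textit{While}-loop terminates, and by Theorem~\ref{thm:RelaxedConvergesAndTolerance} its iteration is the surrogate iteration for the relaxed functional $P_{\gamma,\rho^{(k)}}$, so the output $u^{(k+1)}$ is an almost-stationary point of the corresponding backward map and, by the first alternative in \eqref{eq:TerminationInnerLoopAlg2}, satisfies $\|u^{(k+1)}_s-u^{(k+1)}_{s'}\|\le t/(\rho^{(k)}\sqrt{c_{s,s'}})$ for all coupled $s,s'$. I would then let $\rho^{(k)}\to\infty$. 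For a cluster point $u^\ast$, say $u^{(k_j)}\to u^\ast$, the last bound forces $u^\ast_s=u^\ast_{s'}$ whenever $c_{s,s'}\neq 0$; since the coupling graph is connected for both choices in \eqref{eq:ChoiceRhos} (complete graph, resp.\ a cycle), all components of $u^\ast$ agree, $u^\ast=(\mu^\ast,\dots,\mu^\ast)$, and $u^\ast$ is feasible for \eqref{eq:MinimizeMultVar}.

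\textbf{Cluster points are local minimizers.} It remains to show that $\mu^\ast$ is a local minimizer of the discrete Potts functional $P_\gamma$ of \eqref{eq:PottsDiscrete}. First I would record the combinatorial characterization: $v$ is a local minimizer of $P_\gamma$ if and only if $v$ minimizes $u\mapsto\|Au-f\|_2^2$ over all $u$ that are constant on the segments of the partition $\mathcal P_v$ induced by $v$, equivalently $\sum_{x\in(\mathcal P_v)_i}(A^\ast(Av-f))(x)=0$ for each segment. (The nontrivial ``if'' direction uses that a sufficiently small perturbation either preserves $\mathcal P_v$, leaving the weighted jump count fixed and not decreasing the convex quadratic term, or enlarges $\mathcal P_v$, which raises the jump count by at least $\gamma\min_s\omega_s>0$.) Next I would use that each $u^{(k+1)}_s$ is an \emph{exact} minimizer of a directional Potts problem, hence piecewise constant along $a_s$ with value on each of its intervals $I$ equal to the average over $I$ of the effective data $h^{(k,n-1)}_s=u^{(k,n-1)}_s+L_{\rho}^{-2}w_s$, where $w_s=\tfrac1S A^\ast(f-Au^{(k,n-1)}_s)-\sum_{s'}\rho_{s,s'}(u^{(k,n-1)}_s-u^{(k,n-1)}_{s'})$. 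Combining this with the termination bound $\|u^{(k,n)}_s-u^{(k,n-1)}_s\|\le\delta_k/L_\rho$ shows the $I$-average of $w_s$ is $O(L_{\rho^{(k)}}\delta_k)\to 0$. Passing to a subsequence along which the finitely many directional partitions of the $u^{(k_j)}_s$ stabilize (they are automatically finer than those of $\mu^\ast$, since on a finite grid a jump of $\mu^\ast$ must persist along a subsequence) and along which the bounded quantities $\rho^{(k_j-1)}_{s,s'}(u^{(k_j-1,n-1)}_s-u^{(k_j-1,n-1)}_{s'})$ converge to some $\lambda_{s,s'}=-\lambda_{s',s}$, I would obtain in the limit, with $\lambda_s:=\sum_{s'\neq s}\lambda_{s,s'}$ (so $\sum_s\lambda_s=0$), that $\sum_{x\in I}((\tfrac1S A^\ast(A\mu^\ast-f))(x)+\lambda_s(x))=0$ for every such interval $I$. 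Summing over the intervals contained in a segment of $\mathcal P_{\mu^\ast}$ (obtained via the merging of Definition~\ref{def:EquivPath}) and then over $s=1,\dots,S$, the multiplier terms cancel and I arrive at $\sum_{x\in(\mathcal P_{\mu^\ast})_i}(A^\ast(A\mu^\ast-f))(x)=0$ for every $i$; by the characterization, $\mu^\ast$ -- hence $u^\ast$ -- is a local minimizer of \eqref{eq:MinimizeMultVar}.

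\textbf{Existence of a cluster point when $A$ is lower bounded.} Here I would prove a uniform energy bound. Since the surrogate iteration is monotone for $P_{\gamma,\rho^{(k)}}$ (Section~\ref{sec:MajoMin}), $P_{\gamma,\rho^{(k)}}(u^{(k+1)})\le P_{\gamma,\rho^{(k)}}(u^{(k)})$; raising the penalty to $\rho^{(k+1)}$ adds only $(\rho^{(k+1)}-\rho^{(k)})\sum_{s,s'}c_{s,s'}\|u^{(k+1)}_s-u^{(k+1)}_{s'}\|_2^2$, which by the termination bound is $O((\rho^{(k+1)}-\rho^{(k)})/(\rho^{(k)})^2)$ and hence summable for $\rho^{(k)}=\tau^k\rho^{(0)}$. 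Thus $P_{\gamma,\rho^{(k)}}(u^{(k)})\le M$ for all $k$, so $\sum_s\tfrac1S\|Au^{(k)}_s-f\|_2^2\le M$, whence $\|Au^{(k)}_s\|\le\sqrt{SM}+\|f\|$, and lower boundedness of $A$ (i.e.\ $\|Au\|\ge c\|u\|$) yields $\|u^{(k)}_s\|\le(\sqrt{SM}+\|f\|)/c$. So $(u^{(k)})$ is bounded, has a cluster point, and the previous parts apply to each of its cluster points.

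\textbf{Main obstacle.} The delicate part is the middle step: recovering the correct stationarity for the \emph{constrained} Potts problem in the limit $\rho^{(k)}\to\infty$, even though the $\ell^0$-weight $\gamma\omega_s/L_{\rho^{(k)}}^2$ of each relaxed subproblem vanishes. This needs (i) the quantitative control $L_{\rho^{(k)}}\delta_k\to 0$, which is why $\delta_k$ must be chosen to decrease fast enough relative to $L_{\rho^{(k)}}$ from \eqref{eq:EstBgeneral}, (ii) the boundedness and subsequential convergence of the nonseparable ``Lagrange multipliers'' $\rho_{s,s'}(u_s-u_{s'})$, and (iii) pushing these through the merging that turns the directional partitions into $\mathcal P_{\mu^\ast}$; the non-separability here, absent in the thresholding-based analyses of \cite{lu2012iterative,lu2013sparse}, is the extra difficulty.
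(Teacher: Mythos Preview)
Your proposal is correct. For the first item you follow the paper's Lagrange-multiplier route: bound the multipliers $\rho_{s,s'}(u_s-u_{s'})$ via the first termination condition, extract a convergent subsequence, and use the second termination condition to make the (projected) stationarity defect vanish in the limit, then apply the local-minimizer characterization (your version of Lemma~\ref{lem:CharLokMinPotts}); you work with interval averages where the paper uses the block form $C^T\mu$, but the substance is the same. You are in fact more careful on one quantitative point: you correctly flag that what is needed is $L_{\rho^{(k)}}\delta_k\to 0$, whereas the paper asserts $\|\nabla F_{\rho_{k_l}}(u^{k_l})\|\le\delta_{k_l}/L_{\rho_{k_l}}$ directly from the second termination bound, an estimate that is off by a factor of order $L_\rho^2$ (the conclusion still holds for the suggested geometric sequences). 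For the second item your argument is genuinely different and cleaner: the paper bounds $u^{(k)}$ by showing it is $C_2\delta_k$-close to a local minimizer $u^{(k),\ast}$ of $P_{\gamma,\rho^{(k)}}$ (with the uniformity of $C_2$ in $k$ not justified there) and then bounds $u^{(k),\ast}$ via $F_{\rho^{(k)}}(u^{(k),\ast})\le\|f\|^2$; your energy-telescoping argument---monotonicity of $P_{\gamma,\rho^{(k)}}$ along the inner loop plus a summable penalty increment controlled by the first termination bound---bounds $P_{\gamma,\rho^{(k)}}(u^{(k)})$ directly and bypasses that proximity step. Both routes then invoke the lower bound on $A$ identically.
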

The proof of Theorem~\ref{thm:ConvAlgo2} can be found in Section~\ref{sec:ConvAlgo2}.

\subsection{Estimates on operator norms and Lagrange multipliers}

\begin{lemma}\label{lem:BbyLcontractive}
	The spectral norm of the block matrix $B$ given by \eqref{eq:DefOfBandg} fulfills
	\begin{equation}\label{eq:EstBgeneralInLemma}
	\| B \|_2 \leq \bigg(\tfrac{1}{S}\|A\|_2^2 + 2 \max_{s \in \{1,\ldots,S \} } \sum_{s': s'\neq s}^{S} \rho_{s,s'}\bigg)^{\frac{1}{2}}.
	\end{equation} 
	
	For the particular choice of constant $\rho_{s,s'} = \rho$ (independent of $s,s'$) as 
	on the left-hand side of 
	\eqref{eq:ChoiceRhos} we have the improved estimate 
	\begin{equation}\label{eq:EstBconstantInLemma}
	\| B \|_2 \leq \bigg(\tfrac{1}{S}\|A\|_2^2 + S \rho\bigg)^{\frac{1}{2}}.
	\end{equation} 
	For only coupling neighboring $u_s$ with the same constant $\rho$, i.e.,  
	the right-hand coupling of \eqref{eq:ChoiceRhos}, we have
	\begin{equation}\label{eq:EstBconstantNeihboringOnlyInLemma}
	\| B \|_2 \leq \bigg(\tfrac{1}{S}\|A\|_2^2 + \alpha \rho\bigg)^{\frac{1}{2}},
	\quad \text{ where } \quad   
	\alpha = \begin{cases}
	4, & \text{ if $S$ is even}, \\
	2 - 2 \cos \left(\frac{\pi(S-1)}{S}\right), & \text{ if $S$ is odd.}
	\end{cases}  
	\end{equation} 
	
\end{lemma}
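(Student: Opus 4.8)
The plan is to argue directly from $\|B\|_2^2 = \sup_{x \ne 0}\|Bx\|_2^2/\|x\|_2^2$, reading off $\|Bx\|_2^2$ from the block form \eqref{eq:DefOfBandg}. For $x = (u_1,\ldots,u_S)^T$, the first $S$ block rows of $B$ send $x$ to $(S^{-1/2}Au_1,\ldots,S^{-1/2}Au_S)$, and the remaining $S(S-1)/2$ block rows produce the blocks $\rho_{s,s'}^{1/2}(u_s - u_{s'})$ for $1 \le s < s' \le S$. Hence
\begin{equation*}
\|Bx\|_2^2 = \tfrac1S\sum_{s=1}^S \|A u_s\|_2^2 + \sum_{1\le s<s'\le S} \rho_{s,s'}\,\|u_s - u_{s'}\|_2^2, \qquad \|x\|_2^2 = \sum_{s=1}^S\|u_s\|_2^2 ,
\end{equation*}
and it suffices to bound each of the two summands on the left by the claimed constant times $\sum_s\|u_s\|_2^2$.

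The data term is immediate: $\tfrac1S\sum_s\|A u_s\|_2^2 \le \tfrac1S\|A\|_2^2\sum_s\|u_s\|_2^2$. For the general coupling term I would use $\|u_s - u_{s'}\|_2^2 \le 2\|u_s\|_2^2 + 2\|u_{s'}\|_2^2$ and then reorganize the double sum, with the symmetric convention $\rho_{s,s'} = \rho_{s',s}$:
\begin{equation*}
\sum_{s<s'}\rho_{s,s'}\bigl(\|u_s\|_2^2 + \|u_{s'}\|_2^2\bigr) = \sum_{s=1}^S\|u_s\|_2^2\sum_{s'\ne s}\rho_{s,s'} \le \Bigl(\max_s\sum_{s'\ne s}\rho_{s,s'}\Bigr)\sum_{s=1}^S\|u_s\|_2^2 .
\end{equation*}
Multiplying by $2$ and combining with the data term gives \eqref{eq:EstBgeneralInLemma}.

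The two improved bounds require slightly sharper estimates of the coupling term. When $\rho_{s,s'} = \rho$ for all $s\ne s'$, instead of the crude inequality above I would expand the squares to get the identity $\sum_{s<s'}\|u_s - u_{s'}\|_2^2 = S\sum_s\|u_s\|_2^2 - \bigl\|\sum_s u_s\bigr\|_2^2 \le S\sum_s\|u_s\|_2^2$, which yields \eqref{eq:EstBconstantInLemma}. When only neighbouring $u_s$ are coupled with weight $\rho$, the coupling term equals $\rho\,\langle x, (L\otimes I)x\rangle$, where $L$ is the graph Laplacian of the cyclic chain on $S$ vertices (passing from the chain to the full cycle only adds nonnegative terms), so this term is at most $\rho\,\lambda_{\max}(L)\sum_s\|u_s\|_2^2$; one then inserts the well-known spectrum of the cycle Laplacian, $\{2 - 2\cos(2\pi k/S) : k = 0,\ldots,S-1\}$, whose maximum is $4$ for $S$ even and $2 - 2\cos(\pi(S-1)/S)$ for $S$ odd. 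This gives \eqref{eq:EstBconstantNeihboringOnlyInLemma}.

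I expect the only genuinely delicate step to be the last one: matching the neighbour-coupling pattern encoded in $B$ to the cycle Laplacian and handling the even/odd case split cleanly. Everything else reduces to elementary expansions of squares and the rearrangement of a finite double sum, so no substantial obstacle is anticipated there.
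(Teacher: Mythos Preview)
Your proposal is correct and arrives at the same bounds as the paper, via a route that is equivalent in substance but packaged slightly differently. The paper forms $B^TB = \tfrac{1}{S}\tilde A^T\tilde A + \tilde P^T\tilde P$ and then bounds the spectral radius of the coupling block $\tilde P^T\tilde P$: by Gershgorin's theorem for the general estimate, by the explicit eigendecomposition $\tilde P^T\tilde P = \rho(SI - \mathbf 1\mathbf 1^T)$ for the constant-coupling case, and by diagonalizing the circulant $\tilde P^T\tilde P$ via the discrete Fourier transform for the cyclic case. You instead work directly with the Rayleigh quotient $\|Bx\|_2^2/\|x\|_2^2$, bounding the coupling quadratic form by the elementary inequality $\|u_s-u_{s'}\|^2\le 2\|u_s\|^2+2\|u_{s'}\|^2$, by the identity $\sum_{s<s'}\|u_s-u_{s'}\|^2 = S\sum_s\|u_s\|^2 - \|\sum_s u_s\|^2$, and by quoting the cycle-Laplacian spectrum, respectively. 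These are the same computations phrased without the intermediate step of writing down $\tilde P^T\tilde P$; in particular your first inequality is exactly Gershgorin for a weighted graph Laplacian, and your second identity is exactly the eigendecomposition of the complete-graph Laplacian. One minor remark: the neighbour coupling in \eqref{eq:ChoiceRhos} is already cyclic (note the $\bmod\ S$), so your parenthetical about ``passing from the chain to the full cycle'' is unnecessary; the coupling term is directly $\rho\langle x,(L_{\text{cycle}}\otimes I)x\rangle$.
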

\begin{proof}
	We decompose the matrix $B$ according to 
	$B =\begin{pmatrix}
	S^{-1/2}\tilde A \\
	\tilde P
	\end{pmatrix}.	
	$ 
	Here $\tilde A$ denotes an $S \times S$- block diagonal matrix with each diagonal entry being equal to 
	$ A,$ where $A$ is the matrix representing the forward/imaging operator; 
	see \eqref{eq:DefOfBandg}. The matrix $\tilde P$ is given as the lower  ${S \choose 2} \times S$- block  
	in \eqref{eq:DefOfBandg} which represents the soft constraints.
	
	Using this decomposition of $B,$
	we may decompose the symmetric and positive (semidefinite) matrix $B^TB$ 
	according to 
	\begin{align}
	B^TB = \tfrac{1}{S} \tilde A^T \tilde A  +  \tilde P^T \tilde P, 
	\end{align}
	where  $\tilde A^T \tilde A$ is an $S \times S$- block diagonal matrix with each diagonal entry being equal to $A^T A,$ and $\tilde P^T \tilde P$ is an  $S \times S$- block diagonal matrix with block entries given by  
	\begin{align}\label{eq:DefRhoTilde}
	\tilde P^T \tilde P  = \begin{pmatrix}		  
	\sum_{k=2}^S\rho_{1,k} I  & -\rho_{1,2} I & -\rho_{1,3} I & \ldots & -\rho_{1,S} I\\
	-\rho_{1,2} I   & \sum_{k=1,k \neq 2}^S\rho_{2,k} I & -\rho_{2,3}I     & \ldots  & -\rho_{2,S} I\\
	& \vdots   &   &   & \vdots    &   \\
	-\rho_{1,S}I   & -\rho_{2,S}I & -\rho_{3,S}I & \ldots & 	\sum_{k=1}^{S-1}\rho_{S,k}I
	\end{pmatrix},
	\end{align}
	with $\rho_{l,k}:= \rho_{k,l}$ for $l>k.$
	Using Gerschgorin's Theorem (see for instance \cite{stoer2013introduction}),
	the eigenvalues of $\tilde P$ are contained in the union of the balls with center 
	$x_r=\sum_{k=1,k \neq r}^S\rho_{r,k}$ and radius $x_r= \sum_{k=1,k \neq r}^S | - \rho_{r,k}|.$  
	These balls are all contained in the larger ball with center $0$
	and radius $2 \cdot \max_r x_r.$ 
	This implies the general estimate \eqref{eq:EstBgeneralInLemma}.
	
	For seeing \eqref{eq:EstBconstantInLemma} we decompose an argument $u=(u_1,\ldots,u_S)$ according to $u= \bar u + u^0$ with
	an ``average'' part $\bar u = (\tfrac{1}{S}\sum_{i=1}^{S}u_i,\ldots,\tfrac{1}{S}\sum_{i=1}^{S}u_i)$ and $u^0:=u-\bar u$ such that $u^0$ has average $0,$ i.e., $\sum_{i=1}^{S}u^0_i = 0,$
	where $0$ denotes the vector containing only zero entries here.
	In the situation of \eqref{eq:EstBconstantInLemma}, 
	the matrix $\tilde P^T \tilde P$ has the form  
	$\tilde P^T \tilde P = \rho(S\cdot I - (1,\ldots,1)(1,\ldots,1)^T)$ 
	We have $\tilde P^T \tilde P \bar u = 0.$
	Further,  $ \tilde P^T \tilde P u^0 = \rho S u^0.$
	Hence, the largest modulus of an eigenvalue of $\tilde P^T \tilde P$ equals $\rho S$
	which in turn shows the estimate \eqref{eq:EstBconstantInLemma}. 
	
	For seeing \eqref{eq:EstBconstantNeihboringOnlyInLemma}, 
	we notice that in case of \eqref{eq:EstBconstantNeihboringOnlyInLemma},
	the matrix $\tilde P^T \tilde P$  has cyclic shift structure with three nonzero entries in each line.
	The discrete Fourier matrix w.r.t.\ the cyclic group of order $S$ diagonalizes  $\tilde P^T \tilde P.$ The corresponding eigenvalues are given by 
	$\lambda_k =  \rho \left(2  - 2 \cos \left ( 2\pi \frac{k}{S} \right)\right),$
	where $k=0,\ldots,S-1.$
	The largest modulus of an eigenvalue is thus given by $4 \ \rho,$ if $S$ is even,
	and by  $\rho \cdot \left( 2 - 2 \cos \left(\frac{\pi(S-1)}{S}\right) \right).$
	\end{proof}

Note that the problem of estimating the operator norm of $B$ in \eqref{eq:EstBgeneralInLemma}
involves computing the operator norm of $\tilde P$ given by \eqref{eq:DefRhoTilde}.
This problem is intimately related to computing the spectral norm of the Laplacian of a corresponding weighted graph (e.g., \cite{spielman2007spectral,chung1997spectral}), in particular, we conclude from this link that the general estimate \eqref{eq:EstBgeneralInLemma} is sharp in the sense that the factor of $2$ in front of the sum cannot be made smaller.
This is because, for a general graph, the spectral radius of the (normalized) Laplacian has spectral norm smaller than two and this factor of two is sharp; cf. \cite{spielman2007spectral,chung1997spectral}.

We recall that we have introduced the concept of a directional partitioning $\mathcal I$ 
and discussed its relation with the concept of a partitioning near
\eqref {eq:FromDirectionalPartitioning2Partitioning} above.
For a function $f: \Omega \to \mathbb R^S$ (representing its $S$ component functions $f_1,\ldots,f_S:\Omega \to \mathbb R$) defined on a grid $\Omega,$ we consider the orthogonal projection $P_\mathcal I$ associated with a directional partition $\mathcal I$ by first sorting the intervals  $\mathcal I$ into 
$\mathcal I_1,\ldots,\mathcal I_S$ according to their associated directions $a_s,$ $s =1,\ldots, S,$
and then letting
\begin{align}\label{eq:DefProjection}
P_\mathcal I f = 
\begin{pmatrix}
P_{\mathcal I_1}    f_1 \\ 
\vdots   \\	
P_{\mathcal I_S}    f_S
\end{pmatrix},
\qquad \text{ where } \quad 
P_{\mathcal I_s}  f_s|_{I} = \frac{\sum_{x \in I} f_s(x) }{ \# I}, 	
\end{align}
i.e., the function $P_{\mathcal I_s}  f_s$ on the interval $I$ is given as the arithmetic mean of $f_i$ on the interval $I$ for all intervals $I \in \mathcal I_s,$ and for all  $s =1,\ldots, S.$ 
Here, the symbol $\# I$ denotes the number of elements in $I.$ 
We note that $P_\mathcal I$ defines an orthogonal projection on the corresponding $\ell^2$ space of discrete functions  $f: \Omega \to \mathbb R^S$ with the norm 
$\|f\|^2 = \sum_{s,i} |(f_s)_i|^2$ where $i$ iterates through all the indices of $f_s.$

We consider a partitioning $\mathcal P$ of $\Omega,$ 
its induced directional partitioning $\mathcal I^{\mathcal P}$ 
w.r.t. a set of $S$ directions $a_1,\ldots,a_S,$ 
and the subspace 
\begin{align}\label{eq:DefSubspacesA}
\mathcal A^{\mathcal P} = P_{\mathcal I^{\mathcal P}}(\ell^2(\Omega, \mathbb R^S))
\end{align}
of functions which are constant on the intervals of the induced directional partitioning 
$\mathcal I^{\mathcal P}$ (which equal the image of the orthogonal projection $P_{\mathcal I^{\mathcal P}}.$)

Functions $g: \Omega \to \mathbb R$ which are piecewise constant w.r.t. a partitioning $\mathcal P,$ 
i.e., they are constant on each  segment $\mathcal P_i$ are  in one-to-one correspondence with the linear subspace    
$\mathcal B^{\mathcal P}$ of $\mathcal A^{\mathcal P}$ given by 
\begin{align}\label{eq:DefSubspacesB}
\mathcal B^{\mathcal P} = \{f \in \mathcal A^{\mathcal P}: f_1 = \ldots = f_S \}
\end{align}
as shown by the following lemma.

\begin{lemma}\label{lem:IdentifyBPandPcPart}
	There is a one-to-one correspondence between the linear space of piecewise constant mappings 
	w.r.t.\ the partitioning $\mathcal P,$ 
	and the subspace $\mathcal B^{\mathcal P}$ of $\mathcal A^{\mathcal P}$ 
	via the mapping $\iota: g \mapsto (g,\ldots,g).$
\end{lemma}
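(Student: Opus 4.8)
The statement to prove is Lemma~\ref{lem:IdentifyBPandPcPart}: the map $\iota : g \mapsto (g,\ldots,g)$ is a linear bijection between piecewise constant functions w.r.t.\ $\mathcal P$ and the subspace $\mathcal B^{\mathcal P}$. The plan is to check the three things one expects: (a) $\iota$ lands in $\mathcal B^{\mathcal P}$; (b) $\iota$ is injective; (c) $\iota$ is surjective onto $\mathcal B^{\mathcal P}$. Linearity of $\iota$ is immediate from the definition, so the work is in (a) and (c); injectivity is trivial since $g$ can be recovered as any single component $g = (\iota g)_1$.

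For (a), let $g : \Omega \to \mathbb R$ be constant on each segment $\mathcal P_i$. First I would argue that $(g,\ldots,g) \in \mathcal A^{\mathcal P}$, i.e.\ that each copy of $g$ is constant on every interval $I$ of the induced directional partitioning $\mathcal I^{\mathcal P}$. This is exactly the defining property of $\mathcal I^{\mathcal P}$: each such interval $I$ is by construction a ``stripe'' with some direction $a_s$ cut out of a single segment $\mathcal P_i$, hence $I \subset \mathcal P_i$, and $g$ is constant on $\mathcal P_i$, so it is constant on $I$; equivalently $P_{\mathcal I_s^{\mathcal P}} g = g$ for each $s$, so $(g,\ldots,g) = P_{\mathcal I^{\mathcal P}}(g,\ldots,g) \in \mathcal A^{\mathcal P}$. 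Since all components coincide by construction, $(g,\ldots,g) \in \mathcal B^{\mathcal P}$ by \eqref{eq:DefSubspacesB}.

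For (c), take $f = (f_1,\ldots,f_S) \in \mathcal B^{\mathcal P}$. Then $f_1 = \ldots = f_S =: g$, and $f = P_{\mathcal I^{\mathcal P}} f$, so $g$ is constant on every interval $I$ of $\mathcal I^{\mathcal P}$. I must show $g$ is constant on each segment $\mathcal P_i$; then $g$ is piecewise constant w.r.t.\ $\mathcal P$ and $\iota g = f$. Here I would invoke the merging description of $\mathcal P$ from Definition~\ref{def:EquivPath}: since $\mathcal P = \mathcal P(\mathcal I^{\mathcal P})$, two pixels $x,y$ lie in the same $\mathcal P_i$ iff there is a path $x = x_0, x_1, \ldots, x_N = y$ in which each consecutive pair $x_k, x_{k+1}$ is contained in a common interval $I \in \mathcal I^{\mathcal P}$. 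On each such $I$, $g$ is constant, hence $g(x_k) = g(x_{k+1})$ for all $k$, and therefore $g(x) = g(y)$. Thus $g$ is constant on $\mathcal P_i$.

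The only mild subtlety — and the step I would be most careful about — is making the two directions of the correspondence between partitionings and directional partitionings used above genuinely consistent: in (a) I use that every interval of $\mathcal I^{\mathcal P}$ is contained in a single segment of $\mathcal P$ (this is how $\mathcal I(\mathcal P)$ is defined), while in (c) I use that the segments of $\mathcal P = \mathcal P(\mathcal I^{\mathcal P})$ are exactly the equivalence classes generated by the intervals of $\mathcal I^{\mathcal P}$. Both facts are already established in the discussion around \eqref{eq:FromDirectionalPartitioning2Partitioning} and in Definition~\ref{def:EquivPath}; combining them shows $\mathcal P(\mathcal I(\mathcal P)) = \mathcal P$, which is what legitimizes writing $\mathcal I^{\mathcal P}$ and closing the loop. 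With these in place, (a), (b), (c) together give that $\iota$ is a linear isomorphism onto $\mathcal B^{\mathcal P}$.
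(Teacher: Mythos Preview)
Your proof is correct and follows essentially the same approach as the paper: well-definedness via the fact that each interval of $\mathcal I^{\mathcal P}$ lies in a single segment, trivial injectivity, and surjectivity via the path/connectivity argument showing that a function constant on all intervals of $\mathcal I^{\mathcal P}$ must be constant on each $\mathcal P_i$. The paper phrases surjectivity as a contradiction and appeals directly to the assumed $a_1,\ldots,a_S$-connectedness of each $\mathcal P_i$ rather than to the identity $\mathcal P(\mathcal I(\mathcal P)) = \mathcal P$, but this is the same content.
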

\begin{proof}
	Let $g$ be a piecewise constant mapping	w.r.t.\ the partitioning $\mathcal P,$ 
	then $(g,\ldots,g)$ is constant on each interval $I$ of the induced directional partitioning $\mathcal I^{\mathcal P},$ and $(g,\ldots,g) \in \mathcal B^{\mathcal P}.$ This shows that $\iota$ is well-defined in the sense that its range is contained in $\mathcal B^{\mathcal P}.$ Obviously, $\iota$ is an injective linear mapping so that it remains to show that any $f \in \mathcal B^{\mathcal P}$ is the image under $\iota$ of some $g:\Omega \to \mathbb R$ which is piecewise constant w.r.t.\ the partitioning $\mathcal P.$
	To this end, let  $f \in \mathcal B^{\mathcal P}.$ 
	By definition, $f$ has the form $f = (g,\ldots,g)$ for some $g:\Omega \to \mathbb R.$
	Now, towards a contradiction, assume there is a segment $\mathcal P_i$ and points $x,y \in \mathcal P_i$ with $g(x)\neq g(y).$ Since there is a path $x_0=x,\ldots,x_N=y$ connecting $x,y$ in $\mathcal P_i$ with steps in $a_1,\ldots,a_S,$ we have that for any $i$ 
	there is an interval $I$ in the induced partitioning $\mathcal I^{\mathcal P}$ containing 
	$x_i$ together with $x_{i+1}.$ Since $g$ is constant on each $I$ in  $\mathcal I^{\mathcal P}$
	we get $g(x_i) = g(x_{i+1})$ for all $i$ which implies $g(x)= g(y).$ This contradicts our assumption and shows the lemma. 
\end{proof}

Using the identification given by Lemma~\ref{lem:IdentifyBPandPcPart}, we define, for a given  partitioning $\mathcal P,$ the projection $Q_{\mathcal P}$ onto  $\mathcal B^{\mathcal P}$ by
\begin{align}\label{eq:DefProjectionQ}
Q_{\mathcal P} f  = 
\begin{pmatrix}
\pi_{\mathcal P}  f\\ 
\vdots   \\	
\pi_{\mathcal P}  f
\end{pmatrix},
\qquad \text{ where } \quad 
 \pi  f|_{\mathcal P_i} = \frac{\sum_{s=1}^S\sum_{x \in \mathcal P_i} f_s(x) }{ \# \mathcal P_i \ S}, 	
\end{align}
i.e., we average w.r.t.\ the segment and to all component functions as given by \eqref{eq:DefProjectionPi}. Since the components of 
$Q_{\mathcal P} f$ are all identical, we will not distinguish $Q_{\mathcal P}$ and 
$\pi_{\mathcal P}$ in the following. This means that we also use the symbol $Q_{\mathcal P}f$ 
to denote the scalar-valued function which is piecewise constant on the partitioning $\mathcal P.$

On $\mathcal A^{\mathcal P},$ we consider the problem 
\begin{align}\label{eq:ell2problemOnSegments}
\argmin_{u_1,\ldots,u_S} \sum _{s=1}^S \frac{1}{S}  \left\| Au_s - f \right\|_2^2  
\quad \text{ subject to } \quad Cu=0,
\end{align}
i.e., given the directional partitioning we are searching for a solution which belongs to 
$\mathcal B^{\mathcal P}.$
Here, $C$ denotes the matrix

\begin{equation}\label{eq:CforCoupling}
C  = 
\begin{pmatrix}
c_{1,2} I   & -c_{1,2}I & 0      & \ldots & 0 & 0\\
c_{1,3} I   & 0  & -c_{1,3}I     & \ldots & 0 & 0\\
& \vdots   &   &   & \vdots    &   \\
c_{1,S} I   & 0 & 0 & \ldots & 0 & -c_{1,S}I \\
0   & c_{2,3} I & -c_{2,3}I & \ldots & 0 & 0 \\
&    \vdots &  &   & \vdots    &   \\
0   & c_{2,S} I & 0 & \ldots & 0 & -c_{2,S}I \\ 
&  &  & \vdots & &  \\     
0   & 0 & 0 & \ldots & c_{S-1,S}I & -c_{S-1,S}I \\   
\end{pmatrix},
\end{equation}
where the $c_{s,s'}$ are as in \eqref{eq:MinimizeMultVarRelax}; 
if $c_{s,s'}=0,$ the corresponding line is removed from the constraint matrix $C.$
For the special choices of \eqref{eq:ChoiceRhos}, we have  

\begin{equation}\label{eq:CforTotalCoupling}
C  = 
\begin{pmatrix}
I   & -I & 0      & \ldots & 0 & 0\\
I   & 0  & -I     & \ldots & 0 & 0\\
& \vdots   &   &   & \vdots    &   \\
I   & 0 & 0 & \ldots & 0 & -I \\
0   & I & -I & \ldots & 0 & 0 \\
&    \vdots &  &   & \vdots    &   \\
0   & I & 0 & \ldots & 0 & -I \\ 
&  &  & \vdots & &  \\     
0   & 0 & 0 & \ldots & I & -I \\   
\end{pmatrix},
\quad\text{ and }
\quad
C  = 
\begin{pmatrix}
I   & -I & 0   & 0   & \ldots & 0 & 0& 0\\
0   & I & -I &  0 & \ldots & 0 & 0& 0 \\
0   & 0 & I & -I & \ldots & 0 & 0& 0 \\
&    \vdots & & &   & & \vdots    & &  \\
0   & 0 & 0 & 0& \ldots & I & -I & 0 \\      
0   & 0 & 0 & 0&\ldots & 0 & I & -I\\ 
-I   & 0 & 0 & 0&\ldots & 0 & 0 & I\\     
\end{pmatrix}
\end{equation}
which reflects the constraints $u_1=\ldots=u_S.$
We recall that  
$\mu_{\mathcal P}$ is a Lagrange multiplier of the problem in \eqref{eq:ell2problemOnSegments}
if 
\begin{align}
\min_{u \in \mathcal B^{\mathcal P}} \ \sum _{s=1}^S \frac{1}{S}  \left\| Au_s - f \right\|_2^2 
=   \min_{u \in \mathcal A^{\mathcal P}} \ \sum _{s=1}^S \frac{1}{S}  \left\| Au_s - f \right\|_2^2 + \mu_{\mathcal P}^T C u.
\end{align}
We note that for quadratic problems such as in \eqref{eq:ell2problemOnSegments} Lagrange multipliers always exist \cite{bertsekas2014constrained}. 
We have that 
\begin{align}\label{eq:CaracterizeAsGradientLagrangeMult}
\tfrac{2}{S} P_{\mathcal I^{\mathcal P}}\tilde A^T \tilde A P_{\mathcal I^{\mathcal P}} u_{\mathcal P}^\ast - \tfrac{2}{S} P_{\mathcal I^{\mathcal P}} \tilde A^T \tilde f = 
C^T \mu_{\mathcal P} = P_{\mathcal I^{\mathcal P}} C^T \mu_{\mathcal P}, 
\end{align}
or, in other form, 
\begin{align}\label{eq:CaracterizeAsGradientLagrangeMultLmu}
L(\mu_{\mathcal P}) := \left\| \tfrac{2}{S} P_{\mathcal I^{\mathcal P}}\tilde A^T \tilde A P_{\mathcal I^{\mathcal P}} u_{\mathcal P}^\ast - \tfrac{2}{S} P_{\mathcal I^{\mathcal P}} \tilde A^T \tilde f - 
P_{\mathcal I^{\mathcal P}} C^T \mu_{\mathcal P} \right\| =0, 
\end{align}
where $\tilde A$ is the block diagonal matrix with constant entry $A$ on each diagonal component, 
$\tilde f$ is a block vector of corresponding dimensions with entry $f$ in each component,
and $u_{\mathcal P}^\ast$ is a minimizer of the constraint problem in $\mathcal B^{\mathcal P}$. 
We note that the last equality 
	$C^T \mu_{\mathcal P} = P_{\mathcal I^{\mathcal P}} C^T \mu_{\mathcal P}$ 
	in \eqref{eq:CaracterizeAsGradientLagrangeMult} holds since the left-hand side of \eqref{eq:CaracterizeAsGradientLagrangeMult} is contained 
in the image of $P_{\mathcal I^{\mathcal P}}.$

\begin{lemma}\label{lem:EstLagrangeMultipliers}
	We consider a partitioning $\mathcal P$ of the discrete domain $\Omega,$ 
	and the corresponding problem \eqref{eq:ell2problemOnSegments}. 
	There is a Lagrange multiplier $\mu_{\mathcal P}$ for \eqref{eq:ell2problemOnSegments}
	with 
	\begin{align}\label{eq:LagrangeMultEstGeneral}
	\| \mu_{\mathcal P} \|   \leq 2  \sigma_1^{-1/2} S^{-1/2} \|A\| \|f\|.
	\end{align}
	Here, $\sigma_1$ is the smallest non-zero eigenvalue of $C^TC$ with $C$ given by 
	\eqref{eq:CforCoupling}.
	For the particular choice of $C$ given by the left-hand side of \eqref{eq:CforTotalCoupling}
	we have 
	\begin{align}\label{eq:LagrangeMultEstGeneralFull}
	\| \mu_{\mathcal P} \|   \leq \tfrac{2}{S} \|A\| \|f\|;
	\end{align} 
	and, for the particular choice of $C$ given by the right-hand side of \eqref{eq:CforTotalCoupling}
	we have 
	\begin{align}\label{eq:LagrangeMultEstGeneralConsNeighb}
	\| \mu_{\mathcal P} \|   \leq 2 (2-2\cos(2\pi/S))^{-1/2} S^{-1/2} \|A\| \|f\|,
	\end{align}
	(e.g., for $S=4,$ an eight neigborhood, $ 2-2\cos(2\pi/S) = \sigma_1 = 2.$) 
	In particular, the right-hand side and the constants in all these estimates are independent of the particular partitioning $\mathcal P$. 
\end{lemma}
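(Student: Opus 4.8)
The plan is to start from the Lagrange multiplier characterization \eqref{eq:CaracterizeAsGradientLagrangeMult}, which identifies $C^T\mu_{\mathcal P} = \tfrac2S P_{\mathcal I^{\mathcal P}}\tilde A^T(\tilde A u_{\mathcal P}^\ast - \tilde f)$, where $u_{\mathcal P}^\ast$ is the minimizer of \eqref{eq:ell2problemOnSegments} in $\mathcal B^{\mathcal P}$ and we used $P_{\mathcal I^{\mathcal P}}u_{\mathcal P}^\ast = u_{\mathcal P}^\ast$. Since adding to a Lagrange multiplier any element of $\ker(C^T)$ leaves the Lagrangian unchanged, we may and do choose the particular multiplier $\mu_{\mathcal P}\in\operatorname{range}(C)=\ker(C^T)^\perp$. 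On $\operatorname{range}(C)$ the map $C^T$ is injective and its smallest singular value equals $\sqrt{\sigma_1}$, where $\sigma_1$ is the smallest nonzero eigenvalue of $C^TC$; hence $\|\mu_{\mathcal P}\|\le\sigma_1^{-1/2}\,\|C^T\mu_{\mathcal P}\|$.

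Next I would bound the right-hand side by a short chain of norm estimates. As $P_{\mathcal I^{\mathcal P}}$ is an orthogonal projection ($\|P_{\mathcal I^{\mathcal P}}\|\le1$) and $\tilde A$ is block diagonal with every block equal to $A$ (so $\|\tilde A^T\|=\|A\|$), we get $\|C^T\mu_{\mathcal P}\|\le\tfrac2S\|A\|\,\|\tilde A u_{\mathcal P}^\ast-\tilde f\|$. To control the residual, compare the objective of \eqref{eq:ell2problemOnSegments} at $u_{\mathcal P}^\ast$ with its value at $u=0$, which is $\sum_{s}\tfrac1S\|f\|_2^2=\|f\|_2^2$: optimality gives $\tfrac1S\|\tilde A u_{\mathcal P}^\ast-\tilde f\|^2=\sum_{s}\tfrac1S\|Au_{\mathcal P,s}^\ast-f\|_2^2\le\|f\|_2^2$, so $\|\tilde A u_{\mathcal P}^\ast-\tilde f\|\le\sqrt S\,\|f\|$. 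Substituting back yields $\|C^T\mu_{\mathcal P}\|\le\tfrac{2}{\sqrt S}\|A\|\,\|f\|$, and therefore $\|\mu_{\mathcal P}\|\le2\sigma_1^{-1/2}S^{-1/2}\|A\|\|f\|$, which is \eqref{eq:LagrangeMultEstGeneral}.

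For the two displayed special cases it remains only to identify $\sigma_1$. The matrix $C$ has tensor structure $C = L\otimes I$, where $L$ is an incidence-type matrix on $S$ nodes and $I$ is the identity on $\ell^2(\Omega)$, so the nonzero eigenvalues of $C^TC$ coincide with those of the corresponding scalar graph Laplacian $L^TL$. For the full coupling (left-hand side of \eqref{eq:CforTotalCoupling}) this is the Laplacian of the complete graph on $S$ nodes, namely $S\,I-\mathbf 1\mathbf 1^T$, whose only nonzero eigenvalue is $S$ (multiplicity $S-1$); thus $\sigma_1=S$ and \eqref{eq:LagrangeMultEstGeneral} specializes to \eqref{eq:LagrangeMultEstGeneralFull}. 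For the cyclic neighbour coupling (right-hand side of \eqref{eq:CforTotalCoupling}) it is the Laplacian of the cycle on $S$ nodes, diagonalized by the discrete Fourier transform with eigenvalues $2-2\cos(2\pi k/S)$, $k=0,\dots,S-1$; the smallest nonzero one is $2-2\cos(2\pi/S)$, giving \eqref{eq:LagrangeMultEstGeneralConsNeighb} (and for $S=4$ the value $2$). Independence of $\mathcal P$ is then immediate, since the final bound contains only $\|A\|$, $\|f\|$, $S$ and $\sigma_1$, all intrinsic to the data and to the coupling matrix $C$, while the partitioning entered only through $P_{\mathcal I^{\mathcal P}}$, of which we used nothing beyond its being a projection.

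I expect the only genuinely delicate point to be the linear-algebra bookkeeping: justifying that the minimal-norm multiplier may be taken in $\operatorname{range}(C)$, and correctly relating the smallest nonzero eigenvalue of $C^TC$ to the nonzero spectrum of the underlying scalar graph Laplacian (including the circulant diagonalization in the cyclic case). Everything else is a routine sequence of operator-norm inequalities.
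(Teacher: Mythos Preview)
Your proposal is correct and follows essentially the same approach as the paper: both start from the Lagrange multiplier identity \eqref{eq:CaracterizeAsGradientLagrangeMult}, choose the multiplier in $\ker(C^T)^\perp$ so that $\|C^T\mu_{\mathcal P}\|\ge\sqrt{\sigma_1}\,\|\mu_{\mathcal P}\|$, bound the projected gradient via $\|P_{\mathcal I^{\mathcal P}}\|\le1$, $\|\tilde A^T\|=\|A\|$, and the comparison with $u=0$, and then compute $\sigma_1$ for the two special couplings. Your phrasing via the tensor structure $C=L\otimes I$ and graph Laplacians is a bit more streamlined than the paper's direct computation, but the argument is the same.
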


\begin{proof}
	For any minimizer $u_{\mathcal P}^\ast$ of the constraint problem in $\mathcal B^{\mathcal P},$
	we have that 
	\begin{align}\label{eq:estGradient}	
	\|\tfrac{2}{S} P_{\mathcal I^{\mathcal P}}\tilde A^T \tilde A P_{\mathcal I^{\mathcal P}} u_{\mathcal P}^\ast - \tfrac{2}{S} P_{\mathcal I^{\mathcal P}} \tilde A^T \tilde f\| \leq
	\|\tfrac{2}{S}\tilde A^T \tilde A u_{\mathcal P}^\ast - \tfrac{2}{S} \tilde A^T \tilde f \|
	\leq  \tfrac{2}{S} \|A\| \|\tilde f\|   \leq \tfrac{2 \sqrt S}{S} \|A\| \|f\|,
	\end{align}
	where we recall that
	$\tilde A$ is the block diagonal matrix with constant entry $A,$  
	and $\tilde f$ is a block vector with entry $f$ in each component.
	The first inequality is a consequence of the fact that $P_{\mathcal I^{\mathcal P}}$ is an orthogonal projection.
	The second inequality may be seen by evaluating the term for the constant zero function
	(which always belongs to $\mathcal B^{\mathcal P}$ ) as a candidate 
	and by noting that $\|A^T \| =\|A\|.$ 
	
	Using \eqref{eq:CaracterizeAsGradientLagrangeMult}, we have 
	$\| C^T \mu_{\mathcal P}\| \leq \tfrac{2}{\sqrt S} \|A\| \|f\|.$
	Choosing $\mu_{\mathcal P}$ in the complement of the zero space of $C^T,$ 
	we get 
	\begin{align}\label{eq:EstEigenvaluesOfC}
	\| C^T \mu_{\mathcal P}\| \geq   \inf_{x \in \left(\ker{C^T}\right)^\perp,\|x\|=1} \|C^Tx\| \ 	\|\mu_{\mathcal P}\|.
	\end{align}
	We observe that finding the infimum in \eqref{eq:EstEigenvaluesOfC} corresponds to finding the square root of the smallest nonzero eigenvalue of $C^TC.$ This is because (i) the nonzero eigenvalues of $C^TC$
	equal the nonzero eigenvalues of $CC^T,$ i.e.,
	\begin{align} \label{eq:Sigma1OfGraphLaplacian}
	\min\left\{\sigma: \sigma \in \mathrm{spectrum}(CC^T)\setminus \{0\} \right\} =
	\min\left\{\sigma: \sigma \in \mathrm{spectrum}(C^TC)\setminus \{0\} \right\} = \sigma_1,
	\end{align}
	where $\sigma_1$ is the smallest non-zero eigenvalue of $C^TC$.
	Further, (ii) for $x \in \left(\ker{C^T}\right)^\perp,$
	$\|C^Tx\|^2 = \langle  x, CC^Tx \rangle \geq 
	\min\left\{\sigma: \sigma \in \mathrm{spectrum}(CC^T)\setminus \{0\} \right\} \|x\|^2.$
	Hence, using \eqref{eq:Sigma1OfGraphLaplacian} in \eqref{eq:EstEigenvaluesOfC} we get  that
	$\| C^T \mu_{\mathcal P}\| \geq  \sqrt{\sigma_1} 	\|\mu_{\mathcal P}\|,$
	and together with 
	\eqref{eq:CaracterizeAsGradientLagrangeMult} and \eqref{eq:estGradient}, we obtain
	\begin{align}
		\|\mu_{\mathcal P}\| \leq \sigma^{-1/2} \| C^T \mu_{\mathcal P}\| \leq 2 \sigma^{-1/2} S^{-1/2} \|A\| \|f\|
	\end{align}
	which shows \eqref{eq:LagrangeMultEstGeneral}.	
	
	Now we consider the particular choice of $C$ given by the left-hand side of \eqref{eq:CforTotalCoupling}.
	Similar to the derivation in \eqref{eq:DefRhoTilde}, we have that 
	$C^TC= S \cdot I - (1,\ldots,1)(1,\ldots,1)^T).$ Further, the constants constitute the kernel of $C^TC$
	and any vector $u$ in its orthogonal complement is mapped to $S u.$  
	Hence, $\sigma_1 = S$ which shows \eqref{eq:LagrangeMultEstGeneralFull}.
	
	Finally, we consider the particular choice of $C$ given by the right-hand side of \eqref{eq:CforTotalCoupling}.
	As already explained in the proof of 
	Lemma~\ref{lem:BbyLcontractive}
	the discrete Fourier transform shows that the corresponding eigenvalues are given by 
	$\lambda_k =  \rho \left(2  - 2 \cos \left ( 2\pi \frac{k}{S} \right)\right),$
	where $k=0,\ldots,S-1.$ The smallest nonzero eigenvalue is thus given by $2-2\cos(2\pi/S).$	
	This shows \eqref{eq:LagrangeMultEstGeneralConsNeighb} which completes the proof of the lemma.  	
\end{proof}

\subsection{The quadratic penalty relaxation of the Potts problem 
	and its relation to the Potts problem}\label{sec:quadPenRelaxRelation}

In this subsection we reveal some relations between 
the Potts problem and its quadratic penalty relaxation;
in particular, we show Theorem~\ref{thm:RelaxedIsNPHard} and parts of
Theorem~\ref{thm:EffectRelaxedAlgo4OriginalProblem}.
We start out to show that the quadratic penalty relaxation of the Potts problem is NP hard which was formulated as
Theorem~\ref{thm:RelaxedIsNPHard}.
\begin{proof}[Proof of Theorem~\ref{thm:RelaxedIsNPHard}]
	We consider the quadratic penalty relaxation \eqref{eq:MinimizeMultVarRelax} of the multivariate Potts problem in its equivalent form \eqref{eq:RelaxedProblemInShortNotation} which reads	
	\begin{equation*}
	P_{\gamma,\rho}(u_1,\ldots,u_S) = \left\| B(u_1,\ldots,u_S)^T - g \right\|_2^2 + \gamma \ \Big\| \ D(u_1,\ldots,u_S) \ \Big\|_{0,\omega}.
	\end{equation*}
	with
	$B$ and $g$ given by \eqref{eq:DefOfBandg} and $D$ given by \eqref{eq:DefDiffOp}.
	We serialize $ u:(u_1,\ldots,u_S): \Omega \to \mathbb R^S$ into a  function 
	$\hat u:  X \to \mathbb R$ with $X \subset \mathbb Z$ being a discrete interval of size 
	$S \#\Omega$	as follows:
	For $u_s,$ we consider the discrete lines in the image with direction $a_s$ and interpret $u$
	on these lines as a vector; then we concatenate these vectors starting with the one corresponding to the leftmost upper line to obtain a vector of length $\#\Omega;$
	for each $s,$ we obtain such a vector and we again concatenate these vectors starting with index $s=1,2,\ldots$ to obtain the resulting object which we denote by $\hat u.$
	Using this serialization we may arrange $B,g$ and $D$ accordingly to obtain
	the univariate Potts problem
	\begin{equation*}
	\hat P_{\gamma,\rho}(\hat u) = \left\| \hat B \hat u - \hat g \right\|_2^2 + 
	\gamma \ \Big\| \hat \omega  \nabla \hat u \Big\|_{0},
	\quad \text{ where } 
	 \hat \omega: X \to [0,\infty)
	\end{equation*}
	is a weight vector, $\omega  \nabla \hat u$ denotes pointwise multiplication, and $\hat B,\hat g$ are the matrix and the vector corresponding to  $B,g$ 
	w.r.t.\ the serialization.
	The weight vector may be zero which in particular happens at the line breaks, i.e., those indices where two vectors have been concatenated in the above procedure.
	More precisely, constant data induce a directional segmentation on $\Omega$ and the image of the directional segmentation under the above serialization procedure induces a partitioning of the unvariate domain $D;$ precisely between these segments, the weight vectors equals zero.
	Now, for each segment $[d_1,\ldots,d_r]$ in $D,$ we transform 
	the basis $\delta_{d_1},\ldots,\delta_{d_r}$ to the basis 
	$\delta_{d_2}-\delta_{d_1}, \ldots, \delta_{d_r}-\delta_{d_{r-1}}, \tfrac{1}{r}\sum_{l=1}^r \delta_{d_l} $ obtained by neighboring differences and the average.
	As a result (which is in detail elaborated in \cite{storath2013jump}), we obtain a problem of the form
	\begin{equation}\label{eq:rewriteToSparsity}
	\hat P_{\gamma,\rho}(\hat u) = \left\| \tilde B \tilde u - \tilde b \right\|_2^2 + 
	\gamma \ \Big\| \hat \omega   \tilde u \Big\|_{0},
	\quad \text{ where } 
	\hat \omega: D \to [0,\infty)
	\end{equation}
	which is a sparsity problem and which is known to be NP hard; see, for instance, \cite{storath2013jump}.
	This shows the assertion.
	\end{proof}

We next characterize the local minimizers of the relaxed Potts problem \eqref{eq:MinimizeMultVarRelax} 
and of the Potts problem \eqref{eq:PottsDiscrete}.

\begin{lemma}\label{lem:CharLocMinQuPenRelax}
A local minimizer $u=(u_1,\ldots,u_S)$ of the quadratic penalty relaxation \eqref{eq:MinimizeMultVarRelax} is characterized as follows: let $\mathcal I$ be the directional partitioning induced by the minimizer $u,$ and $\mathcal P = \mathcal P_{\mathcal I}$ be the induced partitioning, then $u$ is a minimizer of  the problem
\begin{align}\label{eq:QuadraticRelaxOnAp}
\min_{u \in \mathcal A^{\mathcal P}}   F_{\rho}(u),
\quad \text{ where } \quad
F_{\rho}(u) = \sum\nolimits_{s=1}^S \tfrac{1}{S}  \left\| Au_s - f \right\|_2^2 + 
\rho\|C u \|^2.
\end{align}
Conversely, if $u$ minimizes \eqref{eq:QuadraticRelaxOnAp} on $\mathcal A^{\mathcal P},$  then $u$ is a minimizer of 
the relaxed Potts problem \eqref{eq:MinimizeMultVarRelax}.	
\end{lemma}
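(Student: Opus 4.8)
The plan is to isolate in $P_{\gamma,\rho}$ the smooth convex part from the purely combinatorial jump count. From the definitions of $F_\rho$ in \eqref{eq:QuadraticRelaxOnAp} and of $P_{\gamma,\rho}$ in \eqref{eq:MinimizeMultVarRelax} one has, for all $(u_1,\ldots,u_S)$, that $P_{\gamma,\rho}(u_1,\ldots,u_S)=F_\rho(u_1,\ldots,u_S)+\gamma\sum_{s=1}^S\omega_s\|\nabla_{a_s}u_s\|_0$, where $F_\rho$ is a nonnegative convex quadratic and the second term depends on $(u_1,\ldots,u_S)$ only through its jump pattern. The whole argument then rests on one elementary perturbation fact: if $v=(v_1,\ldots,v_S)$ is close enough to a fixed $u=(u_1,\ldots,u_S)$ then $\|\nabla_{a_s}v_s\|_0\ge\|\nabla_{a_s}u_s\|_0$ for each $s$ — a small perturbation cannot erase an existing jump, only create new ones — with equality for every $s$ precisely when $v$ is constant on the intervals of the directional partition $\mathcal I$ induced by $u$ (i.e.\ $v$ lies in the range of $P_{\mathcal I}$, cf.\ \eqref{eq:DefProjection}).

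\emph{Forward direction.} Let $u$ be a local minimizer of $P_{\gamma,\rho}$, let $\mathcal I$ be its induced directional partition, $\mathcal P=\mathcal P_{\mathcal I}$ and $\mathcal I^{\mathcal P}=\mathcal I(\mathcal P)$. For $v$ near $u$ that is constant on the intervals of $\mathcal I$, the jump term equals $\gamma\sum_s\omega_s\|\nabla_{a_s}u_s\|_0$, so $P_{\gamma,\rho}(v)\ge P_{\gamma,\rho}(u)$ gives $F_\rho(v)\ge F_\rho(u)$; as this holds on a linear subspace containing $u$ and $F_\rho$ is convex, $u$ is in fact a global minimizer of $F_\rho$ over that subspace, so that $P_{\mathcal I}\nabla F_\rho(u)=0$. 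It then remains to see that this subspace equals $\mathcal A^{\mathcal P}$, equivalently that $u$ has no jump between two adjacent pixels that end up in a common segment of $\mathcal P$; granting this, $u\in\mathcal A^{\mathcal P}$, and being a global minimizer of $F_\rho$ over the larger subspace it is a fortiori a minimizer over $\mathcal A^{\mathcal P}$, i.e.\ of \eqref{eq:QuadraticRelaxOnAp}.

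I expect this last identification to be the main obstacle. Suppose $u_s$ jumped between two adjacent intervals $I',I''$ of $\mathcal I$ lying in a common segment $\mathcal P_i$. Since $\mathcal P_i$ is connected with respect to the neighborhood system, $I'$ and $I''$ are joined by a finite chain of intervals of $\mathcal I$ along which, interval after interval, successive components of $u$ are forced to be constant. Feeding this chain of constancies into the stationarity relation $P_{\mathcal I}\nabla F_\rho(u)=0$ derived above — in which the coupling term $\rho\|Cu\|^2$ ties together the values of different components at a common pixel, hence of overlapping intervals of $\mathcal I$ — produces a feasible direction along which $P_{\gamma,\rho}$ strictly decreases at $u$, contradicting local minimality. (For $A=\mathrm{id}$ this can be seen directly: the stationarity conditions attached to such a chain form an overdetermined linear system in the interval values of $u$ and in the data $f$ whose solvability is incompatible with $u_s|_{I'}\ne u_s|_{I''}$.)

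\emph{Converse.} Let $u\in\mathcal A^{\mathcal P}$ minimize $F_\rho$ over $\mathcal A^{\mathcal P}$, and let $v$ be close to $u$. By the perturbation fact, either $v$ has exactly the same jump set as $u$ or it has strictly more jumps. In the first case every jump of $v$ sits on a boundary of an interval of $\mathcal I^{\mathcal P}$, since $v$ has the same jumps as $u$ and $u$, lying in $\mathcal A^{\mathcal P}$, only jumps at such boundaries; hence $v\in\mathcal A^{\mathcal P}$, so $F_\rho(v)\ge F_\rho(u)$ and $P_{\gamma,\rho}(v)=F_\rho(v)+\gamma\sum_s\omega_s\|\nabla_{a_s}u_s\|_0\ge P_{\gamma,\rho}(u)$. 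In the second case $\gamma\sum_s\omega_s\|\nabla_{a_s}v_s\|_0\ge\gamma\sum_s\omega_s\|\nabla_{a_s}u_s\|_0+\gamma\min_s\omega_s$, and this fixed increment of the jump penalty dominates the $O(\|v-u\|)$ change of the continuous function $F_\rho$ once $v$ is close enough to $u$; again $P_{\gamma,\rho}(v)\ge P_{\gamma,\rho}(u)$. Hence $u$ is a local minimizer of $P_{\gamma,\rho}$, i.e.\ of the relaxed Potts problem \eqref{eq:MinimizeMultVarRelax}.
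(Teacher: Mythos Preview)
Your converse direction is essentially the paper's: both split into the equal--jump case (where $v\in\mathcal A^{\mathcal P}$ and minimality of $F_\rho$ applies) and the extra--jump case (where the discrete increment $\gamma\min_s\omega_s$ dominates the continuous variation of $F_\rho$ for small $\|v-u\|$).

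In the forward direction you and the paper share the core mechanism --- nearby $v$ with the same jump pattern give $F_\rho(v)\ge F_\rho(u)$ locally, and convexity of $F_\rho$ upgrades this to a global statement on the relevant linear subspace. The paper argues directly with $v\in\mathcal A^{\mathcal P}$; you argue first on the a priori larger subspace $V_{\mathcal I}=\{v:\text{$v_s$ constant on the intervals of $\mathcal I_s$}\}$ (which certainly contains $u$) and then attempt an additional step, namely to prove that in fact $V_{\mathcal I}=\mathcal A^{\mathcal P}$, equivalently $u\in\mathcal A^{\mathcal P}$.

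That additional step is where your proposal has a genuine gap. The argument that a chain of $\mathcal I$--intervals joining $I'$ and $I''$ inside a common segment of $\mathcal P$, combined with $P_{\mathcal I}\nabla F_\rho(u)=0$, ``produces a feasible descent direction'' is never carried out, and it is not clear it can be. The coupling term $\rho\|Cu\|^2$ is soft: for finite $\rho$ nothing forces $u_s$ and $u_{s'}$ to agree on overlapping intervals, so a jump of $u_s$ across two adjacent $\mathcal I_s$--intervals that happen to lie in the same $\mathcal P$--segment is not obviously incompatible with stationarity. Your parenthetical claim for $A=\mathrm{id}$ that the resulting linear system is overdetermined is asserted, not shown, and a moment's bookkeeping on a $2\times 2$ example with $S=2$ suggests the equations are not overdetermined in the way you need.

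The paper's own proof does not attempt this identification; it simply concludes that $u$ minimizes \eqref{eq:QuadraticRelaxOnAp} from the local comparison with $v\in\mathcal A^{\mathcal P}$. Strictly read, that also leaves the membership $u\in\mathcal A^{\mathcal P}$ unaddressed, so you have in fact spotted a subtlety the paper glosses over. The clean fix --- which you already have in hand --- is to stop after establishing that $u$ minimizes $F_\rho$ on $V_{\mathcal I}$: since $\mathcal A^{\mathcal P}\subseteq V_{\mathcal I}$, this gives $F_\rho(u)\le F_\rho(v)$ for every $v\in\mathcal A^{\mathcal P}$, and that inequality (not the membership $u\in\mathcal A^{\mathcal P}$) is all that is used downstream in Proposition~\ref{prop:aproxLocMin}, Proposition~\ref{pro:theoretResOnProc1} and Lemma~\ref{lem:RelaxedConvergesAndTolerance}. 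So drop the identification paragraph; the rest of your forward argument is correct and slightly sharper than the paper's, since you make explicit the passage through $V_{\mathcal I}$ and the role of convexity.
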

\begin{proof}
Let $u=(u_1,\ldots,u_S)$ be a local minimizer of the quadratic penalty relaxation \eqref{eq:MinimizeMultVarRelax}. Hence, there is a neigborhood $\mathcal U$ of $u$ such that, 
for any $v \in \mathcal U,$ $P_{\gamma, \rho}(v) \geq P_{\gamma, \rho}(u).$
Now if $v \in \mathcal A^{\mathcal P}$ and $\|v-u\|$ is small, then
$\sum _{s=1}^S \omega_s  \left\| \nabla_{a_s} u_s \, \right\|_0$
$ =\sum _{s=1}^S \omega_s  \left\| \nabla_{a_s} v_s \, \right\|_0$
which implies that 
\begin{align}
	    F_{\rho}(u) = 
	    P_{\gamma, \rho}(u) - \gamma \sum _{s=1}^S \omega_s  \left\| \nabla_{a_s} u_s \,\right\|_0 
	    \leq P_{\gamma, \rho}(v) - \gamma \sum _{s=1}^S \omega_s  \left\| \nabla_{a_s} v_s \, \right\|_0 
	      =  F_{\rho}(v). 
\end{align}
This shows that $u$ minimizes \eqref{eq:QuadraticRelaxOnAp}. 
Conversely, we assume that $u$ minimizes \eqref{eq:QuadraticRelaxOnAp}.
If the directional partitioning $\mathcal I'$ induced by $u$ is coarser than $\mathcal I$
consider the coarser directional partitioning $\mathcal I'$ instead of $\mathcal I.$
Let the maximum norm of $h=(h_1\ldots,h_S)$ be smaller than the height of the smallest jump of $u,$
then, for $u+h,$  
\begin{equation}\label{eq:EstJumps4MinCharacterization}
\sum _{s=1}^S \omega_s  \left\| \nabla_{a_s} (u_s+h_s) \, \right\|_0 \geq 
\sum _{s=1}^S \omega_s  \left\| \nabla_{a_s} u_s \, \right\|_0.  	
\end{equation}
If inequality holds in \eqref{eq:EstJumps4MinCharacterization}, the continuity of $F_{\rho}$ 
implies that $F_{\rho}(u+h) \geq  F_{\rho}(u) - \varepsilon$ for small enough $h$ and arbitrary $\varepsilon.$ Hence, 
\begin{align} \notag
	P_{\gamma, \rho}(u) &= F_{\rho}(u) + \gamma \sum _{s=1}^S \omega_s  \left\| \nabla_{a_s} u_s \, \right\|_0
	\leq F_{\rho}(u+h) - \gamma \min_s \omega_s + \gamma \sum _{s=1}^S \omega_s  \left\| \nabla_{a_s} (u_s+h_s) \, \right\|_0 + \varepsilon \\
	&\leq F_{\rho}(u+h) + \gamma \sum _{s=1}^S \omega_s  \left\| \nabla_{a_s} (u_s+h_s) \, \right\|_0
	= P_{\gamma, \rho}(u+h),
\end{align}
if we choose $\varepsilon$ small enough. If equality holds in \eqref{eq:EstJumps4MinCharacterization}
we have that $u+h \in \mathcal A^{\mathcal P}$ which implies $F_{\rho}(u) \leq F_{\rho}(u+h)$ since $u$
is a minimizer of $F_{\rho}$ on $\mathcal A^{\mathcal P}.$ This in turn implies 
$P_{\gamma, \rho}(u) \leq P_{\gamma, \rho}(u+h)$ by the assumed equality in \eqref{eq:EstJumps4MinCharacterization}. Together, in any case, $P_{\gamma, \rho}(u) \leq P_{\gamma, \rho}(u+h)$ for any small perturbation $h.$ This shows that $u$ is a local minimizer of 
$P_{\gamma, \rho}$ which completes the proof.
\end{proof}

\begin{lemma}\label{lem:CharLokMinPotts}
	We consider a function $u^\ast:\Omega \to \mathbb R$ and its induced partitioning $\mathcal P.$
	Then $u$ is a local minimizer of the Potts problem \eqref{eq:PottsDiscrete}, if and only if
	$(u^\ast,\ldots,u^\ast)$ minimizes \eqref{eq:ell2problemOnSegments} w.r.t.\ 
	$\mathcal P.$  
\end{lemma}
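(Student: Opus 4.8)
The plan is to mirror the argument used for the quadratic penalty relaxation in Lemma~\ref{lem:CharLocMinQuPenRelax}, adapted to the data term $\|Au-f\|_2^2$ of the Potts functional \eqref{eq:PottsDiscrete}. The first step is a reformulation: by Lemma~\ref{lem:IdentifyBPandPcPart} the feasible set $\mathcal B^{\mathcal P}$ of \eqref{eq:ell2problemOnSegments} is, via $g\mapsto(g,\ldots,g)$, in one-to-one correspondence with the space of scalar functions that are piecewise constant w.r.t.\ $\mathcal P$, and on $\mathcal B^{\mathcal P}$ the objective $\sum_{s=1}^S\frac1S\|Au_s-f\|_2^2$ collapses to $\|Ag-f\|_2^2$. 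Hence the statement ``$(u^\ast,\ldots,u^\ast)$ minimizes \eqref{eq:ell2problemOnSegments} w.r.t.\ $\mathcal P$'' is equivalent to ``$u^\ast$ minimizes $g\mapsto\|Ag-f\|_2^2$ over all $g$ that are constant on every segment $\mathcal P_i$''. With this identification in hand I would prove the two implications separately.

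For the direction that this minimization property implies local minimality of $u^\ast$, let $h$ be a perturbation with $\|h\|_\infty$ smaller than the smallest jump height of $u^\ast$ (and smaller than a further threshold fixed below). A perturbation of this size cannot remove a jump, so $\|\nabla_{a_s}(u^\ast+h)\|_0\geq\|\nabla_{a_s}u^\ast\|_0$ for every $s$, exactly as in \eqref{eq:EstJumps4MinCharacterization}. If strict inequality occurs for some $s$, then the jump penalty $\gamma\sum_s\omega_s\|\nabla_{a_s}\cdot\|_0$ of $u^\ast+h$ exceeds that of $u^\ast$ by at least $\gamma\min_s\omega_s$; since $h\mapsto\|A(u^\ast+h)-f\|_2^2$ is continuous, shrinking the neighborhood so that its oscillation stays below $\gamma\min_s\omega_s$ forces $P_\gamma(u^\ast+h)\geq P_\gamma(u^\ast)$. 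If equality holds for all $s$, then the jump set of $u^\ast+h$ coincides with that of $u^\ast$ in every direction, so $u^\ast+h$ is constant on each interval of the directional partitioning induced by $u^\ast$; by connectivity of the segments $\mathcal P_i$ (any two pixels of $\mathcal P_i$ are joined by a path whose consecutive members share such an interval) it is then constant on each $\mathcal P_i$, i.e.\ piecewise constant w.r.t.\ $\mathcal P$. The assumed minimality gives $\|A(u^\ast+h)-f\|_2^2\geq\|Au^\ast-f\|_2^2$, and adding the equal jump penalties yields $P_\gamma(u^\ast+h)\geq P_\gamma(u^\ast)$. In both cases $u^\ast$ is a local minimizer of \eqref{eq:PottsDiscrete}.

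For the converse, suppose $u^\ast$ is a local minimizer of $P_\gamma$ but, for contradiction, does not minimize $\|Ag-f\|_2^2$ over the functions piecewise constant w.r.t.\ $\mathcal P$; choose such a $g'$ with $\|Ag'-f\|_2^2<\|Au^\ast-f\|_2^2$ and put $g_t=(1-t)u^\ast+tg'$ for $t\in(0,1]$. Each $g_t$ is piecewise constant w.r.t.\ $\mathcal P$, and convexity of $v\mapsto\|Av-f\|_2^2$ gives $\|Ag_t-f\|_2^2\leq(1-t)\|Au^\ast-f\|_2^2+t\|Ag'-f\|_2^2<\|Au^\ast-f\|_2^2$. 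For $t$ small enough the values of $g_t$ on any two adjacent segments of $\mathcal P$ remain distinct (they are distinct for $u^\ast$, and there are only finitely many pairs), hence the jump penalty of $g_t$ equals that of $u^\ast$. Therefore $P_\gamma(g_t)<P_\gamma(u^\ast)$ while $g_t\to u^\ast$ as $t\to 0$, contradicting local minimality. This establishes the lemma.

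The step I expect to require the most care is the equality case above, i.e.\ deducing that $u^\ast+h$ is piecewise constant w.r.t.\ $\mathcal P$ from the mere equality of the jump counts; this has to be argued, as in the proof of Lemma~\ref{lem:CharLocMinQuPenRelax}, through the connectivity of the segments and the structure of the induced directional partitioning (including the case in which the partitioning induced by $u^\ast$ is strictly coarser than the one used to define $\mathcal A^{\mathcal P}$). The remaining ingredients --- continuity of the data term, convexity of the squared residual, and the standard ``small perturbations cannot destroy jumps'' observation --- are routine.
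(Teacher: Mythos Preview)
Your proof is correct and follows essentially the same approach as the paper's: the same case split on whether the jump count increases strictly or stays equal for the ``minimizer $\Rightarrow$ local minimizer'' direction, and the same reduction to the convex data term on $\mathcal B^{\mathcal P}$ for the converse. The only cosmetic difference is that for the converse you argue by contradiction via convex interpolation $g_t=(1-t)u^\ast+tg'$, whereas the paper directly observes that nearby $v\in\mathcal B^{\mathcal P}$ share the jump penalty of $u^\ast$ and then (implicitly using convexity of the quadratic data term) concludes global minimality from local minimality.
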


\begin{proof} 
	Since the proof of this statement is very similar to the proof of Lemma~\ref{lem:CharLocMinQuPenRelax} we keep it rather short and refer to the proof of Lemma~\ref{lem:CharLocMinQuPenRelax} if more explanation is necessary.
	Let $u$ be a minimizer of \eqref{eq:PottsDiscrete} which is equivalent to 
	$\bar u = (u,\ldots,u)$ being a minimizer of \eqref{eq:PottsDiscreteSvariables}.
	There is a neigborhood $\mathcal U$ of $\bar u$ such that, 
	for any $\bar v=(v,\ldots,v) \in \mathcal U,$ $P_{\gamma}(v) \geq P_{\gamma}(u).$
	For $\bar v \in \mathcal B^{\mathcal P}$ with small $\|\bar v- \bar u\|,$ we have
	$\sum _{s=1}^S \omega_s  \left\| \nabla_{a_s} u \, \right\|_0$
	$ =\sum _{s=1}^S \omega_s  \left\| \nabla_{a_s} v \, \right\|_0.$
	Hence, by the definition of $P_\gamma$ in \eqref{eq:PottsDiscreteSvariables} 
	$	\| Au - f \|_2^2  \leq  \| Av - f \|_2^2 $ 
	which shows that $(u^\ast,\ldots,u^\ast)$ minimizes \eqref{eq:ell2problemOnSegments}.
	
	Conversely, let $\bar u = (u,\ldots,u)$ be a minimizer of \eqref{eq:ell2problemOnSegments} 
	with the partitioning $\mathcal P$ induced by $u$.
	For $\bar h=(h,\ldots,h)$ with absolute value smaller than the minimal height of a jump of $u,$
	we have the estimate
	$ 
	\sum _{s=1}^S \omega_s  \left\| \nabla_{a_s} (u+h) \, \right\|_0 
	$
	$
	\geq 
	\sum _{s=1}^S \omega_s  \left\| \nabla_{a_s} u \, \right\|_0.  	
	$
	If inequality holds in this estimate, the continuity of $F_{\rho}$ 
	implies that 
	$	\| A(u+h) - f \|_2^2  \geq  \| Au - f \|_2^2 - \varepsilon$
	for small enough $h$ and arbitrary $\varepsilon.$ Hence, 
	$
	P_{\gamma}( \bar u) \leq \| A(u+h) - f \|_2^2 - \gamma \min_s \omega_s + \gamma \sum _{s=1}^S \omega_s  \left\| \nabla_{a_s} (u+h) \, \right\|_0 + \varepsilon$ 
	$
	\leq  P_{\gamma, \rho}(\bar u+\bar h)
	$
	if  $\varepsilon$ is small. If equality holds above, i.e, 
	$ 
	\sum _{s=1}^S \omega_s  \left\| \nabla_{a_s} (u+h) \, \right\|_0 
	$
	$
	= 
	\sum _{s=1}^S \omega_s  \left\| \nabla_{a_s} u \, \right\|_0,  	
	$
	then $\bar u+ \bar h \in \mathcal B^{\mathcal P}$ which implies that 
	$\| Au - f \|_2^2  \leq \| A(u+h) - f \|_2^2$
	since $\bar u$ is a minimizer of the corresponding function on $\mathcal B^{\mathcal P}.$ 
	As a consequence
	$P_{\gamma}(\bar u) \leq P_{\gamma}(\bar u+ \bar h)$ for any small perturbation $h.$ This shows that $u$ is a local minimizer of 
	$P_{\gamma}$ which completes the proof.
\end{proof}

\begin{proposition}\label{prop:aproxLocMin}
	Any local minimizer of the quadratic penalty relaxation \eqref{eq:MinimizeMultVarRelax} is an approximate local minimizer in the sense of \eqref{eq:approxSolution} of the Potts problem \eqref{eq:MinimizeMultVar}.
\end{proposition}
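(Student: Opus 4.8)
The plan is to reduce the statement, via Lemma~\ref{lem:CharLocMinQuPenRelax}, to a convex least-squares problem on a linear subspace, and then to extract the two inequalities defining \eqref{eq:approxSolution} from a comparison estimate and from a first-order optimality condition.

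Let $u^\ast=(u^\ast_1,\dots,u^\ast_S)$ be a local minimizer of the quadratic penalty relaxation \eqref{eq:MinimizeMultVarRelax}, let $\mathcal I$ be the directional partitioning it induces, $\mathcal P^\ast=\mathcal P_{\mathcal I}$ the associated partitioning, and write $P=P_{\mathcal I^{\mathcal P^\ast}}$ for the orthogonal projection onto $\mathcal A^{\mathcal P^\ast}$ (cf.\ \eqref{eq:DefProjection}, \eqref{eq:DefSubspacesA}). By Lemma~\ref{lem:CharLocMinQuPenRelax}, $u^\ast$ minimizes $F_\rho(u)=\sum_{s=1}^S\tfrac1S\|Au_s-f\|_2^2+\rho\|Cu\|^2$ over $\mathcal A^{\mathcal P^\ast}$, where $C$ is the coupling matrix \eqref{eq:CforCoupling}. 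Since the constant zero function lies in $\mathcal A^{\mathcal P^\ast}$ and $F_\rho(0)=\|f\|_2^2$, minimality gives $F_\rho(u^\ast)\le\|f\|_2^2$; discarding the nonnegative data term leaves $\rho\|Cu^\ast\|^2\le\|f\|_2^2$. Because $\|Cu^\ast\|^2=\sum_{s<s'}c_{s,s'}^2\|u^\ast_s-u^\ast_{s'}\|_2^2$ and $c_{s,s'}\ge c_{\min}:=\min\{c_{s,s'}:c_{s,s'}>0\}$ on coupled pairs (with $c_{s,s'}\in\{0,1\}$, hence $c_{\min}=1$, for the standard couplings \eqref{eq:ChoiceRhos}), we get
\[
\sum_{s,s'}c_{s,s'}\|u^\ast_s-u^\ast_{s'}\|_2^2=2\sum_{s<s'}c_{s,s'}\|u^\ast_s-u^\ast_{s'}\|_2^2\le\frac{2}{c_{\min}}\|Cu^\ast\|^2\le\frac{2\|f\|_2^2}{c_{\min}\,\rho}=:\delta,
\]
which is the first inequality in \eqref{eq:approxSolution}; it becomes arbitrarily small as $\rho$ grows.

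For the Lagrange-multiplier condition I would invoke first-order optimality of $u^\ast$ on the subspace $\mathcal A^{\mathcal P^\ast}$. With $\tilde A,\tilde f$ as in \eqref{eq:CaracterizeAsGradientLagrangeMult} and $u^\ast=Pu^\ast$, the stationarity relation $P\nabla F_\rho(u^\ast)=0$ reads
\[
\tfrac{2}{S}P\tilde A^{T}\tilde A\,Pu^\ast-\tfrac{2}{S}P\tilde A^{T}\tilde f+2\rho\,PC^{T}C\,u^\ast=0 .
\]
Putting $\mu^\ast:=-2\rho\,Cu^\ast$ turns this into precisely \eqref{eq:CaracterizeAsGradientLagrangeMult} and \eqref{eq:CaracterizeAsGradientLagrangeMultLmu} (with $u^\ast$ playing the role of $u_{\mathcal P}^\ast$), so that $L(\mu^\ast)=0<\delta$. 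Together with the previous display, $u^\ast$ is a $\delta$-approximative solution in the sense of \eqref{eq:approxSolution} for the partitioning $\mathcal P^\ast$, the multiplier $\mu^\ast$ and $\delta=2\|f\|_2^2/(c_{\min}\rho)$, which is the assertion.

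The one genuinely substantive step is the appeal to Lemma~\ref{lem:CharLocMinQuPenRelax}, which lets us pass from a local minimizer of the nonsmooth, nonconvex $P_{\gamma,\rho}$ to a global minimizer of the smooth convex $F_\rho$ on a fixed subspace; everything in the first part is then a one-line comparison with the zero function. In the second part the point to recognize — and I expect this to be the conceptual crux — is that the gradient contribution of the penalty term, $2\rho\,PC^{T}Cu^\ast$, already has the form $PC^{T}\mu^\ast$, so the Lagrange residual is not merely small but identically zero; the only real bookkeeping is the factor $1/c_{\min}$ relating $\|Cu^\ast\|^2$ to the weighted sum (trivial for \eqref{eq:ChoiceRhos}). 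If one instead insisted on taking for $\mu^\ast$ the $\rho$-uniform Lagrange multiplier of \eqref{eq:ell2problemOnSegments} furnished by Lemma~\ref{lem:EstLagrangeMultipliers}, then $L(\mu^\ast)$ would no longer vanish and one would have to control, in addition, the distance of $u^\ast$ to a minimizer of \eqref{eq:ell2problemOnSegments} — via strong convexity of $F_\rho$ transverse to $\mathcal B^{\mathcal P^\ast}$ and the bound $\|Cu^\ast\|=O(\rho^{-1/2})$ from the first part — but this sharper route is not needed for the statement as it stands.
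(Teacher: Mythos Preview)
Your proof is correct and follows the same architecture as the paper's: invoke Lemma~\ref{lem:CharLocMinQuPenRelax} to pass to the convex problem on $\mathcal A^{\mathcal P^\ast}$, read off stationarity to get $L(\mu^\ast)=0$ with $\mu^\ast$ proportional to $Cu^\ast$, and separately bound the constraint violation. The Lagrange-multiplier part is essentially identical to the paper's (up to harmless sign/factor conventions that are already inconsistent between \eqref{eq:CaracterizeAsGradientLagrangeMultLmu} and \eqref{eq:MuIsZero} in the paper itself).

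The genuine difference is in the constraint bound. You compare with the zero function to get $\rho\|Cu^\ast\|^2\le\|f\|_2^2$, hence $\|Cu^\ast\|^2=O(1/\rho)$. The paper instead applies Lemma~\ref{lem:estCu} at the minimizer (so the square-root term vanishes) together with the uniform Lagrange-multiplier bound of Lemma~\ref{lem:EstLagrangeMultipliers}, obtaining $\|Cu^\ast\|\le\|\mu^\ast\|/\rho$, hence $\|Cu^\ast\|^2=O(1/\rho^2)$. Your route is more elementary and self-contained; the paper's route gives a strictly sharper rate, and it is this sharper rate that is reused later (e.g.\ in Theorem~\ref{thm:RelaxedConvergesAndTolerance}(iii) and in the analysis of Algorithm~\ref{alg:Algo2}). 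For the bare statement of Proposition~\ref{prop:aproxLocMin}, however, either bound suffices, so your argument stands.
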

\begin{proof}
By Lemma~\ref{lem:CharLocMinQuPenRelax},  a local minimizer $u=(u_1,\ldots,u_S)$ of the quadratic penalty relaxation \eqref{eq:MinimizeMultVarRelax} is a minimizer of  the problem
\eqref{eq:QuadraticRelaxOnAp}.
Let us thus consider a local minimizer $u$ of \eqref{eq:MinimizeMultVarRelax} with induced partitioning 
 $\mathcal P = \mathcal P_{\mathcal I}.$ Since $u$ minimizes \eqref{eq:QuadraticRelaxOnAp}, we have 
\begin{align}\label{eq:CaracterizeAsGradientLagrangeMultloc}
\tfrac{1}{S} P_{\mathcal I}\tilde A^T \tilde A P_{\mathcal I} u - \tfrac{1}{S} P_{\mathcal I} \tilde A^T \tilde f + \rho P_{\mathcal I} C^T C P_{\mathcal I} u = 0 
\end{align} 
since the gradient projected to $\mathcal A^{\mathcal P}$  equals zero for any local minimizer  
of the restricted problem on the subspace $\mathcal A^{\mathcal P}.$ 
(The notation is chosen as in \eqref{eq:CaracterizeAsGradientLagrangeMultLmu} above.) We define 
$\mu$ by 
$\mu = \rho  C P_{\mathcal I} u$ and obtain 
\begin{equation}\label{eq:MuIsZero}
	L(\mu) = \|\tfrac{1}{S} P_{\mathcal I}\tilde A^T \tilde A P_{\mathcal I} u - \tfrac{1}{S} P_{\mathcal I} \tilde A^T \tilde f + P_{\mathcal I} C^T \mu \| = 0
\end{equation}
 by \eqref{eq:CaracterizeAsGradientLagrangeMultloc}.
It remains to show that $\|Cu\|$ becomes small.
To this end, we observe that, by Lemma~\ref{lem:estCu}, 
for arbitrary $v=(v_1,\ldots,v_S)\in \mathcal A^{\mathcal P}$, 
$  
\| C  v \| =  \| C P_{\mathcal I} v \| \leq \tfrac{1}{\rho}\|\mu^\ast\|+ \sqrt{\tfrac{F_{\rho}(v)- \min_{x \in \mathcal A^{\mathcal P}} F_{\rho}(x)}{\rho}},
$
where $\mu^\ast$ is an arbitrary Lagrange multiplier of \eqref{eq:ell2problemOnSegments}.
Plugging in the minimizer $u$ for $v$ yields $\| C  u \| < \tfrac{1}{\rho}\|\mu^\ast\|.$
Thus letting  $\delta = \tfrac{1}{\rho}\|\mu^\ast\|,$ we have 
\begin{align}
\sum_{s,s'} c_{s,s'} \|u^\ast_s - u^\ast_{s'}\|^2_2 = \|Cu\|^2 < \delta,
\end{align}
and  $L(\mu) = 0$ by \eqref{eq:MuIsZero} which by \eqref{eq:approxSolution} shows the assertion and completes the proof.
\end{proof}
For the proof of Proposition~\ref{prop:aproxLocMin} as well as in the following we need the next lemma. Similar statements are \cite[Proposition 13]{lan2013iteration} and  \cite[Lemma 2.5]{lu2012iterative}.
However, since there are differences concerning the precise estimate in these references, and the setup here is slightly different, we provide a brief proof here for the readers convenience. 
\begin{lemma}\label{lem:estCu}
	Let $\mathcal P$ be a partitioning and $\mathcal I = \mathcal I_{\mathcal P}$ be the corresponding induced partitioning. 
	For arbitrary $v=(v_1,\ldots,v_S)\in \mathcal A^{\mathcal P}$, 
	\begin{align}  
	\| C  v \| =  \| C P_{\mathcal I} v \| \leq \tfrac{1}{\rho}\|\mu^\ast\|+ \sqrt{\tfrac{F_{\rho}(v)- \min_{x \in \mathcal A^{\mathcal P}} F_{\rho}(x)}{\rho}},
	\end{align}
	where $\mu^\ast$ is an arbitrary Lagrange multiplier of \eqref{eq:ell2problemOnSegments}.
\end{lemma}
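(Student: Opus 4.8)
The plan is to run the classical penalty--duality estimate. First I would fix notation: write $q(u):=\sum_{s=1}^{S}\tfrac1S\|Au_s-f\|_2^2$ for the data term, so that on $\mathcal A^{\mathcal P}$ one has $F_\rho=q+\rho\|C\,\cdot\,\|^2$, and abbreviate $F_\rho^\ast:=\min_{x\in\mathcal A^{\mathcal P}}F_\rho(x)$. The equality $\|Cv\|=\|CP_{\mathcal I}v\|$ is trivial, since $\mathcal A^{\mathcal P}$ is by \eqref{eq:DefSubspacesA} the range of the orthogonal projection $P_{\mathcal I}=P_{\mathcal I^{\mathcal P}}$, whence $P_{\mathcal I}v=v$ for $v\in\mathcal A^{\mathcal P}$.

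Next I would bring in the Lagrange multiplier. As recalled above, and since \eqref{eq:ell2problemOnSegments} is a quadratic program (cf.\ \cite{bertsekas2014constrained}), a multiplier $\mu^\ast$ exists, and by its defining property
\[
q(v)+(\mu^\ast)^{T}Cv\ \ge\ \min_{u\in\mathcal A^{\mathcal P}}\bigl(q(u)+(\mu^\ast)^{T}Cu\bigr)\ =\ \min_{u\in\mathcal B^{\mathcal P}}q(u)\ =:\ q^\ast \qquad\text{for all }v\in\mathcal A^{\mathcal P}.
\]
A one-line observation I would record is that $q^\ast\ge F_\rho^\ast$: a minimizer $u^\ast\in\mathcal B^{\mathcal P}$ of $q$ satisfies $Cu^\ast=0$, so $F_\rho(u^\ast)=q(u^\ast)=q^\ast$, and $F_\rho(u^\ast)\ge F_\rho^\ast$ because $u^\ast\in\mathcal A^{\mathcal P}$.

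Then I would combine the two. For $v\in\mathcal A^{\mathcal P}$, Cauchy--Schwarz applied to $(\mu^\ast)^{T}Cv$, together with the displayed inequality and $q^\ast\ge F_\rho^\ast$, gives
\[
F_\rho(v)=q(v)+\rho\|Cv\|^2\ \ge\ q^\ast-\|\mu^\ast\|\,\|Cv\|+\rho\|Cv\|^2\ \ge\ F_\rho^\ast-\|\mu^\ast\|\,\|Cv\|+\rho\|Cv\|^2 .
\]
Setting $t:=\|Cv\|\ge 0$ and $\beta:=F_\rho(v)-F_\rho^\ast\ge 0$, this is the quadratic inequality $\rho t^2-\|\mu^\ast\|\,t-\beta\le 0$, so $t\le\bigl(\|\mu^\ast\|+\sqrt{\|\mu^\ast\|^2+4\rho\beta}\bigr)/(2\rho)$; applying $\sqrt{a+b}\le\sqrt a+\sqrt b$ yields $t\le\|\mu^\ast\|/\rho+\sqrt{\beta/\rho}$, which is exactly the claimed bound.

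I do not expect a genuine obstacle here; the only points needing a little care are getting the direction of $q^\ast\ge F_\rho^\ast$ right (so that $q^\ast$ may be replaced by the smaller quantity $F_\rho^\ast$ while keeping $\beta\ge 0$, which is what makes the square-root step legitimate) and the elementary subadditivity of the square root at the end. Validity for an \emph{arbitrary} multiplier $\mu^\ast$ is automatic, since the argument only uses the defining inequality of a Lagrange multiplier.
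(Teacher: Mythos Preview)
Your proof is correct and follows essentially the same route as the paper's: both derive the inequality $q(v)-q^\ast\ge -\|\mu^\ast\|\,\|Cv\|$ from the Lagrange-multiplier property (the paper cites \cite[Corollary~2]{lan2013iteration} for this, while you obtain it directly via Cauchy--Schwarz), combine it with $q^\ast\ge F_\rho^\ast$ to reach the quadratic inequality $\rho\|Cv\|^2-\|\mu^\ast\|\|Cv\|-(F_\rho(v)-F_\rho^\ast)\le 0$, and finish with $\sqrt{a+b}\le\sqrt a+\sqrt b$. The only cosmetic difference is that the paper completes the square where you invoke the quadratic formula.
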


\begin{proof}
By \cite[Corollary 2]{lan2013iteration}, we have for arbitrary $v=(v_1,\ldots,v_S)\in \mathcal A^{\mathcal P}$ that
\begin{equation}\label{eq:EstfromLan}
\sum\nolimits_{s=1}^S \tfrac{1}{S}  \left\| Av_s - f \right\|_2^2  
 - \min_{(y,\ldots,y) \in \mathcal B^{\mathcal P}} \left\| Ay - f \right\|_2^2
   \geq - \|\mu^\ast\|   \ \|Cv\|.
\end{equation}
Then,
\begin{align}\label{eq:est4quad}
F_{\rho}(v)- \min_{x \in \mathcal A^{\mathcal P}} F_{\rho}(x) 
&\geq \sum\nolimits_{s=1}^S \tfrac{1}{S}  \left\| Av_s - f \right\|_2^2 + 
\rho\|C v \|^2 - \min_{(y,\ldots,y) \in \mathcal B^{\mathcal P}} F_{\rho}(y,\ldots,y) \notag \\
&= \sum\nolimits_{s=1}^S \tfrac{1}{S}  \left\| Av_s - f \right\|_2^2 + 
\rho\|C v \|^2 - \min_{(y,\ldots,y) \in \mathcal B^{\mathcal P}} \left\| Ay - f \right\|_2^2  \notag \\
& \geq  \rho\|C v \|^2    - \|\mu^\ast\|   \ \|Cv\|.
\end{align}
For the first inequality we wrote down the definition of $F_{\rho}$ and restricted the set with respect to which the minimum is formed which results in a potentially larger function value.  
For the second inequality we notice that, for $(y,\ldots,y) \in \mathcal B^{\mathcal P}$, we have $C(y,\ldots,y)=0,$ and for the last inequality we employed \eqref{eq:EstfromLan}. 
Now, writing 
$ z^2  - \frac{\|\mu^\ast\|}{\|\rho\|} z$  
$=  z^2  - \frac{\|\mu^\ast\|}{\rho} z + \left(\tfrac{\|\mu^\ast\|}{2\rho}\right)^2  - \left(\tfrac{\|\mu^\ast\|}{2\rho}\right)^2$
$= (z - \tfrac{\|\mu^\ast\|}{2\rho} )^2 - \left(\tfrac{\|\mu^\ast\|}{2\rho}\right)^2 $ 
and plugging this into \eqref{eq:est4quad} with $z := \|Cv\|$ yields
\begin{align}\label{eq:est4quad2}
\tfrac{F_{\rho}(v)- \min_{x \in \mathcal A^{\mathcal P}} F_{\rho}(x)}{\rho} 
\geq (\|Cv\| - \tfrac{\|\mu^\ast\|}{2\rho} )^2 - \left(\tfrac{\|\mu^\ast\|}{2\rho}\right)^2, 
\end{align}
and hence 
\begin{align}\label{eq:est4quad3}
\left|\|Cv\| - \tfrac{\|\mu^\ast\|}{2\rho} \right| \leq \sqrt{\tfrac{F_{\rho}(v)- \min_{x \in \mathcal A^{\mathcal P}} F_{\rho}(x)}{\rho} + \left(\tfrac{\|\mu^\ast\|}{2\rho}\right)^2 } 
\leq \sqrt{\tfrac{F_{\rho}(v)- \min_{x \in \mathcal A^{\mathcal P}} F_{\rho}(x)}{\rho}} + \tfrac{\|\mu^\ast\|}{2\rho}
\end{align}
where the last inequality is a consequence of the fact that the unit ball w.r.t.\ the $\ell^1$ norm 
is contained in the the unit ball w.r.t.\ the $\ell^2$ norm.
As a consequence,
$
\|Cv\| \leq \sqrt{\tfrac{F_{\rho}(v)- \min_{x \in \mathcal A^{\mathcal P}} F_{\rho}(x)}{\rho}} + \tfrac{\|\mu^\ast\|}{2\rho} + \tfrac{\|\mu^\ast\|}{2\rho}
$
which completes the proof. 
\end{proof}

Next, we see that for any local minimizer of the quadratic penalty relaxation \eqref{eq:MinimizeMultVarRelax}, we can find a nearby feasible point using  
the projection procedure (Procedure~\ref{proc:ProjProc}) proposed in  Section~\ref{sec:ProposedSchemes}. 
Further, if the imaging operator $A$ is lower bounded, we find a nearby minimizer.

\begin{proposition}\label{pro:theoretResOnProc1}
	Procedure~\ref{proc:ProjProc} applied to a local minimizer $u'=(u'_1,\ldots,u'_S)$ of 
	the quadratic penalty relaxation \eqref{eq:MinimizeMultVarRelax} produces a feasible image $\hat u$ (together with a valid partitioning) for the Potts problem \eqref{eq:MinimizeMultVar}
	which is close to $u'$ in the sense that
	\begin{align}\label{eq:estDistProFeasible}
	\|u_s'-\hat u\| \leq C_1 \varepsilon \qquad \text{for all} \quad s \in \{1,\ldots,S\},
	\end{align}
	where $\varepsilon = \max_{s,s'} \|u'_s-u'_{s'}\|$ quantifies the deviation between the $u_s.$
	Here $C_1 = \# \Omega/4, $ where the symbol $\# \Omega$ denotes the number of elements in $\Omega.$

	If the imaging operator $A$ is lower bounded, 
	i.e., there is a constant $c>0$ such that $\|Au\| \geq c \|u\|$, 
	a local minimizer $u^\ast$ of the Potts problem \eqref{eq:MinimizeMultVar} is nearby, i.e.,
	\begin{align}
	\|u^\ast-\hat u\| \leq \frac{\sqrt \eta} {c} 
	\end{align}
	where
	\begin{equation}\label{eq:DefEtaPro}
	\eta :=  \left( \| A \|^2  \varepsilon C_1^2  
	+ 2 \| A \| C_1 \| f \|_2 \right) \varepsilon.  
	\end{equation}			
\end{proposition}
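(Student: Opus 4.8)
The plan is to treat the two assertions of the proposition in turn, using the characterizations of local minimizers in Lemma~\ref{lem:CharLocMinQuPenRelax} and Lemma~\ref{lem:CharLokMinPotts} together with a least‑squares identity. Write $\mathcal I$ for the directional partitioning induced by $u'$, $\mathcal P=\mathcal P_{\mathcal I}$ for the induced partitioning, and $g(v)=\|Av-f\|_2^2$. The feasibility claim is immediate: by \eqref{eq:DefProjectionPi} the image $\hat u=\pi_{\mathcal P}(u')$ is constant on every segment $\mathcal P_i$, hence piecewise constant w.r.t.\ $\mathcal P$; consequently $(\hat u,\ldots,\hat u)$ satisfies $u_1=\ldots=u_S$ and has finite Potts energy, so it is a feasible point of \eqref{eq:MinimizeMultVar} with the valid partitioning $\mathcal P$.

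For the distance estimate \eqref{eq:estDistProFeasible}, let $Q$ be the orthogonal projection onto the functions piecewise constant w.r.t.\ $\mathcal P$. From \eqref{eq:DefProjectionPi} one checks $\hat u=\tfrac1S\sum_{s'}Qu'_{s'}$, so that
\[
u'_s-\hat u \;=\; (u'_s-Qu'_s)\;+\;\tfrac1S\sum\nolimits_{s'}Q(u'_s-u'_{s'}).
\]
Since $\|Q\|\le 1$, the second term has norm at most $\tfrac1S\sum_{s'}\|u'_s-u'_{s'}\|\le\varepsilon$. For the first term, on each $\mathcal P_i$ the function $u'_s-Qu'_s$ is the deviation of $u'_s$ from its segmentwise mean, and Popoviciu's variance inequality gives $\sum_{x\in\mathcal P_i}|(u'_s-Qu'_s)(x)|^2\le\tfrac{\#\mathcal P_i}{4}\,\mathrm{osc}_{\mathcal P_i}(u'_s)^2$ — this is the origin of the factor $\tfrac14$. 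The oscillation of $u'_s$ over a segment is controlled by the merging structure of $\mathcal I$: any two pixels of $\mathcal P_i$ are joined by a path whose consecutive pixels share an interval of $\mathcal I$, and a step in direction $a_t$ leaves the $t$-th component unchanged, so telescoping through the components rewrites $|u'_s(x)-u'_s(y)|$ as a sum of pointwise discrepancies $|u'_s(z)-u'_t(z)|$ along the path, each estimated by $\|u'_s-u'_{s'}\|_2\le\varepsilon$. Collecting these bounds over all segments (using $\#\mathcal P_i\le\#\Omega$ for the path lengths) yields $\|u'_s-\hat u\|\le C_1\varepsilon$ with $C_1=\#\Omega/4$. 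I expect the bookkeeping needed to produce this explicit constant from the combinatorial path structure to be the main obstacle; the other steps are comparatively direct.

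For the statement about a nearby Potts minimizer, take $u^\ast$ to be a minimizer of $g$ over the subspace $\mathcal B^{\mathcal P}$ of functions piecewise constant w.r.t.\ $\mathcal P$. The partitioning induced by $u^\ast$ is coarser than or equal to $\mathcal P$, so minimality over $\mathcal B^{\mathcal P}$ entails minimality over the (smaller) subspace associated with that induced partitioning, whence Lemma~\ref{lem:CharLokMinPotts} shows $u^\ast$ is a local minimizer of the Potts problem. Because $A$ is lower bounded and, by orthogonality of the least‑squares residual, $g(v)-g(u^\ast)=\|A(v-u^\ast)\|_2^2$ for all $v\in\mathcal B^{\mathcal P}$, we get $\|u^\ast-\hat u\|\le\tfrac1c\sqrt{g(\hat u)-g(u^\ast)}$, so it suffices to bound $g(\hat u)-g(u^\ast)$ by $\eta$. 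By Lemma~\ref{lem:CharLocMinQuPenRelax}, $u'$ minimizes $F_{\rho}$ over $\mathcal A^{\mathcal P}$; evaluating $F_{\rho}$ at $(y,\ldots,y)\in\mathcal B^{\mathcal P}$, where the coupling term $\rho\|Cu\|^2$ vanishes, gives $F_{\rho}(u')\le\min_{\mathcal B^{\mathcal P}}g=g(u^\ast)$, hence
\[
g(\hat u)-g(u^\ast)\;\le\;g(\hat u)-F_{\rho}(u')\;\le\;\|A\hat u-f\|_2^2-\tfrac1S\sum\nolimits_s\|Au'_s-f\|_2^2
\]
after discarding the nonnegative coupling term in $F_{\rho}(u')$. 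From \eqref{eq:estDistProFeasible}, $\|A\hat u-f\|\le\|Au'_s-f\|+\|A\|C_1\varepsilon$ for every $s$; squaring, averaging over $s$, and using $\tfrac1S\sum_s\|Au'_s-f\|_2^2\le F_{\rho}(u')\le F_{\rho}(0,\ldots,0)=\|f\|_2^2$ (so that $\tfrac1S\sum_s\|Au'_s-f\|_2\le\|f\|_2$ by Jensen's inequality) gives $g(\hat u)-g(u^\ast)\le 2\|A\|C_1\|f\|_2\varepsilon+\|A\|^2C_1^2\varepsilon^2=\eta$, and therefore $\|u^\ast-\hat u\|\le\sqrt\eta/c$. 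These steps are routine once Lemmas~\ref{lem:CharLocMinQuPenRelax} and \ref{lem:CharLokMinPotts} and the least‑squares identity are in hand, so the proof hinges on the explicit oscillation estimate of the second paragraph.
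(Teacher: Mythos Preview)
Your argument is correct and follows essentially the same route as the paper. For the second assertion both proofs use Lemma~\ref{lem:CharLocMinQuPenRelax} to obtain $F_\rho(u')\le g(u^\ast)$, the triangle inequality $\|A\hat u-f\|\le\|Au'_s-f\|+\|A\|C_1\varepsilon$ together with $\tfrac1S\sum_s\|Au'_s-f\|\le\|f\|$, and the least-squares orthogonality identity combined with the lower bound on $A$; the only cosmetic difference is that the paper defines $u^\ast$ as the minimizer of $g$ over $\mathcal B^{\mathcal P'}$ for the (possibly coarser) partitioning $\mathcal P'$ induced by $\hat u$, whereas you work over $\mathcal B^{\mathcal P}$ directly and invoke the coarsening observation for Lemma~\ref{lem:CharLokMinPotts}---both are valid. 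For the first assertion the paper is considerably terser than you: it bounds the pointwise deviation $|(\bar u)_{ij}-(u'_s)_{ij}|$ by $\varepsilon$ and then asserts $\|u'_s-\hat u\|_2^2\le\varepsilon L$ with $L$ the maximal path length in the sense of Definition~\ref{def:EquivPath}, taking $L\le\#\Omega/4$ as a worst-case bound without further elaboration of the factor $1/4$. Your decomposition $u'_s-\hat u=(u'_s-Qu'_s)+\tfrac1S\sum_{s'}Q(u'_s-u'_{s'})$ together with Popoviciu's inequality and the path-telescoping bound on the oscillation is a more structured version of the same underlying idea and makes the origin of the $1/4$ explicit; as you correctly anticipate, the combinatorial bookkeeping needed to land exactly on $C_1=\#\Omega/4$ is the delicate step, and the paper does not spell it out either.
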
	

\begin{proof}
	We denote the directional partitioning induced by $u'$ by $\mathcal I$ 
	and the corresponding induced partitioning by $\mathcal P = \mathcal P_{\mathcal I}.$
	We note that Procedure~\ref{proc:ProjProc} applied to $u'$ precisely produces 
	\begin{align}
		(\hat u,\ldots,\hat u)= Q_\mathcal P u',		
	\end{align}
	with the projection $Q_\mathcal P$ given by \eqref{eq:DefProjectionQ}.
	We first note, that the average $(\bar u)_{ij} = \tfrac{1}{S} \sum_{s=1}^{S} (u_s')_{ij}$
	fulfills $|(\bar u)_{ij}- (u_s')_{ij}|<\varepsilon.$ 
	Further, the function value of $\hat u$ which is piecewise constant w.r.t.\ $\mathcal P$ is obtained by $\hat u|_{\mathcal P_i} = \sum_{x \in \mathcal P_i}\bar u(x)/ \# \mathcal P_i.$
	Hence, we may estimate
	\begin{align}
	 \|u_s'-\hat u\|_2^2 \leq \varepsilon   L,
	\end{align}
	where $L$ is the maximal length of a path connecting any two pixels as given  
	by Definition~\ref{def:EquivPath}.
	As a worst case estimate, we get $L\leq C_1$ where we define $C_1$ as one fourth of the number of elements in $\Omega,$ i.e., $C_1 = \tfrac{\# \Omega}{4}.$
	This shows \eqref{eq:estDistProFeasible}.
	
	For $F_{\rho}$ given by \eqref{eq:QuadraticRelaxOnAp}, we have 	 
	\begin{align}\label{eq:estFucValueFrho}
	F_{\rho}(u') &\leq   F_{\rho}(\hat u,\ldots,\hat u) 
	= \sum\nolimits_{s=1}^S \tfrac{1}{S}  \left\| A\hat u - f \right\|_2^2 \notag\\ 
    &\leq \sum\nolimits_{s=1}^S \tfrac{1}{S}  
     \left(\left\| A\hat u  - A u_s'  \right\|_2  +  \left\| Au_s' - f \right\|_2\right)^2 \notag \\
	&\leq \sum\nolimits_{s=1}^S \tfrac{1}{S}  
	\left(\| A \|  \varepsilon C_1  +  \left\| Au_s' - f \right\|_2\right)^2  \\
	&\leq \| A \|^2  \varepsilon^2 C_1^2  
	+ 2 \| A \|  \varepsilon C_1 \sum\nolimits_{s=1}^S \tfrac{1}{S}  
	\left\| Au_s' - f \right\|_2 
	+  \sum\nolimits_{s=1}^S \tfrac{1}{S}  
	  \left\| Au_s' - f \right\|_2^2  \notag\\
	&\leq \eta + F_{\rho}(u'),  \notag
	\end{align}
	with
	\begin{equation}
		\eta =  \left( \| A \|^2  \varepsilon C_1^2  
		+ 2 \| A \| C_1 \| f \|_2 \right) \varepsilon, 
	\end{equation}		
	as given in \eqref{eq:DefEtaPro}.
	The first inequality holds since as a local minimizer of the quadratic penalty relaxation \eqref{eq:MinimizeMultVarRelax}, 
	$u'$ is the global minimizer of $F_{\rho}$ on $\mathcal A^{\mathcal P}$
	by Lemma~\ref{lem:CharLocMinQuPenRelax} and since $(\hat u,\ldots,\hat u) \in \mathcal A^{\mathcal P}$ by construction. The next inequalities apply the triangle inequality and estimates on matrix norms. The last inequality is a consequence of the fact that 
	$\sum\nolimits_{s=1}^S \tfrac{1}{S}  
	\left\| Au_s' - f \right\|_2 \leq \|f\|_2.$ Otherwise, if $\| Au_s' - f \|_2 > \|f \|_2,$ choosing $u_s'=0$  would yield a lower function value which would contradict the minimality of $u'.$
	
	Now consider the partitioning $\mathcal P'$ induced by $\hat u,$ and the corresponding minimizer 
	$u^\ast,$ i.e.,
	\begin{align}
	    (u^\ast,\ldots,u^\ast)  = \argmin_{u \in \mathcal B^{\mathcal P'}} F_{\rho}(u)
	\end{align}
	where, for $(u,\ldots,u) \in \mathcal B^{\mathcal P'},$ we have 
	$F_{\rho}(u,\ldots,u) =  \left\| Au - f \right\|_2^2.$
	By Lemma~\ref{lem:CharLokMinPotts}, $u^\ast$ is a local minimizer of the Potts problem \eqref{eq:PottsDiscrete}.
	On the other hand, by orthogonality in an inner product space, we have   
	\begin{equation}
		Au^\ast  =  P_{A \left(\mathcal B^{\mathcal P'}\right)} f, \quad\text{ and }\quad 
		\|f- P_{A \left(\mathcal B^{\mathcal P'}\right)} f\|^2 = \min_{u \in \mathcal B^{\mathcal P'}} F_{\rho}(u),
	\end{equation}
	where  $P_{A \left(\mathcal B^{\mathcal P'}\right)}$ denotes the orthogonal projection onto the image of 
	$\mathcal B^{\mathcal P'}$ under the linear mapping $A.$
	Thus,
	\begin{align}
		\|A \hat u  - Au^\ast\|^2 &=  \|A \hat u  - P_{A \left(\mathcal B^{\mathcal P'}\right)} f\|^2 \notag \\
		&= \|A \hat u  - f\|^2 - \|f- P_{A \left(\mathcal B^{\mathcal P'}\right)} f\|^2 
		= \|A \hat u  - f\|^2 - \|A  u^\ast  - f\|^2. 		
	\end{align} 
	Inserting $u^\ast$ in the estimate \eqref{eq:estFucValueFrho}, we get 
	\begin{align}\label{eq:estFucValueFrho2}
	F_{\rho}(u') \leq  F_{\rho}(u^\ast,\ldots,u^\ast) \leq  F_{\rho}(\hat u,\ldots,\hat u) 
	\leq \eta + F_{\rho}(u')  \leq \eta + F_{\rho}(u^\ast,\ldots,u^\ast).
	\end{align}
	This allows us to further estimate 
	\begin{align}
	\|A \hat u  - Au^\ast\|^2 &= \|A \hat u  - f\|^2 - \|A  u^\ast  - f\|^2 
	\leq \|A u^\ast  - f\|^2+ \eta  - \|A  u^\ast  - f\|^2 = \eta.
	\end{align}
	If now the operator $A$ is lower bounded, then 
	\begin{align}
	\|\hat u  - u^\ast\|^2 < \frac{1}{c^2}  \|A \hat u  - Au^\ast\|^2 \leq \frac \eta {c^2}
	\end{align}
	which completes the proof.
\end{proof}

\subsection{Majorization-minimization for multivariate Potts problems} \label{sec:MajoMin}

In this part we build the basis for the convergence analysis of Algorithm~\ref{alg:Algo1} and Algorithm~\ref{alg:Algo2}.

We first recall some basics on surrogate functionals.
We consider functionals 
$F(u)$ of the form $F(u) = \|Xu-z\|^2+ \gamma J(u),$ where $X$ is a given (measurement) matrix with 
operator norm $\|X\|<1$ (with the operator norm formed w.r.t.\ the $\ell^2$ norm),
 $z$ is a given vector (of data), 
 $J$ is an arbitrary (not necessarily convex) lower semicontinuous functional, 
 and $\gamma>0$ is a parameter.  
In general, the surrogate functional $F^{\rm surr}(u,v)$ of $F(u)$ is given by
\begin{equation}\label{eq:DefSurr}
	F^{\rm surr}(u,v) = F(u) + \|u-v\|^2 - \|Xu-Xv\|^2. 
\end{equation}

\begin{lemma}\label{lem:Nonincreasing}
	Consider the functionals $F(u) = \|Xu-z\|^2+ \gamma J(u)$ as above with $\|X\|<1.$
	(For our purposes, $J$ is the regularizer 
	$\|D(u)\|_{0,\omega}$ given by \eqref{eq:DefDiffOpZeroNorm}.)  
	Then, we get for the associated surrogate functional $F^{\rm surr}$ given by \eqref{eq:DefSurr} (with $J$ as regularizer), that
	\begin{enumerate}
		\item
		the inequality 
		\[
		F^{\rm surr}(u,v) \geq F(u)
		\]
		holds for all $v;$ and $F^{\rm surr}(u,v) = F(u)$ holds if and only if $u=v;$
		\item 
		the functional values $F(u^k)$ of the sequence $u^k$ given by the surrogate iteration $u^{k+1} = \argmin_u F^{\rm surr}(u,u^k)$ are non-increasing, i.e., 
		\begin{equation}\label{eq:SurrogateDecreasing}
		F(u^{k+1}) \leq F(u^{k});
		\end{equation}
		\item
		the distance between consecutive members of the previous surrogate sequence $u^k$ converges to $0,$ i.e.,
		\begin{equation}\label{eq:DistToZero4Surrogate}
		\lim_{k \to \infty} \|u^{k+1}-u^k\| = 0. 
		\end{equation}
	\end{enumerate}
	\end {lemma}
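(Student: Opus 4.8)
The plan is to verify the three claims in order, all of which rest on the algebraic identity that $F^{\mathrm{surr}}(u,v) - F(u) = \|u-v\|^2 - \|Xu - Xv\|^2 = \|x\|^2 - \|Xx\|^2$ with $x = u-v$. Since $\|X\| < 1$ by hypothesis, we have $\|Xx\|^2 \le \|X\|^2 \|x\|^2 < \|x\|^2$ whenever $x \ne 0$, so this quantity is nonnegative and vanishes if and only if $u = v$; this gives claim (i) immediately. Moreover, writing $c := 1 - \|X\|^2 > 0$, we get the stronger quantitative bound
\begin{equation}\label{eq:surrStrongBound}
F^{\mathrm{surr}}(u,v) - F(u) \ge c\,\|u-v\|^2,
\end{equation}
which will be the engine for claim (iii).

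For claim (ii), I would use the two defining properties of the surrogate iteration. Since $u^{k+1}$ minimizes $u \mapsto F^{\mathrm{surr}}(u,u^k)$, we have $F^{\mathrm{surr}}(u^{k+1},u^k) \le F^{\mathrm{surr}}(u^k,u^k)$. By claim (i), the right-hand side equals $F(u^k)$ and the left-hand side is at least $F(u^{k+1})$. Chaining these gives $F(u^{k+1}) \le F^{\mathrm{surr}}(u^{k+1},u^k) \le F(u^k)$, which is exactly \eqref{eq:SurrogateDecreasing}. Here I should note that the minimizer $u^{k+1}$ exists because $J = \|D(\cdot)\|_{0,\omega}$ is lower semicontinuous and the quadratic part is coercive (one can invoke the same existence reasoning behind Theorem~\ref{thm:ExistenceMinimizer}, or simply observe the surrogate functional is of the same type as the discrete Potts functional up to an affine change of variables), so the argmin is well-defined.

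For claim (iii), I would combine the monotonicity of claim (ii) with the quantitative bound \eqref{eq:surrStrongBound}. Inserting $u = u^{k+1}$, $v = u^k$ into \eqref{eq:surrStrongBound} and using $F^{\mathrm{surr}}(u^{k+1},u^k) \le F(u^k)$ from the argmin property yields
\begin{equation}\label{eq:telescope}
c\,\|u^{k+1}-u^k\|^2 \le F^{\mathrm{surr}}(u^{k+1},u^k) - F(u^{k+1}) \le F(u^k) - F(u^{k+1}).
\end{equation}
Now $F$ is bounded below (it is a sum of a nonnegative quadratic term and $\gamma J \ge 0$), and by claim (ii) the sequence $F(u^k)$ is non-increasing, hence convergent; therefore the right-hand side of \eqref{eq:telescope} is the general term of a telescoping series whose partial sums are bounded, so it tends to $0$. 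Dividing by $c > 0$ gives $\|u^{k+1} - u^k\|^2 \to 0$, which is \eqref{eq:DistToZero4Surrogate}.

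I do not anticipate a genuine obstacle here; the only point requiring a word of care is the existence of the surrogate minimizer at each step (needed to make the iteration well-defined) and the boundedness-below of $F$, both of which follow from the structure of the discrete Potts-type functional already established earlier in the paper. The rest is the standard majorization–minimization argument specialized to the quadratic-plus-lsc setting, and the factor $c = 1 - \|X\|^2$ (positive precisely because of the contractivity assumption $\|X\| < 1$, which in the application is ensured by the scaling $B/L_\rho$ via Lemma~\ref{lem:BbyLcontractive}) is what upgrades the qualitative decrease into the summability needed for \eqref{eq:DistToZero4Surrogate}.
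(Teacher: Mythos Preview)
Your argument is correct and is precisely the standard majorization--minimization proof. Note, however, that the paper does not actually supply its own proof of this lemma: immediately after the statement it remarks that ``Proofs of the general statements above on surrogate functionals \ldots\ may for instance be found in the above mentioned papers \cite{daubechies2004iterative,blumensath2008iterative,fornasier2010iterative},'' so there is nothing to compare against beyond observing that your write-up is exactly the classical argument those references contain.
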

	We note that --when minimizing $F$-- the condition $\|X\|<1$ can always be achieved by rescaling, i.e.,
	dividing the functional $F$ by a number which is larger than $\|X\|^2.$ 	
	Proofs of the general statements above on surrogate functionals
	(which do not rely on the specific structure of the problems considered here)  
	may for instance be found in the above mentioned papers 
	\cite{daubechies2004iterative,blumensath2008iterative,fornasier2010iterative}.

We now employ properties of the quadratic penalty relaxation 
$P_{\gamma, \rho}(u_1,\ldots,u_S)$ of the Potts energy given by \eqref{eq:MinimizeMultVarRelax}.
The strategy is similar to the authors' approach for the univariate case in \cite{weinmann2015iterative}.
We first show that the minimizers of $P_{\gamma, \rho}(u_1,\ldots,u_S)$ (with $B= \mathrm{id}$ in \eqref{eq:RelaxedProblemInShortNotation}) 
which are precisely the solutions of \eqref{eq:surIt4MultPottsRelaxed}
have a minimal directional jump height which only depends on the scale parameter $\gamma,$
the directional weights $\omega_s$ and the constant $L_{\rho}$
but not on the particular input data. 
Here, for the multivariate discrete function $u= (u_1,\ldots,u_S)$ (and the directional system $a_s,$ $s=1,\ldots,S$) a directional jump is a jump in the $s$th component $u_s$ in direction $a_s$ for some $s.$ In particular, jumps of $u_s$ in directions $a_{s'}$ with $s'\neq s$ are not considered.

\begin{lemma}\label{lem:MinimalJumpHeight}
	We consider the function
	$P_{\gamma, \rho}(u_1,\ldots,u_S)$ of \eqref{eq:RelaxedProblemInShortNotation}
	for the choice $B=\id$ and data $h=(h_1,\ldots,h_S)$.	
	In other words, we consider the problem \eqref{eq:surIt4MultPottsRelaxed} for  
	arbitrary data $h=(h_1,\ldots,h_S)$.
	Then there is a constant $c>0$ which is independent of the minimizer $u^\ast =(u_1^\ast,\ldots,u_S^\ast)$ of \eqref{eq:surIt4MultPottsRelaxed} and the data $h$ 
	such that the minimal directional jump height $j_{\min}(u^\ast)$ (w.r.t. the directional system $a_s,$ $s=1,\ldots,S,$) of a minimzer $u^{\ast}$  fulfills
	\begin{equation}\label{eq:MinimalJumpHight}
	j_{\min}(u^\ast) \geq c.
	\end{equation}
	The constant $c$ depends on $\gamma,$
	the directional weights $\omega_s$ and the constant $L_{\rho}.$
\end{lemma}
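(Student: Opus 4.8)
The plan is to reduce the backward-step subproblem to a family of independent univariate direct Potts problems and to prove the bound there. First I would note that the objective in \eqref{eq:surIt4MultPottsRelaxed} is a sum over $s$ of terms, the $s$-th of which depends only on $u_s$; hence a minimizer $u^{\ast}=(u_1^{\ast},\ldots,u_S^{\ast})$ has the property that each $u_s^{\ast}$ minimizes $\|u_s-h_s\|_2^2+\gamma'_s\,\|\nabla_{a_s}u_s\|_0$ with $\gamma'_s=\gamma\omega_s/L_{\rho}^2$, i.e.\ the directional Potts problem \eqref{eq:minDirectionalPotts}. By the discussion in Section~\ref{sec:ExactSubPotts}, this problem decouples further into univariate direct Potts problems \eqref{eq:univariatePottsProblem} along the discrete lines of direction $a_s$ contained in $\Omega$: both $\|u_s-h_s\|_2^2$ and $\|\nabla_{a_s}u_s\|_0$ are sums over these lines. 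Consequently the restriction of $u_s^{\ast}$ to any such line $\ell$ is a minimizer of $\|x-g\|_2^2+\gamma'_s\,\|\nabla x\|_0$ with $g=h_s|_{\ell}$, and every directional jump of $u^{\ast}$ is a jump of one of these univariate minimizers. It thus suffices to show: for the univariate direct Potts problem $\|x-g\|_2^2+\beta\,\|\nabla x\|_0$ on a discrete domain of length $m$, every jump of a minimizer has height at least $\sqrt{\beta/m}$.

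For the univariate statement I would proceed as follows. Fix a minimizer $x^{\ast}$ (it exists and is unique, as recalled in Section~\ref{sec:ExactSubPotts}). On every maximal constant segment $S$ of $x^{\ast}$, the value of $x^{\ast}$ must equal the arithmetic mean $\bar g_S$ of $g$ over $S$, for otherwise replacing it by $\bar g_S$ strictly decreases the data term without increasing the jump count, contradicting minimality. Now suppose $x^{\ast}$ has a jump between two adjacent maximal segments $S_1$, $S_2$ of lengths $n_1$, $n_2$, so that it takes the distinct values $\bar g_{S_1}\neq\bar g_{S_2}$ there. Consider the competitor $\tilde x$ that agrees with $x^{\ast}$ outside $S_1\cup S_2$ and equals the common mean $\bar g=\frac{n_1\bar g_{S_1}+n_2\bar g_{S_2}}{n_1+n_2}$ on $S_1\cup S_2$. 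This removes the jump between $S_1$ and $S_2$ (and possibly merges with further neighbours), so $\|\nabla\tilde x\|_0\le\|\nabla x^{\ast}\|_0-1$, while by the usual variance decomposition the data term grows by exactly $\frac{n_1 n_2}{n_1+n_2}(\bar g_{S_1}-\bar g_{S_2})^2$. Minimality of $x^{\ast}$ forces $\frac{n_1 n_2}{n_1+n_2}(\bar g_{S_1}-\bar g_{S_2})^2\ge\beta$; since $\frac{n_1 n_2}{n_1+n_2}\le\min(n_1,n_2)\le m$, this yields $(\bar g_{S_1}-\bar g_{S_2})^2\ge\beta/m$, i.e.\ a jump height of at least $\sqrt{\beta/m}$.

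Putting the two steps together with $\beta=\gamma'_s=\gamma\omega_s/L_{\rho}^2$ and with $m$ bounded by the largest length $\ell_{\max}$ of a discrete line of a direction $a_s$ inside $\Omega$, every directional jump of $u^{\ast}$ has height at least $c:=\big(\gamma\min_s\omega_s\big)^{1/2}\big(L_{\rho}^2\,\ell_{\max}\big)^{-1/2}>0$; if $u^{\ast}$ has no directional jump at all, the assertion is vacuous. This $c$ is manifestly independent of the data $h$ and of the chosen minimizer and depends only on $\gamma$, the weights $\omega_s$, $L_{\rho}$ (and, through $\ell_{\max}$, on the fixed grid $\Omega$). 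The one point that requires care is the competitor argument in the univariate step — in particular verifying that the merge never creates a new jump so that the penalty really decreases by at least one unit, and observing that the estimate inevitably carries the segment-length factor, which is why $c$ depends on the (fixed) size of $\Omega$. The reduction to the univariate problem and the variance identity are routine.
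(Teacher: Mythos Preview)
Your proof is correct and follows essentially the same route as the paper: both construct a competitor by merging the two segments adjacent to a putative small jump, use the variance decomposition $n_1(\bar g_{S_1}-\bar g)^2+n_2(\bar g_{S_2}-\bar g)^2=\tfrac{n_1n_2}{n_1+n_2}(\bar g_{S_1}-\bar g_{S_2})^2$ to control the data-term increase, and balance it against the drop of one weighted jump to obtain $c=\sqrt{\gamma\min_s\omega_s/(L_\rho^2\,\ell_{\max})}$ (the paper's $W$ plays the role of your $\ell_{\max}$). The only cosmetic difference is that you first make the reduction to independent univariate Potts problems explicit, whereas the paper performs the same one-line modification directly inside the multivariate functional.
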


\begin{proof}
	Writing $ u = (u_1,\ldots,u_S)$ we restate \eqref{eq:surIt4MultPottsRelaxed} as the problem of minimizing
	\begin{equation}
		P^{\mathrm{id}}_{\gamma/L_{\rho}^2}(u_1,\ldots,u_S) = \left\| u - h \right\|_2^2 + \frac{\gamma}{L_{\rho}^2} \ \Big\| \ D(u_1,\ldots,u_S) \ \Big\|_{0,\omega}
	\end{equation}
	where we use the notation 
	$\|D(u_1,\ldots,u_S)\|_{0,\omega} = \sum _{s=1}^S \omega_s  \left\| \nabla_{a_s} u_s \, \right\|_0$
	introduced in \eqref{eq:DefDiffOpZeroNorm}.
	We let  
	\begin{equation}\label{eq:ChoiseLittleC}
	c = \sqrt{\tfrac{\gamma \ \min_{s \in\{1,\ldots,S\}} \omega_s  }{L_{\rho}^2 W}},
	\end{equation}
	where $W$ denotes the maximal length of the signal $u$ per dimension (e.g., if $u$ denotes an $l \times b$ image, then $W=\max(l,b)$.) 
	We now assume that $h_{\min}(u^\ast) < c,$
	which means that the minimizer $u^\ast$ has a directional jump of height
	smaller than $c.$ 
	For such $u^\ast,$ we construct an element $u'$ with a smaller $P^{\mathrm{id}}_{\gamma/L_{\rho}^2}$ value
	which yields a contradiction since $u^\ast$ is a minimizer of $P^{\mathrm{id}}_{\gamma/L_{\rho}^2}$. 
	To this end, we let $a_s$ be a direction such that the component $u^\ast_s$ of $u^\ast$ has a jump of height smaller than $c.$
	We denote the (discrete) directional intervals in direction $a_s$ near the directional jump by $I_1,I_2$
	and the corresponding points near the directional jump of $u_s^\ast$ by $x_1$ and $x_2.$
	We let $m_1,m_2$ and $m$ be the mean of $h_s$ on $I_1,I_2$ and $I_1 \cup I_2$, respectively.  
	We define 
	\begin{equation} \label{eq:DefUprime}
	u_{s'}' = u_{s'}^\ast  \quad \text{if} \quad s' \neq s,
	\qquad \text{ and } \qquad
	  u_s'(x) = 
	\begin{cases}
	m   & \text{ for } x \in I_1 \cup I_2  \\
	u_s^\ast (x)     & \text{ elsewhere. }
	\end{cases}           
	\end{equation}
	By construction,  $\|\nabla u'\|_0 = \|\nabla u^\ast\|_0 -1.$
	and thus 
	\begin{equation} \label{eq:estJmpsWithweights}
	\|D(u'_1,\ldots,u'_S)\|_{0,\omega}  =    \|D(u^\ast_1,\ldots,u^\ast_S)\|_{0,\omega} - \omega_s
	\leq \|D(u^\ast_1,\ldots,u^\ast_S)\|_{0,\omega} - \min_{s \in\{1,\ldots,S\}} \omega_s.
	\end{equation}
	Since $u^\ast$ is a minimizer of $P^{\mathrm{id}}_{\gamma/L_{\rho}^2},$ 
	its $s$th component $u^\ast_s$ equals $m_1$ on $I_1$ and $m_2$ on $I_2.$ 
	Further, as $u_{s'}' = u_{s'}^\ast$ if $s' \neq s$
	and $u_s^\ast$ and $u_s'$ only differ on $I_1 \cup I_2,$
	we have that
	\begin{align}\label{eq:estNrm}\notag
	\|u'-h\|^2 = \sum_{s'=1}^S \|u_{s'}'-h_{s'}\|^2 &= \sum_{s'=1, s' \neq s }^S \|u_{s'}^\ast-h_{s'}\|^2
	+\|u_s^\ast-h_s\|^2 + l_1 |m_1-m|^2 + l_2 |m_2-m|^2 \\
	&< \|u^\ast-h\|^2 + W  c^2, 
	\end{align}
	where $l_1,l_2$ denote the length of $I_1$ and $I_2,$ respectively.
	Employing \eqref{eq:estJmpsWithweights} together with \eqref{eq:estNrm} we get
	\begin{align*}
	P^{\mathrm{id}}_{\gamma/L_{\rho}^2}(u'_1,\ldots,u'_S) &= \left\|  u' - h \right\|_2^2 + \frac{\gamma}{L_{\rho}^2} \ \Big\| \ D(u'_1,\ldots,u'_S) \ \Big\|_{0,\omega} \\
	& < \|u^\ast-h\|^2 + W  c^2 +  \frac{\gamma}{L_{\rho}^2}\|D(u^\ast_1,\ldots,u^\ast_S)\|_{0,\omega} - \frac{\gamma}{L_{\rho}^2}\min_{s \in\{1,\ldots,S\}} \omega_s \\
	& \leq \|u^\ast-h\|^2 +   \frac{\gamma}{L_{\rho}^2}\|D(u^\ast_1,\ldots,u^\ast_S)\|_{0,\omega} = 
	P^{\mathrm{id}}_{\gamma/L_{\rho}^2}(u^\ast_1,\ldots,u^\ast_S).
	\end{align*} 
	The validity of the last inequality follows by \eqref{eq:ChoiseLittleC}.
	Together, $u'$ has a smaller function value than $u^\ast$
	which is a contradiction to $u^\ast$ being a minimizer which shows the assertion.
\end{proof}

\begin{proposition}\label{pro:RelaxedPottsConverges2LocMinimizer}	
	The iteration \eqref{eq:backwardStepAlg1} of Algorithm~\ref{alg:Algo1} converges to a local minimizer 
	of the quadratic penalty relaxation 
	$P_{\gamma, \rho}$ of the Potts objective function given by \eqref{eq:MinimizeMultVarRelax}.
	The convergence rate is linear.
\end{proposition}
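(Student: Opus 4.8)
The plan is to read off \eqref{eq:backwardStepAlg1} as the surrogate (forward--backward) iteration for $P_{\gamma,\rho}$ written in the form \eqref{eq:RelaxedProblemInShortNotation}, and to combine the generic descent properties of such iterations with the data-independent lower bound on directional jump heights of the backward step. First I would set $X=B/L_\rho$ and $z=g/L_\rho$: by Lemma~\ref{lem:BbyLcontractive} together with the choice of $L_\rho$ in \eqref{eq:EstBgeneral} one has $\|X\|=\|B\|/L_\rho<1$, and by \eqref{eq:SurrInExplicitForm}--\eqref{eq:surIt4MultPottsRelaxed} the surrogate iteration $u^{(n+1)}\in\argmin_u P_{\gamma,\rho}^{\rm surr}(u,u^{(n)})$ is precisely \eqref{eq:backwardStepAlg1}. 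Hence Lemma~\ref{lem:Nonincreasing} applies and yields that $P_{\gamma,\rho}(u^{(n)})$ is non-increasing and that $\|u^{(n+1)}-u^{(n)}\|\to 0$.

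Next I would prove that the directional partitioning induced by $u^{(n)}$ becomes stationary after finitely many steps. For $n\geq 1$ the iterate $u^{(n)}$ is a minimizer of a backward subproblem of the type treated in Lemma~\ref{lem:MinimalJumpHeight}, so there is a constant $c>0$ depending only on $\gamma$, the weights $\omega_s$ and $L_\rho$ such that every directional jump of $u^{(n)}$ has height at least $c$. Choose $N$ with $\|u^{(m+1)}-u^{(m)}\|_\infty<c/2$ for all $m\geq N$, which is possible since $\ell^2$-convergence implies $\ell^\infty$-convergence in the finite-dimensional ambient space. If $u_s^{(m)}$ jumps across an $a_s$-adjacent pair of pixels, the two values differ by at least $c$, hence by more than $c-2\cdot(c/2)=0$ for $u_s^{(m+1)}$ as well; thus directional jumps can only be created, never destroyed, once $m\geq N$. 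Since there are only finitely many admissible jump locations, the directional partitioning is eventually constant, equal to some $\mathcal I$ with induced partitioning $\mathcal P=\mathcal P_{\mathcal I}$, and $u^{(m)}$ lies in $\mathcal A^{\mathcal P}$ for all large $m$.

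On this stationary phase the weighted jump count $\|D(u^{(m)})\|_{0,\omega}$ is a fixed constant on $\mathcal A^{\mathcal P}$, so \eqref{eq:backwardStepAlg1} reduces to the forward--backward/Landweber iteration $u^{(m+1)}=P_{\mathcal I}\big(u^{(m)}-\tfrac1{L_\rho^2}B^*(Bu^{(m)}-g)\big)$, with $P_{\mathcal I}$ the averaging projection of \eqref{eq:DefProjection}, for the smooth convex quadratic $F_\rho$ of \eqref{eq:QuadraticRelaxOnAp} restricted to the finite-dimensional subspace $\mathcal A^{\mathcal P}$ and step size $\tfrac1{L_\rho^2}<\tfrac1{\|B\|^2}$. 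By the classical convergence theory of gradient/Landweber iterations for quadratic functionals this iteration converges to a minimizer $u^\ast$ of $F_\rho$ over $\mathcal A^{\mathcal P}$; writing $e^{(m)}=u^{(m)}-u^\ast$ gives $e^{(m+1)}=(I-\tfrac1{L_\rho^2}P_{\mathcal I}B^*BP_{\mathcal I})e^{(m)}$, where the symmetric iteration matrix has spectrum in $[0,1]$ and equals the identity exactly on $\ker B\cap\mathcal A^{\mathcal P}$. Convergence $e^{(m)}\to 0$ forces $e^{(N)}$ to be orthogonal to $\ker B\cap\mathcal A^{\mathcal P}$, and on its orthogonal complement the iteration matrix has norm $\sigma<1$, so $\|e^{(m)}\|\leq\sigma^{m-N}\|e^{(N)}\|$, i.e.\ linear convergence. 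Finally, Lemma~\ref{lem:MinimalJumpHeight} applied in the limit shows that jumps of height $\geq c$ survive in $u^\ast$, so $\mathcal P$ is exactly the partitioning induced by $u^\ast$; the converse part of Lemma~\ref{lem:CharLocMinQuPenRelax} then identifies $u^\ast$ as a local minimizer of $P_{\gamma,\rho}$.

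The main obstacle is the finite-time stabilization of the directional partitioning: it requires the uniform lower bound from Lemma~\ref{lem:MinimalJumpHeight} to be coupled correctly with $\|u^{(n+1)}-u^{(n)}\|\to 0$, and one must carefully check that the backward-step minimizer retains (not merely cannot lose) the jump structure and keep track of the bookkeeping between $\mathcal I$, $\mathcal P_{\mathcal I}$ and the subspace $\mathcal A^{\mathcal P}$. A secondary technical point is the linear rate when $B$ is not injective on $\mathcal A^{\mathcal P}$, which is resolved by the eigenspace decomposition above, using that the error component in the unit-eigenvalue eigenspace must vanish because the iterates converge.
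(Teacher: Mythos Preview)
Your proposal is correct and follows essentially the same three-step structure as the paper's own proof: use Lemma~\ref{lem:Nonincreasing} together with the uniform minimal jump height of Lemma~\ref{lem:MinimalJumpHeight} to freeze the directional partitioning after finitely many steps, reduce the tail of the iteration to a Landweber/projected-gradient scheme on $\mathcal A^{\mathcal P}$ with linear convergence, and then invoke the converse part of Lemma~\ref{lem:CharLocMinQuPenRelax} to identify the limit as a local minimizer of $P_{\gamma,\rho}$. Your monotone jump-set argument for stabilization and your explicit eigenspace decomposition for the linear rate (handling the case where $B$ is not injective on $\mathcal A^{\mathcal P}$) are minor variations on the paper's phrasing, but the overall route is the same.
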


\begin{proof}
	
	We divide the proof into three parts. First, we show that the directional partitionings induced by the iterates $u^{(n)}$ become fixed
	after sufficiently many iterations. In a second part, we derive the convergence of Algorithm~\ref{alg:Algo1}
	and, in a third part, we show that the limit point is a local minimizer of	$P_{\gamma, \rho}$.
	
	(1) We first show that the directional partitioning $\mathcal I^n$ 
	induced by the iterates $u^{(n)}$ gets fixed for large $n$.
	For every $n \in \mathbb N,$ the iterate $u^{(n)}$ of Algorithm~\ref{alg:Algo1} is a minimizer of 
	the function
	$P_{\gamma, \rho}$ of \eqref{eq:RelaxedProblemInShortNotation}
	for the choice $B=\id$ as it appears in  \eqref{eq:surIt4MultPottsRelaxed}.
	Here, the data $h=(h_1,\ldots,h_S)$ is given by \eqref{eq:dataInSurrIT}.
	By Lemma~\ref{lem:MinimalJumpHeight}
	there is a constant $c>0$ which is independent of the particular $u^{(n)} =(u_1^{(n)},\ldots,u_S^{(n)})$ of \eqref{eq:surIt4MultPottsRelaxed} and the data $h$ 
	such that the minimal directional jump height $j_{\min}(u^{(n)})$ fulfills
	\begin{equation}\label{eq:MinimalJumpHight2}
	j_{\min}(u^{(n)}) \geq c  \quad \text{ for all }\quad n \in \mathbb N.
	\end{equation}
	We note that the parameter $\gamma,$
	the directional weights $\omega_s$ and the constant $L_{\rho}$
	which the constant $c$ depends on by Lemma~\ref{lem:MinimalJumpHeight}
	do not change during the iteration of Algorithm~\ref{alg:Algo1}. 
	
	If two iterates $u^{(n)},u^{(n+1)}$ have 
	different induced directional partitionings $\mathcal I^n, \mathcal I^{n+1}$
	their $\ell^\infty$ distance always fulfills $\|u^{(n)}-u^{(n+1)}\|_\infty> c/2$
	since both $u^{(n)},u^{(n+1)}$ have minimal jump height of at least $c$ and different
	induced directional partitionings.
	This implies $\|u^{(n)}-u^{(n+1)}\|_2> c/2$  for the $\ell^2$ distance as well.
	This may only happen for small $n,$ since 	Lemma~\ref{lem:Nonincreasing}
	by \eqref{eq:DistToZero4Surrogate}, we have $\|u^{(n)}-u^{(n+1)}\|_2 \to 0$ as $n$
	increases. Hence, there is an index $N$ such that, for all $n \geq N,$
	the directional partitionings $\mathcal I^n$ are identical.
 
	(2) We use the previous observation to show the convergence of Algorithm~\ref{alg:Algo1}.
	We consider iterates $u^{(n)}$ with $n \geq N;$ 
	they have the same induced directional partitionings which we denote by $\mathcal I',$ 
	and all jumps have minimal jump height $c$.
	Hence, for $n \geq N,$  
	the iteration of \eqref{eq:surIt4MultPottsRelaxed} can be written as 
	\begin{equation}\label{eq:ProjectionAfterFixation}
	u^{(n+1)} = P_{\mathcal I'}(h^{(n)}) 
	\end{equation}
	with $P_{\mathcal I'}$ being the orthogonal projection onto the $\ell^2$ space $\mathcal A^{\mathcal P}$ consisting of functions which are piecewise constant w.r.t.\ the directional partitioning $\mathcal I',$ and where $h^{(n)}$ depends on $u^{(n)}$ via	
	\begin{equation}
	h^{(n)}_s = u^{(n)}_s + \tfrac{1}{SL_{\rho}^2} A^\ast  f - 
	\tfrac{1}{S L_{\rho}^2} A^\ast A u^{(n)}_s - \sum_{s':s' \neq s}\tfrac{\rho_{s,s'}}{L_{\rho}^2}
	(u^{(n)}_s-u^{(n)}_{s'}),  
	\quad \text{ for all } s \in\{1,\ldots,S\},
	\end{equation}
	as given by \eqref{eq:dataInSurrIT}. As introduced before, we use  the symbols $\tilde A$ to denote the block diagonal matrix with constant entry $A$ on each diagonal component, and
	$\tilde f$ for the block vector of corresponding dimensions with entry $f$ in each component.
	With this notation we may write \eqref{eq:ProjectionAfterFixation} as 
	\begin{equation}\label{eq:ProjectionAfterFixation2}
	u^{(n+1)}  = P_{\mathcal I'}((I - \tfrac{1}{SL_{\rho}^2} (\tilde A)^{T} \tilde A- \tfrac{1}{SL_{\rho}^2} \rho \ C^TC) u^{(n)} + \tfrac{1}{SL_{\rho}^2} \tilde A^{T}\tilde f).
	\end{equation}
	Since $u^{(n)}$ is piecewise constant w.r.t.\ the directional partitioning $\mathcal I',$ we have   $u^{(n)} = P_{\mathcal I'} u^{(n)}.$ Using this fact and the fact that $P_{\mathcal I'}$ is an orthogonal projection we obtain 
	\begin{equation}\label{eq:ProjectionAfterFixation3}
	u^{(n+1)}  = \left(I -  
	\left(
	\left(\tfrac{\tilde A P_{\mathcal I'}}{\sqrt S L_{\rho}} \right)^{T}
	\left(\tfrac{\tilde A P_{\mathcal I'}}{\sqrt S L_{\rho}} \right) +
	\left(\tfrac{\sqrt{\rho}C P_{\mathcal I'}}{\sqrt S L_{\rho}} \right)^{T}
	\left(\tfrac{\sqrt{\rho}C P_{\mathcal I'}}{\sqrt S L_{\rho}} \right)
	\right) \right) u^{(n)} 
	 + \left(\tfrac{\tilde A P_{\mathcal I'}}{\sqrt S L_{\rho}} \right)^{T}  \tfrac{\tilde f}{{\sqrt S L_{\rho}}}.
	\end{equation}
	Since $C \tilde A^{T}\tilde f = 0,$
	the iteration \eqref{eq:ProjectionAfterFixation3} can be interpreted as Landweber iteration 
	for the block matrix consisting of the upper block $(\tilde A P_{\mathcal I'})/(\sqrt S L_{\rho})$ and the lower block 
	$(\sqrt{\rho}C P_{\mathcal I'})/(\sqrt S L_{\rho})$
	 and data $\tilde f/(\sqrt S L_{\rho})$ extended by $0.$ 
	 The Landweber iteration converges at a linear rate; cf., e.g., \cite{engl1996regularization}. Thus, the iteration \eqref{eq:ProjectionAfterFixation} convergences
	and, in turn, we get the convergence of Algorithm~\ref{alg:Algo1} at a linear rate to some limit $u^\ast$.   
	
	(3) We show that $u^\ast$ is a local minimizer. 
	Since $u^\ast$ is the limit of the iterates $u^{(n)} $, the jumps of $u^\ast$ also have minimal
	height $c,$ the number of jumps are equal to those of the $u^{(n)}$ for all $n \geq N,$ and the induced directional partitioning $\mathcal I^\ast$ equals the partitioning 
	$\mathcal I'$ of the $u^{(n)}$ for $n \geq N.$
	Since $u^\ast$ equals the limit of the above Landweber iteration,
    $u^\ast$ minimizes $F_\rho$ given by \eqref{eq:QuadraticRelaxOnAp} on $\mathcal A^{\mathcal P_{\mathcal{I}'}}.$
	Then by Lemma~\ref{lem:CharLocMinQuPenRelax}
	$u^\ast$ is a local minimizer 
	of the relaxed Potts energy $P_{\gamma,\rho}$
	which completes the proof.  
\end{proof}

After having shown the convergence of Algorithm~\ref{alg:Algo1}  to a local minimizer, we have now gathered all information to show Theorem~\ref{thm:EffectRelaxedAlgo4OriginalProblem}.
\begin{proof}[Proof of Theorem~\ref{thm:EffectRelaxedAlgo4OriginalProblem}]
	Assertion {\em i.}\ was stated and shown as Proposition~\ref{prop:aproxLocMin} in Section~\ref{sec:quadPenRelaxRelation}.	
	By Proposition~\ref{pro:RelaxedPottsConverges2LocMinimizer} Algorithm~\ref{alg:Algo1} produces a local minimizer. Then the assertion {\em ii.} is a consequence of Proposition~\ref{pro:theoretResOnProc1}.	
\end{proof}

\subsection{Estimating the distance between the objectives}\label{sec:EstimatingDistance}

The next lemma is a preparation for the proof of item \emph{(iii)} of Theorem~\ref{thm:RelaxedConvergesAndTolerance}. 

\begin{lemma}\label{lem:RelaxedConvergesAndTolerance}
	We consider  Algorithm~\ref{alg:Algo1} 
	for the quadratic penalty relaxation \eqref{eq:MinimizeMultVarRelax} of the multivariate Potts problem. For any output $u =(u_1,\ldots,u_S)$ of Algorithm~\ref{alg:Algo1} we have that
		\begin{equation}\label{eq:Tolerance4theUsLemma}
		\left(\sum\nolimits_{s,s'} c_{s,s'} \|u_s - u_{s'}\|^2_2\right)^{\tfrac{1}{2}}  
		\leq  2 \sigma_1^{-1/2} S^{-1/2} \|A\| \|f\|/ \rho.
		\end{equation}
		Here, $\sigma_1$ denotes the smallest non-zero eigenvalue of $C^TC$ with $C$ given by 
		\eqref{eq:CforCoupling}.
\end{lemma}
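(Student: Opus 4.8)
The plan is to obtain \eqref{eq:Tolerance4theUsLemma} by chaining together four facts already established in the excerpt: that the output of Algorithm~\ref{alg:Algo1} is a local minimizer of the relaxed functional $P_{\gamma,\rho}$ (Proposition~\ref{pro:RelaxedPottsConverges2LocMinimizer}); that such a local minimizer is a minimizer of $F_{\rho}$ on the subspace $\mathcal A^{\mathcal P}$ attached to its induced partitioning (Lemma~\ref{lem:CharLocMinQuPenRelax}); the quantitative estimate of $\|Cv\|$ for $v\in\mathcal A^{\mathcal P}$ in terms of the optimality gap and a Lagrange multiplier (Lemma~\ref{lem:estCu}); and the partition-independent bound on the Lagrange multiplier norm (Lemma~\ref{lem:EstLagrangeMultipliers}). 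The key observation that makes everything collapse is that, for the output $u$ itself, the optimality gap in Lemma~\ref{lem:estCu} is exactly zero, so the square-root term disappears and only the Lagrange-multiplier term survives.

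Concretely, first I would record that the output $u=(u_1,\ldots,u_S)$ of Algorithm~\ref{alg:Algo1} is, by Proposition~\ref{pro:RelaxedPottsConverges2LocMinimizer}, a local minimizer of $P_{\gamma,\rho}$. Let $\mathcal I$ be the directional partitioning it induces and $\mathcal P=\mathcal P_{\mathcal I}$ the induced partitioning; then $u\in\mathcal A^{\mathcal P}$ by construction, so $P_{\mathcal I}u=u$, and Lemma~\ref{lem:CharLocMinQuPenRelax} tells us that $u$ minimizes $F_{\rho}$ over $\mathcal A^{\mathcal P}$, i.e. $F_{\rho}(u)=\min_{x\in\mathcal A^{\mathcal P}}F_{\rho}(x)$. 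Next I would apply Lemma~\ref{lem:estCu} with $v=u$: since the numerator $F_{\rho}(u)-\min_{x\in\mathcal A^{\mathcal P}}F_{\rho}(x)$ vanishes, this yields $\|Cu\|=\|CP_{\mathcal I}u\|\leq \tfrac{1}{\rho}\|\mu^{\ast}\|$ for any Lagrange multiplier $\mu^{\ast}$ of \eqref{eq:ell2problemOnSegments} associated with $\mathcal P$. Choosing $\mu^{\ast}=\mu_{\mathcal P}$ to be the multiplier furnished by Lemma~\ref{lem:EstLagrangeMultipliers}, whose norm satisfies $\|\mu_{\mathcal P}\|\leq 2\sigma_1^{-1/2}S^{-1/2}\|A\|\|f\|$ uniformly over all partitionings, we get $\|Cu\|\leq 2\sigma_1^{-1/2}S^{-1/2}\|A\|\|f\|/\rho$. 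Finally, reading off $\|Cu\|^{2}=\sum_{s,s'}c_{s,s'}\|u_s-u_{s'}\|_2^{2}$ from the definition of $C$ in \eqref{eq:CforCoupling} (as in the conventions used for \eqref{eq:MinimizeMultVarRelax} and \eqref{eq:QuadraticRelaxOnAp}, where the relevant weights obey $c_{s,s'}^{2}=c_{s,s'}$) gives exactly the asserted inequality \eqref{eq:Tolerance4theUsLemma}.

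This proof is essentially assembly rather than invention, so there is no genuine obstacle; the only points requiring care are bookkeeping ones. One must check that $u$ really lies in $\mathcal A^{\mathcal P}$ for the partitioning it induces — which is immediate, since $u$ is constant on the intervals of $\mathcal I$ — so that the optimality gap in Lemma~\ref{lem:estCu} is $0$; and one must make sure that the matrix $C$ and the quantity $\sigma_1$ (smallest nonzero eigenvalue of $C^{T}C$) appearing in Lemma~\ref{lem:estCu}, in Lemma~\ref{lem:EstLagrangeMultipliers}, and in the definition of $F_{\rho}$ via \eqref{eq:QuadraticRelaxOnAp} are the same throughout, which they are. With those consistency checks in place the estimate follows in a few lines.
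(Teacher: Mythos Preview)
Your proposal is correct and follows essentially the same route as the paper: invoke Proposition~\ref{pro:RelaxedPottsConverges2LocMinimizer} to identify the output as a local minimizer, use Lemma~\ref{lem:CharLocMinQuPenRelax} to see it minimizes $F_\rho$ on $\mathcal A^{\mathcal P}$, apply Lemma~\ref{lem:estCu} so that the square-root term vanishes, and conclude via the Lagrange-multiplier bound of Lemma~\ref{lem:EstLagrangeMultipliers}. The paper orders the steps slightly differently (applying Lemma~\ref{lem:estCu} first and then observing the second summand is zero), but the argument is the same.
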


\begin{proof}
Since $u =(u_1,\ldots,u_S)$ is the output of Algorithm~\ref{alg:Algo1} it is a local minimizer 
of the relaxed Potts problem \eqref{eq:MinimizeMultVarRelax}. In particular, there is a directional partitioning $\mathcal I$ with respect to which $u$ is piecewise constant.
We denote the induced partitioning by $\mathcal P =\mathcal P_{\mathcal I}.$
By Lemma~\ref{lem:estCu} we have 
\begin{align}  
\left(\sum\nolimits_{s,s'} c_{s,s'} \|u_s - u_{s'}\|^2_2\right)^{\tfrac{1}{2}} = \| C  u \| =  \| C P_{\mathcal I} u \| \leq \tfrac{1}{\rho}\|\mu^\ast\|+ \sqrt{\tfrac{F_{\rho}(u)- \min_{x \in \mathcal A^{\mathcal P}} F_{\rho}(x)}{\rho}},
\end{align}
where $\mu^\ast$ is an arbitrary Lagrange multiplier of \eqref{eq:ell2problemOnSegments}.
By Lemma~\ref{lem:EstLagrangeMultipliers} we have that 
$\| \mu^\ast \|   \leq   2 \sigma_1^{-1/2} S^{-1/2} \|A\| \|f\|,$
for any partitioning of the discrete domain $\Omega,$
and in particular for the partitioning $\mathcal P =\mathcal P_{\mathcal I}.$
This shows that
\begin{equation*}
\|Cu\| \leq  2 \sigma_1^{-1/2} S^{-1/2} \|A\| \|f\|/ \rho + \sqrt{\tfrac{F_{\rho}(u)- \min_{x \in \mathcal A^{\mathcal P}} F_{\rho}(x)}{\rho}}.
\end{equation*}
Since $u$ is a local minimizer 
of the relaxed Potts problem \eqref{eq:MinimizeMultVarRelax}, it is 
a minimizer of $F_{\rho}$ on $\mathcal A^{\mathcal P}$
by Lemma~\ref{lem:CharLocMinQuPenRelax}, and the second summand on the right hand side equals zero.
This shows \eqref{eq:Tolerance4theUsLemma} and completes the proof.
\end{proof}

We have now gathered all information necessary to show Theorem~\ref{thm:RelaxedConvergesAndTolerance}.

\begin{proof}[Proof of Theorem~\ref{thm:RelaxedConvergesAndTolerance}]
	Part \emph{(i)} is shown by Proposition~\ref{pro:RelaxedPottsConverges2LocMinimizer}.	
	Concerning \emph{(ii)}
	we first show that any global minimizer of the relaxed Potts energy
	 $P_{\gamma, \rho}$ given by 
	 \eqref{eq:MinimizeMultVarRelax}
	 appears as a stationary point of Algorithm~\ref{alg:Algo1}.
	 To this end we start Algorithm~\ref{alg:Algo1} with a global minimizer $\bar u^\ast=(u_1^\ast,\ldots,u_S^\ast)$ as initialization.
	 Then, we have for all $\bar v=(v_1,\ldots,v_S)$ with $\bar v \neq \bar u^\ast,$
	 \begin{align} \label{eq:Est4Later}
	 P_{\gamma, \rho}^{\rm surr}\left(v_1,\ldots,v_S,u_1^\ast,\ldots,u_S^\ast \right)
	 &= P_{\gamma, \rho}(\bar v) - \|B\bar v-B\bar u^\ast\|^2 + \|\bar v-\bar u^\ast\|^2\\
	 &> P_{\gamma, \rho}(\bar v) \geq P_{\gamma, \rho}(\bar u^\ast) = 
	 P_{\gamma, \rho}^{\rm surr} (\bar u^\ast,\bar u^\ast). \notag
	 \end{align} 
	 Here, $B$ is given by \eqref{eq:DefOfBandg}.
	 The estimate \eqref{eq:Est4Later} means that $\bar u^\ast$ is the minimizer of the surrogate functional w.r.t.\ the first component, i.e., it is the minimizer of the mapping 
	 $\bar v \mapsto$ $P_{\gamma, \rho}^{\rm surr}(\bar v,\bar u^\ast).$
	 Thus, the iterate $\bar u^{(1)}=(u^{(1)}_1,\ldots,u^{(1)}_S)$ of Algorithm~\ref{alg:Algo1} equals
	 $\bar u^\ast$ when the iteration is started with $\bar u^\ast.$ Thus, the global minimizer $\bar u^\ast$ is a stationary point of Algorithm~\ref{alg:Algo1}.
	 It remains to show that each stationary point of Algorithm~\ref{alg:Algo1} is a local minimizer 
	 of the relaxed Potts energy
	 $P_{\gamma, \rho}$. 
	 This has essentially already been done in the proof 
	 of Proposition~\ref{pro:RelaxedPottsConverges2LocMinimizer}: 
	 start the iteration given by \eqref{eq:surIt4MultPottsRelaxed} with a stationary point $u';$	  
	 its limit equals $u'$ and is thus a local minimizer by Proposition~\ref{pro:RelaxedPottsConverges2LocMinimizer}.

	Concerning \emph{(iii)}, we use Lemma~\ref{lem:RelaxedConvergesAndTolerance} to estimate
	\begin{align}
	\left(\sum\nolimits_{s,s'} c_{s,s'} \|u_s - u_{s'}\|^2_2\right)^{\tfrac{1}{2}} \leq 2 \sigma_1^{-1/2} S^{-1/2} \|A\| \|f\|/ \rho  < \varepsilon.       	 
	\end{align}
	The second inequality follows by our choice of $\rho$ in \eqref{eq:chooseRho} as 
	$\rho > 2 \varepsilon^{-1}  \ \sigma_1^{-1/2} S^{-1/2} \|A\| \|f\|.$
	This shows the validity of \emph{(iii)} and completes the proof.
\end{proof}

\subsection{Convergence Analysis of Algorithm~\ref{alg:Algo2}}\label{sec:ConvAlgo2}

We start out showing that Algorithm~\ref{alg:Algo2} is well-defined in the sense that the inner iteration governed by \eqref{eq:TerminationInnerLoopAlg2} terminates. This result was formulated as  
Theorem~\ref{thm:Algo2isWellDefined}.
\begin{proof}[Proof of Theorem~\ref{thm:Algo2isWellDefined}]
	We have to show that, for any $k \in \mathbb N,$ there is $n \in \mathbb N$ such that 
		\begin{align}\label{eq:TerminationInnerLoopAlg2NegProof}
		\left\|u^{(k,n)}_s - u^{(k,n)}_{s'} \right\| \leq \frac{t}{\rho_k \sqrt{c_{s,s'}}}, \quad \text{ and } \quad
		\left\| u^{(k,n)}_s - u^{(k,n-1)}_s \right\| \leq \frac{\delta_k}{L_{\rho}}.						    
		\end{align}		
	To see the right-hand side of \eqref{eq:TerminationInnerLoopAlg2NegProof},
	we notice that, by Proposition~\ref{pro:RelaxedPottsConverges2LocMinimizer},	
	the iteration \eqref{eq:backwardStepAlg1} converges to a local minimizer 
	of the quadratic penalty relaxation 
	$P_{\gamma, \rho}(u_1,\ldots,u_S)$ of the Potts energy.
	The inner loop of Algorithm~\ref{alg:Algo2} precisely computes the iteration \eqref{eq:backwardStepAlg1} (for the penalty parameter $\rho$ which increases with $k.$) 			
	Thus, the distance between consecutive iterates $u^{(k,n)}_s,u^{(k,n-1)}_s$ converges to zero as $n$ increases which implies the validity of the right-hand side of \eqref{eq:TerminationInnerLoopAlg2NegProof} for sufficiently large $n,$ and all $k \in \mathbb N.$
	
	To see the left-hand inequality in \eqref{eq:TerminationInnerLoopAlg2NegProof}, 
	we notice that, by the considerations above, the inner loop of Algorithm~\ref{alg:Algo2}
    would converge to a minimizer $\bar u^{(k),\ast} = (u_1^{(k),\ast},\ldots,u_S^{(k),\ast})$ 
	if it was not terminated by \eqref{eq:TerminationInnerLoopAlg2NegProof} for all $k \in \mathbb N.$
	Since $\bar u^{(k),\ast}$ is a local minimizer 
	of the relaxed Potts problem \eqref{eq:MinimizeMultVarRelax} for the parameter $\rho_k$, it is 
	a minimizer of $F_{\rho_k}$ on $\mathcal A^{\mathcal P}$ (where $\mathcal P$ denotes the partitioning induced by $\bar u^{(k),\ast}$)
	by Lemma~\ref{lem:CharLocMinQuPenRelax}.	
	Hence, for any $k\in \mathbb N$ and any $\xi>0$ there is 
	$\bar u^{(k,n)} =(u^{(k,n)}_1,\ldots,u^{(k,n)}_S)$
	such that 
	$F_{\rho_k}(\bar u^{(k,n)})- F_{\rho_k}(\bar u^{(k),\ast})  <  \xi.$
	We let 
	$\tau = (t - 2\sigma_1^{-1/2} S^{-1/2} \|A\| \ \|f\|) /\rho_k,$
	and choose $\xi = \rho_k \tau^2.$
	Using this together with  Lemma~\ref{lem:estCu} we estimate 
	\begin{align} \label{eq:estbyTbyRho} 
	\sqrt{c_{s,s'}} \|u_s^{(k,n)} - u_{s'}^{(k,n)}\|_2 = \| C  \bar u^{(k,n)} \| 
	&\leq \tfrac{1}{\rho_k}\|\mu^\ast\|+ \sqrt{\tfrac{F_{\rho_k}(\bar u^{(k,n)})- F_{\rho_k}(\bar u^{(k),\ast})}{\rho_k}} \notag\\
	&\leq \tfrac{1}{\rho_k}\|\mu^\ast\| + \sqrt{\tfrac{\xi}{\rho_k}}
	\leq \tfrac{1}{\rho_k}\|\mu^\ast\| + \tau \leq \tfrac{t}{\rho_k} 
	\end{align}
	where $\mu^\ast$ is an arbitrary Lagrange multiplier of \eqref{eq:ell2problemOnSegments}.
	Here, the last inequality is true since by Lemma~\ref{lem:EstLagrangeMultipliers} we have that 
	$\| \mu ^\ast\|  \leq   2 \sigma_1^{-1/2} S^{-1/2} \|A\| \|f\|$
	which implies that $\tau \leq (t-\mu^\ast)/\rho_k.$
	The estimate \eqref{eq:estbyTbyRho} shows the left-hand inequality in \eqref{eq:TerminationInnerLoopAlg2NegProof}
	and completes the proof.	
\end{proof}

We have now gathered all information to prove Theorem~\ref{thm:ConvAlgo2} which
deals with the convergence properties of Algorithm~\ref{alg:Algo2}. 
\begin{proof}[Proof of Theorem~\ref{thm:ConvAlgo2}]
		We start out to show that any accumulation point of the sequence $u^{(k)}$ 
		produced by Algorithm~\ref{alg:Algo2}
		is a local minimizer of the Potts problem \eqref{eq:MinimizeMultVar}. 
		Let $u^\ast$ be such an accumulation point and let $\mathcal I^\ast$ be the directional 
		partitioning induced by $u^\ast.$
		We may extract a subsequence $u^{(k_l)}$ of the sequence $u^{(k)}$ such that
		$u^{(k_l)}$ converges to $u^\ast$ as $l \to \infty,$
		and such that the directional partitionings $\mathcal I^{k_l}$ induced by the 
		$u^{(k_l)}$ all equal the directional partitioning $\mathcal I^\ast,$ 
		i.e., $\mathcal I^{k_l}=\mathcal I^\ast$ for all $l \in \mathbb N.$
		We let
		\begin{equation}\label{eq:Defmukl}
			\mu^{k_l} =  - 2 \rho_{k_l} \ C u^{k_l}
		\end{equation} 
        with the matrix $C$ given by \eqref{eq:CforCoupling},		
		and estimate
		\begin{align}\label{eq:ToVerLagCond}\notag
			\|\tfrac{2}{S}\tilde A^T \tilde A u^{k_l}  - \tfrac{2}{S}\tilde A^T \tilde f-  C^T \mu^{k_l} \|
			&= \|\tfrac{2}{S}\tilde A^T \tilde A u^{k_l} - \tfrac{2}{S}\tilde A^T \tilde f+ 2 \rho_{k_l} C^T  \ C u^{k_l}\| \\
			&= \| \nabla F_{\rho_{k_l}}(u^{k_l}) \|
			\leq \tfrac{\delta_{k_l}}{L_{\rho_{k_l}}} \leq \delta_{k_l}. 
		\end{align}
		We recall that $\tilde A$ was the block diagonal matrix having the matrix $A$ as entry in each diagonal component and that $F_{\rho_{k_l}}$ was given by \eqref{eq:QuadraticRelaxOnAp}. 
		We notice that the second before last inequality follows by the right hand side of \eqref{eq:TerminationInnerLoopAlg2NegProof}. We further estimate
		\begin{equation*}
			\|\mu^{k_l}\| = \rho_{k_l} \|C u^{k_l}\| < \rho_{k_l} \tfrac{S t}{\rho_{k_l}} = S t
		\end{equation*}
		which is a consequence of the left hand side of \eqref{eq:TerminationInnerLoopAlg2NegProof}.
		Hence, the sequence $\mu^{k_l}$ is bounded and thus has a cluster point, say $\mu^\ast,$ by the 
		Bolzano-Weierstra\ss \ Theorem. By passing to a further subsequence (where we suppress the new indexation for better readability and still use the symbol $l$ for the index)
		we get that  
		\begin{equation}\label{eq:LamusConverge}
		\mu^{k_l} \to \mu^\ast \quad \text{ as }\quad  l \to \infty.	
		\end{equation}
		Now, on this subsequence, we have that $u^{(k_l)} \to u^{\ast}$ and that 
		$\mu^{k_l} \to \mu^\ast.$ 
		Hence taking limits on both sides of \eqref{eq:ToVerLagCond} yields
		\begin{align}\label{eq:ToVerLagCond2}
		\tfrac{2}{S}\tilde A^T \tilde A u^{\ast}  - \tfrac{2}{S}\tilde A^T \tilde f
		-  C^T \mu^{\ast} = 0, 
		\end{align}
		since $\delta_{k_l} \to 0$ as $l \to \infty.$
		Further,
		\begin{equation}
			\|Cu^\ast\| \leq \lim_{l \to \infty}\tfrac{\|\mu^{k_l}\|}{\rho_{k_l}} 
			\leq \| \mu^\ast \|   \lim_{l \to \infty}\tfrac{1}{\rho_{k_l}} =0.
		\end{equation}
		This implies that the components of $u^\ast$ are equal, i.e., $u_s^{\ast} = u_{s'}^{\ast}$ for all $s,s'.$ In particular $u^\ast$ is a feasible point for the Potts problem \eqref{eq:MinimizeMultVar}. Or, letting $\mathcal P^\ast$ be the partitioning induced by $u^\ast,$ we have that $u^\ast \in \mathcal B^{\mathcal P}.$
 		Then, \eqref{eq:ToVerLagCond2} tells us that $u^\ast$ minimizes \eqref{eq:ell2problemOnSegments}
 		which by Lemma~\ref{lem:CharLokMinPotts} tells us that $u^\ast$ is a local minimizer of 
 		\eqref{eq:MinimizeMultVar}, or synonymously, 
 		that any component of $u^\ast$ (which are all equal)
 		minimizes  the Potts problem \eqref{eq:PottsDiscrete}.
 		This shows the first assertion of Theorem~\ref{thm:ConvAlgo2}.

		We continue by showing the second assertion of Theorem~\ref{thm:ConvAlgo2}, i.e.,
		if  $A$ is lower bounded, then
		the sequence $u^{(k)}$ produced by Algorithm~\ref{alg:Algo2} 
		has a cluster point. Then, by the above considerations, each cluster point is a local minimizer which shows the assertion. To this end, we show that, 
		if $A$ is lower bounded, the sequence $u^{(k)}$ produced by Algorithm~\ref{alg:Algo2} is bounded which by the Heine-Borel property of finite dimensional Euclidean space implies that 
		it has a cluster point. 
		So we assume that $A$ is lower bounded,
		and consider the sequence $u^{(k)}=(u_1^{(k)},\ldots,u_S^{(k)})$ produced by Algorithm~\ref{alg:Algo2}.
		As in the proof of Theorem~\ref{thm:Algo2isWellDefined} we see that, for any $k \in \mathbb N$,  
		there is a local minimizer $ u^{(k),\ast} = (u_1^{(k),\ast},\ldots,u_S^{(k),\ast})$ of 
		\eqref{eq:MinimizeMultVarRelax} such that
		\begin{equation}\label{eq:IterateIsNearMinimizer}
			\|u^{(k)}- u^{(k),\ast}\| \leq C_2 \delta_k,
		\end{equation}	
		where $C_2$ is a constant independent of $k.$	
		By Lemma~\ref{lem:CharLocMinQuPenRelax}, 
		$ u^{(k),\ast}$ is a minimizer of $F_{\rho}$ on $\mathcal A^{\mathcal P}$ (where $\mathcal P$ denotes the partitioning induced by $ u^{(k),\ast}$.)
		Hence,
		\begin{equation*}
			\tfrac{1}{S} \sum\nolimits_{s=1}^S\| A u_s^{(k),\ast} -  f \|^2 \leq  F_{\rho}( u^{(k),\ast}) \leq 
			\|f \|^2
		\end{equation*}
		by choosing the function having the zero function as entry in each component as a candidate.
		This implies 
		\begin{equation}\label{eq:BoundMinimizer}
		\tfrac{1}{S} \sum\nolimits_{s=1}^S\| A u_s^{(k),\ast} \|^2 \leq 
		4 \|f \|^2.
		\end{equation} 
		Then, since $A$ is lower bounded, there is a constant $c>0$ such that
		\begin{equation}
			\|u^{(k),\ast}\|^2 
			= \tfrac{1}{S} \sum\nolimits_{s=1}^S\| u_s^{(k),\ast}\|^2
			\leq  \tfrac{1}{S} \sum\nolimits_{s=1}^S c^2\| A u_s^{(k),\ast}\|^2
			\leq 4 c^2 \|f \|^2
		\end{equation}
		where we used \eqref{eq:BoundMinimizer} for the last inequality. 
		Combining this estimate with \eqref {eq:IterateIsNearMinimizer} yields 
		\begin{equation}\label{eq:IterateIsBounded}
		\|u^{(k)}\| \leq \|u^{(k)}- u^{(k),\ast}\| + \|u^{(k),\ast}\| \leq C_2 \delta_k + 2 c \|f\|.		
		\end{equation}			
		Since we have chosen $\delta_k$ as a sequence converging to zero, 
		\eqref{eq:IterateIsBounded} shows that the sequence $u^{(k)}$ is bounded
		which implies that it has cluster points. This completes the proof. 
\end{proof}

\section{Numerical Results}
\label{sec:NumericalExperiments}
In this section, we show the applicability of our methods to 
different imaging tasks.
We start out by providing the necessary implementation details. 
Then we compare the results of 
the quadratic relaxation \eqref{eq:MinimizeMultVarRelax}
(Algorithm \ref{alg:Algo1})
to the ones of the Potts problem 
\eqref{eq:PottsDiscrete}
(Algorithm \ref{alg:Algo2}).
Next, we apply Algorithm \ref{alg:Algo2} to blurred image data and 
to image reconstruction from incomplete Radon data.
Finally, we consider the 
image partitioning problem
according to the classical Potts model.
\paragraph{Implementation details.}
We implemented Algorithm \ref{alg:Algo1} and Algorithm \ref{alg:Algo2} for the coupling
schemes in \eqref{eq:ChoiceRhos} and the set of compass and diagonal directions
$ (1,0),(0,1),(1,1),(1,-1)$ with weights
$\omega_{1,2} = \sqrt{2}-1$ and $\omega_{3,4} = 1-\frac{\sqrt{2}}{2}$.

Concerning Algorithm \ref{alg:Algo1}
we observed both visually and quantitatively appealing
results if
we use relaxed step-sizes
$L_\rho^\lambda= L_\rho [\lambda + (1-(n+1)^{-1/2})(1-\lambda) ]$ for
an empirically chosen 
parameter $0<\lambda\leq 1$, where
$L_\rho$ denotes the estimate in Lemma \ref{lem:BbyLcontractive}. 
The iterations were stopped when the nearness condition \eqref{eq:maxNormTol}
was fulfilled and the iterates did not change anymore, i.e., when
$\|u_1^{(n)} - u_1^{(n-1)}\|/(\|u_1^{(n)}\| + \|u_1^{(n-1)}\|)$ 
and 
$\|u_2^{(n)} - u_2^{(n-1)}\|/(\|u_2^{(n)}\| + \|u_2^{(n-1)}\|)$
were smaller than  $10^{-6}$.
The result of Algorithm \ref{alg:Algo1} was transformed into a feasible solution
of 
\eqref{eq:MinimizeMultVar}
by applying the projection procedure
described in Section~\ref{sec:ProposedSchemes}
(Procedure \ref{proc:ProjProc}).
As initialization we applied 1000 Landweber iterations with step-size $1/\|A\|^2$
to the least squares problem induced by the linear operator $A$ and data $f$.

Concerning Algorithm \ref{alg:Algo2}, we set $\rho^{(0)} = 10^{-3}$
in all experiments
which we incremented 
by the factor $\tau = 1.05$ in each outer iteration.
The $\delta$-sequence was chosen as
$\delta^{(k)} = \frac{1}{\eta\rho^{(k)}}$ 
for $\eta = 0.95$ when coupling all variables
and $\eta = 0.98$ when coupling consecutive variables. 
Similarly to Algorithm \ref{alg:Algo1},
step A of Algorithm \ref{alg:Algo2} was performed using
the relaxed step sizes  
$L_\rho^\lambda= L_\rho [\lambda + (1-(n+1)^{-1/2})(1-\lambda) ]$
for an application-dependent
parameter $0<\lambda\leq 1$ and for
the estimate $L_\rho$  in Lemma \ref{lem:BbyLcontractive}. 
We stopped the iterations when the relative discrepancy of the first two
splitting variables 
$\| u^{(k)}_1 - u^{(k)}_2 \| / (\| u^{(k)}_1 \| + \| u^{(k)}_2 \|) $
was smaller than  $10^{-6}$. 
We initialized Algorithm \ref{alg:Algo2} with $A^Tf$.

\paragraph{Comparison of Algorithm 1 and Algorithm 2.}
We compare Algorithm \ref{alg:Algo1} and Algorithm \ref{alg:Algo2} for blurred image data,
that is, the linear operator $A$ 
in \eqref{eq:pottsGeneralA}
amounts to the convolution by a kernel $K$. In the present experiment, we chose a Gaussian kernel with standard deviation $\sigma=3
$ 
and of size $6\sigma + 1$.
Here, we coupled all splitting variables and chose the step-size parameter
$\lambda=0.4$ for Algorithm \ref{alg:Algo1}
and $\lambda = 0.35$ for Algorithm \ref{alg:Algo2}, respectively.  
In Figure~\ref{fig:Alg1vsAlg2} we applied both methods to a blurred natural image.
While both algorithms yield reasonable partitionings, 
Algorithm~\ref{alg:Algo2} provides smoother edges than Algorithm~\ref{alg:Algo1}.
Further, Algorithm~\ref{alg:Algo1} 
	produces some smaller segments (at the treetops).

\begin{figure}[!t]
	\def\CompSize{0.24}
	\captionsetup{justification=centering}		
	\begin{subfigure}{\CompSize\textwidth}
		\includegraphics[width=\textwidth]{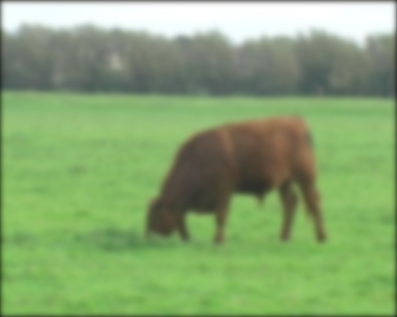}
		\caption{Blurred data\\~}
	\end{subfigure}\hfill
	\begin{subfigure}{\CompSize\textwidth}
		\includegraphics[width=\textwidth]{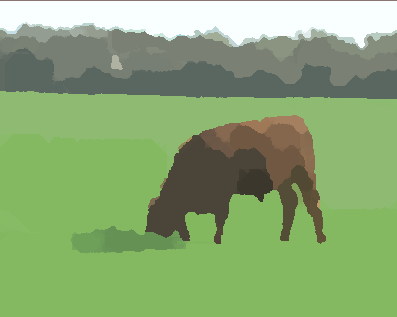}
		\caption{Result of Algorithm~1, \\$\gamma=\protect\input{Alg1vsAlg2/gamma.tex}$,
			$\varepsilon=\protect\input{Alg1vsAlg2/eps.tex}$
		}
	\end{subfigure}\hfill
	\begin{subfigure}{\CompSize\textwidth}
		\includegraphics[width=\textwidth]{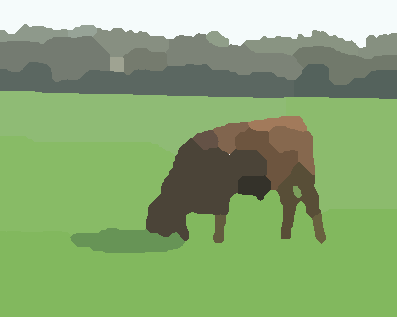}
		\caption{Result of Algorithm~2,\\
			$\gamma=\protect\input{Alg1vsAlg2/gamma.tex}$
		}
	\end{subfigure}\hfill	
	\begin{subfigure}{\CompSize\textwidth}
		\centering
		\captionsetup{justification=centering}		
		\includegraphics[width=\textwidth]{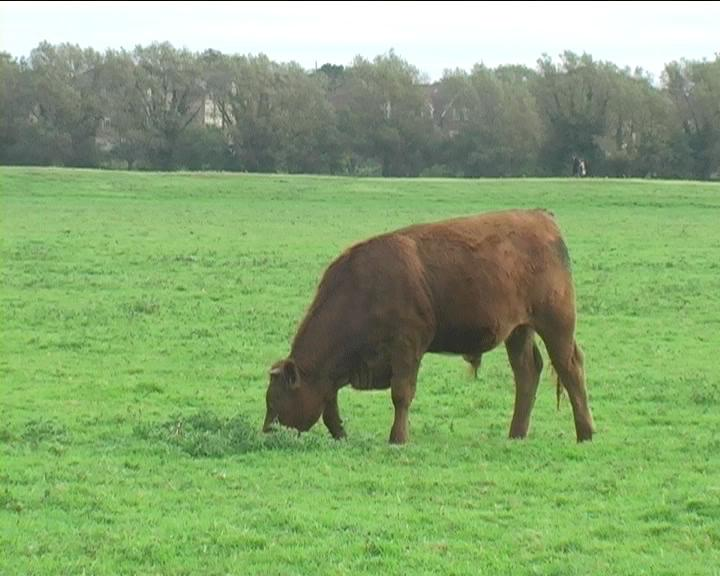}
		\caption{Original \cite{juan2006active}\\~}
	\end{subfigure}\hfill
	\captionsetup{justification=justified}		
	\caption{\label{fig:Alg1vsAlg2}Results of Algorithm~\ref{alg:Algo1} 
		and Algorithm~\ref{alg:Algo2} for partitioning an image blurred
		by a Gaussian kernel of standard deviation
		\protect\input{Alg1vsAlg2/sigma_blurr.tex}\unskip.
		Both methods provide reasonable partitionings. 
		Algorithm~\ref{alg:Algo2} provides smoother edges than
		Algorithm~\ref{alg:Algo1}
		(e.g., the boundary between the meadow and the forest, the back of the cow).
		In addition, Algorithm~\ref{alg:Algo1} produces some smaller segments around the treetops.			
	}
\end{figure}

\paragraph{Application to blurred data.}
For the following experiments, we focus on Algorithm \ref{alg:Algo2}.
In case of motion blur we set the step-size parameter to $\lambda = 0.25$, while for
Gaussian blur we set $\lambda=0.35$ as in Figure \ref{fig:Alg1vsAlg2}.
We compare our method with the
Ambrosio-Tortorelli 
approximation \cite{ambrosio1990approximation} of
the 
classical Mumford-Shah model (which itself tends to the piecewise constant Mumford-Shah model for increasing variation penalty) given by
\begin{equation}\label{eq:AmbrosioTortorelli}
\begin{split}
A_\varepsilon (u,v) = \gamma \int \varepsilon \vert \nabla v \vert^2
+\frac{(v-1)^2}{4\varepsilon} \mathrm{d}x
+\alpha \int v^2 \| \nabla u \|^2 \mathrm{d}x + 
\frac{1}{2} \int (K \ast u - f) \mathrm{d}x.
\end{split}
\end{equation}
The variable $v$ serves as an edge indicator and $\varepsilon>0$ 
is an edge smoothing parameter that is chosen empirically.
The parameter $\gamma > 0$ controls the weight of the edge length
penalty and 
the parameter $\alpha > 0$ penalizes the variation. 
In this respect, a higher value of $\alpha$	
promotes solutions which are closer to being piecewise constant.
In the limit  $\alpha\to \infty$ minimizers of
\eqref{eq:AmbrosioTortorelli} are piecewise constant.
Our implementation follows the scheme presented in \cite{bar2006semi}.
The functional $A_\varepsilon$ is alternately minimized w.r.t. $u$ and $v$.
To this end, we  iteratively solve the Euler-Lagrange equations 
\begin{equation}\label{eq:EulerLagrangeAT}
\begin{split}
2\alpha v \| \nabla u \|_2^2 + \gamma \frac{v-1}{2\varepsilon} - 2\varepsilon\gamma \nabla^2 v &= 0, \\
(K\ast u - f) \ast \widetilde{K} - 2\alpha \mathrm{div}(v^2 \nabla u) &= 0,
\end{split}
\end{equation}
where $\widetilde{K}(x) = K(-x)$. 
The first line is solved w.r.t. $v$ using a MINRES solver and the second line
is solved using the method of conjugate gradients \cite{bar2006semi}.
The iterations were stopped when 
the relative change of both variables was small, i.e.,
if both
$\| u^{k+1} - u^k \| /(\| u^k\| + 10^{-6}) <10^{-3}$
and
$\| v^{k+1} - v^k \| /(\| v^k\| + 10^{-6}) <10^{-3}$.

Figure \ref{fig:motionBlurr} shows the restoration of
a traffic sign
from simulated horizontal motion blur.
For the Ambrosio-Tortorelli approximation we set
$\alpha = 10^5$
to promote a piecewise constant solution.
We observe that both 
the Ambrosio-Tortorelli approximation
and the proposed method restore the data
to a human readable form.
However, the Ambrosio-Tortorelli result
shows clutter and blur artifacts.
Our method provides sharp edges and it produces less artifacts.

In Figure~\ref{fig:gaussianBlurr} we partition a 
natural image blurred by a Gaussian kernel
and corrupted by Gaussian noise.
We observed that the Ambrosio-Tortorelli result was
heavily corrupted by artifacts for $\alpha=10^5$. This might be attributed
to the underlying linear systems in scheme \eqref{eq:EulerLagrangeAT} which
become severely ill-conditioned
for large choices of the variation penalty $\alpha$.
Therefore, we chose the moderate variation penalty
$\alpha=5
$ which 
does only provide an approximately piecewise constant (rather piecewise smooth) result. 
The result does not fully separate the background from the fish in terms of edges.
On the other hand, the result of the proposed method
sharply differentiates between background and fish.
Further, it highlights various segments of the fish.

\begin{figure}[t]
	\def\FigDist{1.3em}
	\def\BlurrSize{0.24}
	\begin{subfigure}{\textwidth}
		\begin{subfigure}{\BlurrSize\textwidth}
			\includegraphics[width=\textwidth]{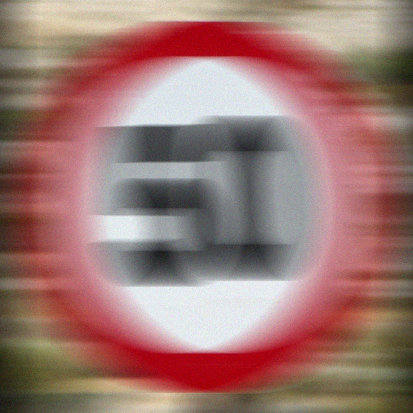}
			\caption{Blurred and noisy data\\~}
		\end{subfigure}\hfill
		\begin{subfigure}{\BlurrSize\textwidth}
			\includegraphics[width=\textwidth]{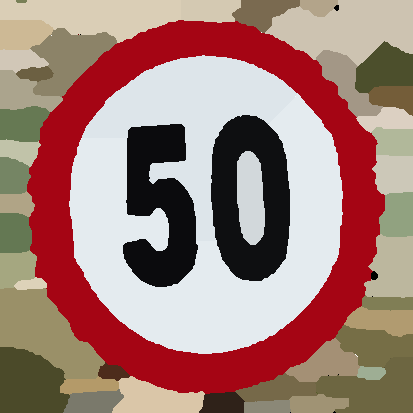}
			\caption{Proposed, $\gamma=\protect\input{Blurr_Motion/gamma.tex}$\\~
			}
		\end{subfigure}\hfill		
		\begin{subfigure}{\BlurrSize\textwidth}
			\includegraphics[width=\textwidth]{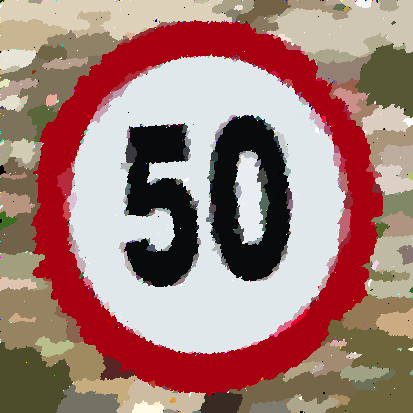}
			\caption{Ambrosio-Tortorelli,\\
				$\alpha=\protect\input{Blurr_Motion/alpha_AT.tex}$,				
				$\gamma=\protect\input{Blurr_Motion/gamma_AT.tex}$,
				$\varepsilon=\protect\input{Blurr_Motion/eps_AT.tex}$
			}
		\end{subfigure}\hfill	
		\begin{subfigure}{\BlurrSize\textwidth}
			\centering
			\captionsetup{justification=centering}		
			\includegraphics[width=\textwidth]{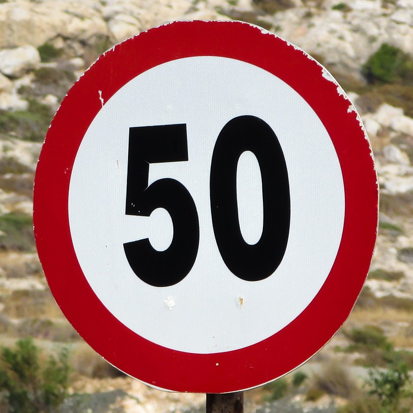}
			\caption{Original\\~}
		\end{subfigure}\hfill		
	\end{subfigure}\hfill			
	\caption{\label{fig:motionBlurr}
		Restoration
		from simulated horizontal
		motion blur of 
		\protect\input{Blurr_Motion/mask_size.tex}pixel length
		and Gaussian noise with 
		$\sigma=\protect\input{Blurr_Motion/sigma_noise.tex}$\unskip.
		The result of the Ambrosio-Tortorelli scheme exhibits
		noisy and blurred artifacts 
		and
		bumpy edges (e.g., the boundaries of the digits).
		The contours of the proposed result are concise
		and considerably less clutter is present.		
	}	\vspace{\FigDist}
	\begin{subfigure}{\textwidth}
		\captionsetup{justification=centering}		
		\begin{subfigure}{\BlurrSize\textwidth}
			\includegraphics[width=\textwidth]{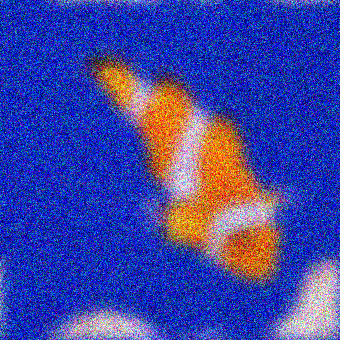}
			\caption{Blurred and noisy data\\~}
		\end{subfigure}\hfill	
		\begin{subfigure}{\BlurrSize\textwidth}
			\includegraphics[width=\textwidth]{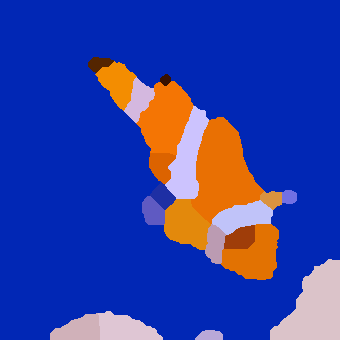}
			\caption{Proposed, $\gamma=\protect\input{Blurr_Gaussian/gamma.tex}$\\~
			}
		\end{subfigure}\hfill
		\begin{subfigure}{\BlurrSize\textwidth}
			\includegraphics[width=\textwidth]{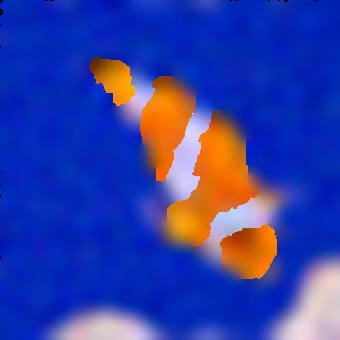}
			\caption{Ambrosio-Tortorelli, $\alpha=\protect\input{Blurr_Gaussian/alpha_AT.tex}$,
				$\gamma=\protect\input{Blurr_Gaussian/gamma_AT.tex}$,
				$\varepsilon=\protect\input{Blurr_Gaussian/eps_AT.tex}$
			}
		\end{subfigure}\hfill		
		\begin{subfigure}{\BlurrSize\textwidth}
			\includegraphics[width=\textwidth]{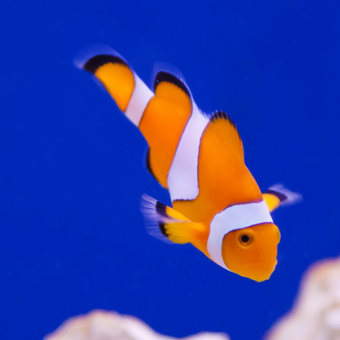}
			\caption{Original\\~}
		\end{subfigure}\hfill	
	\end{subfigure}	
	\caption{\label{fig:gaussianBlurr}
		Partitioning of an image blurred by a Gaussian kernel of standard deviation 
		\protect\input{Blurr_Gaussian/sigma_blurr.tex}\unskip
		~and corrupted by Gaussian noise
		with $\sigma=\protect\input{Blurr_Gaussian/sigma_noise.tex}$\unskip.
		The result of the Ambrosio-Tortorelli approximation does not
		yield a convincing partitioning of the scene, in particular many parts of the fish are merged with the background.
		The proposed approach provides a partitioning which reflects many parts of the fish.
	} 		
\end{figure}

\paragraph{Reconstruction from Radon data.}
\begin{figure}[ht]
	\def\TomoSize{0.24}
	\begin{subfigure}{\textwidth}	
		\begin{subfigure}{\textwidth}
			\centering
			\captionsetup{justification=centering}
			\begin{subfigure}{\TomoSize\textwidth}
				\includegraphics[width=\textwidth]{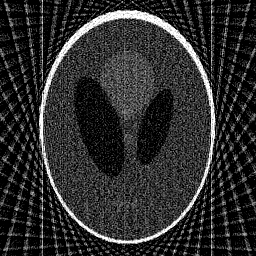}
				\caption{FBP (Ram-Lak Filter)\\MSSIM: \protect\input{Tomo/mssim_FBP.tex}
				}		
			\end{subfigure}\hfill
			\begin{subfigure}{\TomoSize\textwidth}
				\includegraphics[width=\textwidth]{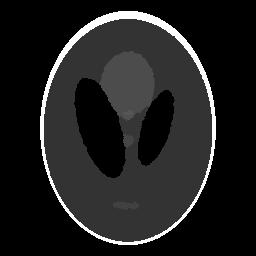}
				\caption{Proposed, $\gamma=\protect\input{Tomo/gamma.tex}$\\~
					MSSIM: \protect\input{Tomo/mssim_ours.tex}
				}
			\end{subfigure}\hfill
			\begin{subfigure}{\TomoSize\textwidth}
				\includegraphics[width=\textwidth]{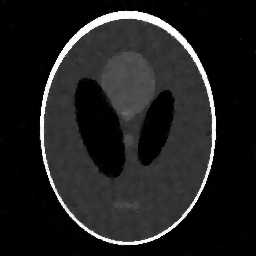}
				\caption{TV, $\mu=\protect\input{Tomo/gamma_TV.tex}$\\~
					MSSIM: \protect\input{Tomo/mssim_TV.tex}
				}
			\end{subfigure}\hfill
			\begin{subfigure}{\TomoSize\textwidth}
				\includegraphics[width=\textwidth]{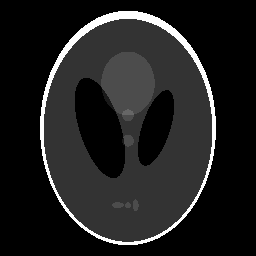}
				\caption{Original\\~}
			\end{subfigure}\hfill					
		\end{subfigure}	
	\end{subfigure}	
	\caption{\label{fig:radon}
		Reconstruction of the Shepp–Logan phantom from 
		undersampled Radon data
		(\protect\input{Tomo/nAngles.tex}\unskip~projection angles)
		corrupted by Gaussian noise with $\sigma=0.7$.	
		The proposed method provides a genuine piecewise constant reconstruction
		and the SSIM is improved by the factor 11.58
		for filtered backprojection
		and by 1.05 for total variation, respectively.			
	}
\end{figure}
We here consider reconstruction from Radon data which appears for instance
in computed tomography. We recall that the Radon transform reads
\begin{equation}
\begin{split}
Ru(\theta,s) = \int_{-\infty}^\infty u(s\theta + t\theta^\perp) \mathrm{d}t,
\end{split}
\end{equation}
where $s\in\R,$ $\theta\in S^1$ and $\theta^\perp\in S^1$ is (counterclockwise) perpendicular 
to $\theta$; see \cite{natterer1986mathematics}.
For our experiments, we use a discretization of the Radon transform created using the AIR tools software package \cite{hansen2012air}.
Regarding our method, we employed coupling of consecutive splitting variables and
the step-size parameter was set to $\lambda=0.11$.
To quantify the reconstruction quality we use the mean structural 
similarity index (MSSIM) \cite{wang2004image} which is bounded from above by 1, where
higher values indicate better results.

We compare the proposed method to 
filtered back projection (FBP) which is standard in practice
\cite{pan2009commercial}.
The FBP is computed using its Matlab implementation
with the standard Ram-Lak filter.
Furthermore, we compare with total variation (TV) regularization \cite{rudin1992nonlinear}
in the Lagrange form $\|Ru-f\|_2^2 + \mu \| \nabla u \|_1$ with parameter $\mu>0$.
Its implementation follows the Chambolle-Pock algorithm \cite{chambolle2011first}.
The corresponding parameter $\mu$ was tuned w.r.t.\ the MSSIM index.

In Figure~\ref{fig:radon} we show the reconstruction results 
for the Shepp-Logan phantom
from undersampled (25 angles) and noisy Radon data.
Standard FBP produces strong streak artifacts which are typical for angular undersampling,
and the reconstruction suffers from noise. 
The TV regularization and the proposed method both provide considerably improved reconstruction results. 
The proposed method achieves a higher MSSIM value than the TV reconstruction,
and it provides a reconstruction which is less grainy than the TV result.

\paragraph{Image partitioning.}
\begin{figure}[ht]
	\centering
	\captionsetup{justification=centering}
	\begin{subfigure}{0.32\textwidth}
		\includegraphics[width=\textwidth]{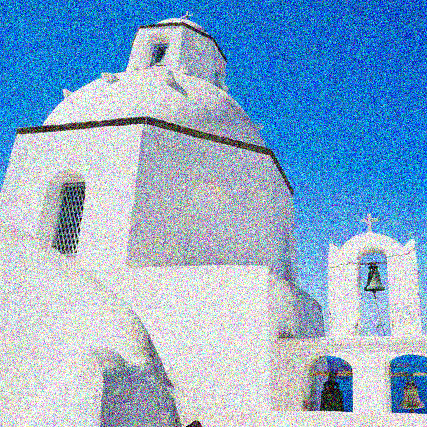}
		\caption{Noisy data}
	\end{subfigure}\hfill	
	\begin{subfigure}{0.32\textwidth}
		\includegraphics[width=\textwidth]{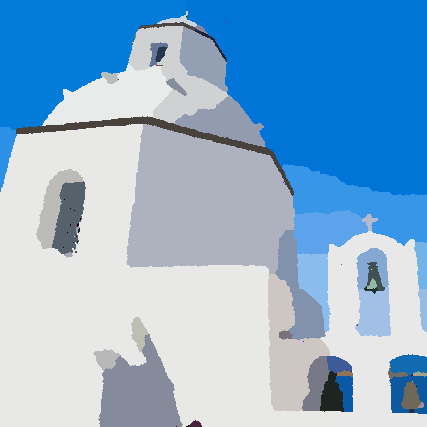}
		\caption{Proposed,
			$\gamma=\protect\input{Partitioning/gamma.tex}$
		}
	\end{subfigure}\hfill
	\begin{subfigure}{0.32\textwidth}
		\includegraphics[width=\textwidth]{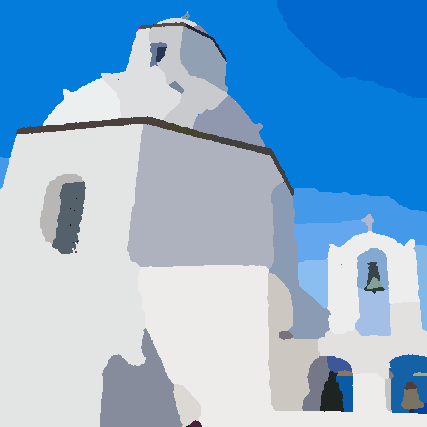}
		\caption{Graph cuts, \protect\input{Partitioning/GC_numLabels.tex}
			\unskip\,labels,
			$\gamma=\protect\input{Partitioning/gamma.tex}$
		}
	\end{subfigure}\\[0.5em]
	\begin{subfigure}{0.32\textwidth}
		\includegraphics[width=\textwidth]{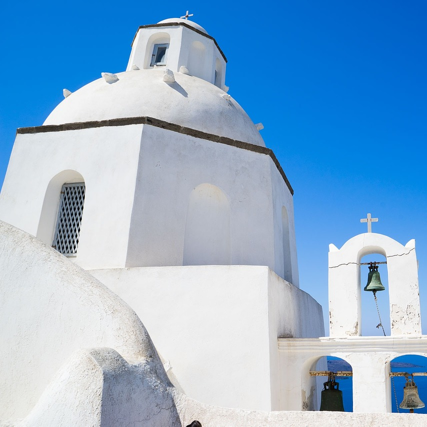}
		\caption{Original\\~}
	\end{subfigure}\hfill	
	\begin{subfigure}{0.32\textwidth}
		\includegraphics[width=\textwidth]{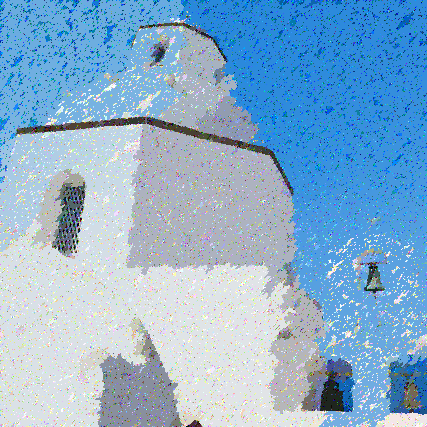}
		\caption{Ambrosio-Tortorelli,\\
			$\gamma=\protect\input{Partitioning/gamma_AT.tex}$,
			$\alpha=\protect\input{Partitioning/alpha_AT.tex}$,
			$\varepsilon=\protect\input{Partitioning/eps_AT.tex}$
		}
	\end{subfigure}\hfill
	\begin{subfigure}{0.32\textwidth}
		\includegraphics[width=\textwidth]{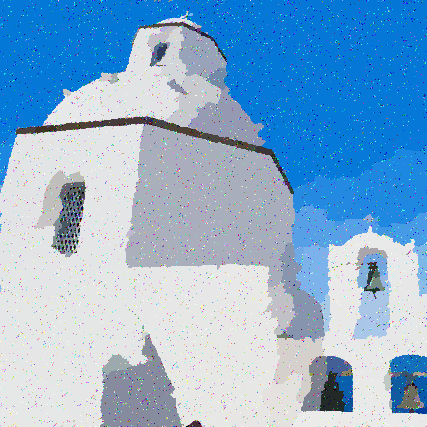}
		\caption{$L_0$ gradient smoothing \cite{xu2011image},
			$\kappa=\protect\input{Partitioning/kappa.tex}$,
			$\nu =\protect\input{Partitioning/lambda.tex}$			
		}
	\end{subfigure}	
	\captionsetup{justification=justified}
	\caption{\label{fig:segmentation}
		Comparison of partitionings of a natural image corrupted by Gaussian noise with
		$\sigma=\protect\input{Partitioning/sigma.tex}$.
		The Ambrosio-Tortorelli result is noisy and corrupted by clutter. 
		The $L_0$ gradient smoothing over-segments the large window on the
		left hand side, while details of the cross in the bottom right are smoothed out.
		The proposed result is visually competitive with the
		state-of-the-art graph cuts result.
	}
\end{figure}
Finally, we consider
the classical Potts problem
corresponding to $A=\mathrm{id}$ in \eqref{eq:pottsGeneralA}.
While the focus of the present paper is on 
a general imaging operator $A$, we next observe that it also works 
rather well for $A=\mathrm{id}$.
We used the full coupling scheme and set the step-size parameter to $\lambda = 0.55$.

To put our result in context we added the results of 
two other methods for $A=\mathrm{id}$:
the $L_0$ gradient smoothing
method of Xu et. al \cite{xu2011image} and
the state-of-the-art $\alpha$-expansion graph cut algorithm based on max-flow/min-cut 
of the
library GCOptimization 3.0 of Veksler and Delong
\cite{boykov2001fast,boykov2004experimental,kolmogorov2004energy}.
The method of \cite{xu2011image} has a parameter $\kappa>1$ to control
the convergence speed and a smoothing weight $\nu$.
In our experiments, we set $\kappa = 1.01$ and
$\nu=0.1$.
For the graph cuts
the same neighborhood weights
and jump penalty 
were used as for the proposed method.
The discrete labels are computed via k-means.

In Figure~\ref{fig:segmentation}, we show the results for a natural image corrupted by Gaussian noise.
The Ambrosio-Tortorelli result suffers from clutter and remains noisy.
The result of $L_0$ gradient smoothing over-segments
the textured window area while it smooths
out details of the cross.  
The state of the art graph cuts method and the proposed method both provide 
satisfying results which are
visually comparable.
Further, they yield solutions with comparable Potts energy values.
For instance, on the IVC dataset \cite{ivcdb} which consists
of $10$ natural color images of size $512\times 512,$
for the model parameters, $\gamma= 0.25$ and $\gamma=1,$
the mean values 
of the proposed approach are $7107.8$ and $13053.2$ compared with  	
the respective mean energies of the graph cut approach 
$7093.2$ and $13008.7$ which differ by about half a percent.
(For the results in Figure~\ref{fig:segmentation}, the energy value of the proposed approach is 25067.7
~\unskip compared with 
25119.5
~\unskip
for the graph cuts approach.)
Here, for the graph cut approach, we took the mean value of the input image on each computed segment before computing the Potts objective function.
To sum up, while the proposed method can handle general linear operators $A$,
the quality of the results for $A=\rm id$ is comparable 
with the state-of-the-art graph cut algorithm for $A=\rm id$.

\section{Conclusion}\label{sec:Conclusion}
In this paper, we have proposed a new iterative minimization strategy for multivariate piecewise constant Mumford-Shah/Potts energies as well as their quadratic penalty relaxations. 
Our schemes are based on majorization-minimization 
or forward-backward splitting methods of Douglas-Rachford type \cite{lions1979splitting}.
In contrast to the approaches in \cite{fornasier2010iterative,blumensath2008iterative,lu2012iterative, lu2013sparse} for sparsity problems
which lead to thresholding algorithms, our approach leads to non-separable yet computationally tractable problems in the backward step.

As a second part, we have provided a convergence analysis for the proposed algorithms. 
For the proposed quadratic penalty relaxation scheme, we have shown convergence towards a local minimizer. 
Due to the NP hardness of the quadratic penalty relaxation, the convergence result is in the range of what can be expected best. Concerning the scheme for the non-relaxed Potts problem we have also performed a convergence analysis. In particular, we have obtained results on the convergence towards local minimizers on subsequences. 
The quality of these results is comparable with the results of \cite{lu2012iterative, lu2013sparse} where, compared with these papers, 
we had to deal with the non-separability of the backward step as an additional challenge.
 
Finally, we have shown the applicability of our schemes in several experiments. 
We have applied our algorithms to deconvolution problems
including the problem of deblurring and denoising motion blur images.
We have further dealt with noisy and undersampled Radon data for the task of joint reconstruction, denoising and segmentation.
Finally, we have applied our approach to the situation of pure image partitioning (without blur)
which is a widely considered problem in computer vision.

\section*{Acknowledgements}
L. Kiefer and A. Weinmann were supported by the German Research Foundation (DFG) Grant WE5886/4-1. 
Additionally, A. Weinmann was supported by DFG Grant WE5886/3-1.
M. Storath was supported by DFG Grant STO1126/2-1.

{	\small
	\bibliographystyle{plain}
	\bibliography{IterativePottsMultivariate}
}

\end{document}